\definecolor{rouge}{rgb}{0.7,0.00,0.00}
\definecolor{vert}{rgb}{0.00,0.5,0.00}
\definecolor{bleu}{rgb}{0.00,0.00,0.8}
\newtheorem{theorem}{Theorem}[section]
\newtheorem*{theorem*}{Theorem}
\newtheorem{lemma}[theorem]{Lemma}
\newtheorem{definition}[theorem]{Definition}
\newtheorem{proposition}[theorem]{Proposition}
\newtheorem{condition}{Condition}
\newtheorem{conditionA}{A\kern-0.1mm}
\theoremstyle{definition}
\newtheorem{remark}[theorem]{Remark}
\def \eref#1{\hbox{(\ref{#1})}}
\numberwithin{equation}{section}
\def\geq{\geqslant}
\def\leq{\leqslant}
\def\RR{\mathbb{R}}
\def\PP{\mathbb{P}}
\def\EE{\mathbb{E}}
\def\vare{{\varepsilon}}
\def \eref#1{\hbox{(\ref{#1})}}
\def\EE{\mathbb{ E}}
\def\ep{{\varepsilon}}
\begin{document}

\title[Diffusion Approximation for Multi-Scale MVSDE]
{Diffusion Approximation for Multi-Scale McKean-Vlasov SDEs Through Different Methods}
\author{Wei Hong}
\curraddr[Hong, W.]{Center for Applied Mathematics, Tianjin University, Tianjin 300072, China}
\email{weihong@tju.edu.cn}

\author{Shihu Li}
\curraddr[Li, S.]{School of Mathematics and Statistics, Jiangsu Normal University, Xuzhou 221116, China}
\email{shihuli@jsnu.edu.cn}

\author{Xiaobin Sun}
\curraddr[Sun, X.]{ School of Mathematics and Statistics, Jiangsu Normal University, Xuzhou 221116, China }
\email{xbsun@jsnu.edu.cn}

\begin{abstract}
In this paper, we aim to study the diffusion approximation for multi-scale McKean-Vlasov stochastic differential equations. More precisely, we prove the weak convergence of slow process $X^\varepsilon$  in $C([0,T];\RR^n)$ towards the limiting process $X$ that is the solution of a distribution dependent stochastic differential equation in which some new drift and diffusion terms compared to the original equation appear. The main contribution is to use two different methods
to explicitly characterize the limiting equations respectively. The obtained diffusion coefficients in the limiting equations have different form through these two methods, however it will be asserted that they are essential the same by a comparison.

\end{abstract}

\date{\today}
\subjclass[2000]{ Primary 60H10; Secondary 60F05,~60F10}
\keywords{Diffusion approximation; McKean-Vlasov equation; Multi-scale; Martingale problem approach; Martingale representation theorem}

\maketitle

\tableofcontents 
\section{Introduction}

The diffusion approximation problem mainly concerns the convergence of singularly perturbed ordinary differential equations with random inputs, which involves a homogenization term, tends to   stochastic differential equations (SDEs for short). To our knowledge, the diffusion approximation for multi-scale stochastic differential equations was first studied by Papanicolaou, Stroock and Varadhan \cite{PSV} on a compact state space.
A simple example is the following
\begin{eqnarray}
d X^{\varepsilon}_t =b\left(X^{\varepsilon}_t, X^{\varepsilon}_{t}/\sqrt{\vare}\right)dt+\frac{1}{\sqrt{\varepsilon}}K\left(X^{\varepsilon}_t, X^{\varepsilon}_{t}/\sqrt{\vare}\right)dt+\sigma\left(X^{\varepsilon}_t, X^{\varepsilon}_{t}/\sqrt{\vare}\right)d W_t,~ X^{\varepsilon}_0=x\in\RR^n,\label{simple Ex}
\end{eqnarray}
where  the small enough parameter $\varepsilon>0$, and $\frac{1}{\sqrt{\varepsilon}}K\left(X^{\varepsilon}_t, X^{\varepsilon}_{t}/\sqrt{\vare}\right)$ is the homogenization term which has its own interest in the theory of partial differential equations (cf.~\cite{HP08,HP1}) and is also very useful in many physical systems (cf.~\cite{MR83,Ne67,PS08}).
Denote $\mu^{\vare}$ by the distribution of solution $X^{\varepsilon}$ in $C([0,\infty);\RR^n)$. Under some proper conditions on the coefficients, it was proved that
$$\mu^{\vare}\rightarrow \mu~~\text{weakly},~\text{as}~ \vare\rightarrow 0,$$
where  $\mu$ is the distribution of the limiting process (see \cite[Theorem 6.1]{BLP}). In the diffusion approximation theory, due to the appearance of homogenization term in \eref{simple Ex}, an extra diffusion term will appear in the limiting equation typically.

Note that if we denote $Y^{\varepsilon}_{t}=X^{\varepsilon}_{t}/\sqrt{\vare}$, then  \eref{simple Ex} is equivalent to the following coupled stochastic system
\begin{equation*}\left\{\begin{array}{l}\label{Example}
\displaystyle
d X^{\varepsilon}_t = b\left(X^{\varepsilon}_t, Y^{\varepsilon}_{t}\right)dt+\frac{1}{\sqrt{\varepsilon}}K\left(X^{\varepsilon}_t, Y^{\varepsilon}_{t}\right)dt+\sigma\left(X^{\varepsilon}_t, Y^{\varepsilon}_{t}\right)d W_t, \\
\displaystyle d Y^{\varepsilon}_t =\frac{1}{\varepsilon}K\left(X^{\varepsilon}_t, Y^{\varepsilon}_{t}\right)dt+\frac{1}{\sqrt{\vare}}b\left(X^{\varepsilon}_t, Y^{\varepsilon}_{t}\right)dt+\frac{1}{\sqrt{\vare}}\sigma\left(X^{\varepsilon}_t, Y^{\varepsilon}_{t}\right)d W_t,\\
\displaystyle X^{\varepsilon}_0=x,~Y^{\varepsilon}_0=x/\sqrt{\vare}.
\end{array}\right.
\end{equation*}
Hence, the classical diffusion approximation problem can be reduced to a more general type
\begin{equation}\left\{\begin{array}{l}\label{eq40}
\displaystyle
d X^{\varepsilon}_t = b(X^{\varepsilon}_t, Y^{\varepsilon}_t)dt+\frac{1}{\sqrt{\varepsilon}}K(X^{\varepsilon}_t, Y^{\varepsilon}_t)dt+\sigma(X^{\varepsilon}_t, Y^{\varepsilon}_t)d W_t, \\
\displaystyle d Y^{\varepsilon}_t =\frac{1}{\varepsilon}f(X^{\varepsilon}_t, Y^{\varepsilon}_t)dt+\frac{1}{\sqrt{\varepsilon}}h( X^{\varepsilon}_t, Y^{\varepsilon}_t)d t+\frac{1}{\sqrt{\varepsilon}}g( X^{\varepsilon}_t, Y^{\varepsilon}_t)d W_t,\\
\displaystyle X^{\varepsilon}_0=x,~Y^{\varepsilon}_0=y.
\end{array}\right.
\end{equation}
When $h\equiv0$, Pardoux and Veretennikov \cite{PV1,PV2} have studied the asymptotic behavior of the solution $X^{\vare}$ to (\ref{eq40}) in $C([0,\infty);\RR^n)$, the main technique is based on the auxiliary Poisson equation and applying the martingale problem approach to characterize the limiting process. Recently, R\"{o}ckner and Xie \cite{RX21} developed the regularity of Poisson equation and established a more general result of diffusion approximation for the multi-scale SDEs. For more results on this subject, we refer to \cite{BK04,BS,FW2021, GM, KY05,LWX,XY21,WR15,YW19}.

In this paper, we are interested in the theory of diffusion approximation for a class of multi-scale McKean-Vlasov SDEs.
The McKean-Vlasov SDEs in the form of
\begin{equation}\label{eq01}
dX_t=b(X_t,\mathscr{L}_{X_t})dt+\sigma(X_t,\mathscr{L}_{X_t})dW_t,
\end{equation}
where $\mathscr{L}_{X_t}$ stands for the distribution of $X_t$, have attracted considerable attention in recent years.
This type of models arose in \cite{M1} by McKean, which could be view as
the limit of $N$-interacting particle systems in a mean-field way (called the {\it propagation of chaos}) while $N$ goes to
infinity, we refer the reader to the lecture notes \cite{S91}. Another important application of McKean-Vlasov SDEs is it can be used to characterize the nonlinear Fokker-Planck-Kolmogorov (FPK for short) equations (cf.~\cite{S91,WFY}). More specifically, the distribution density (denoted by $\rho_t$) of solution to (\ref{eq01})  solves the following nonlinear PDE
\begin{equation*}
\partial_t\rho_t=L^*\rho_t,~~t\geq 0,
\end{equation*}
where $L$ is a second order nonlinear operator and $L^*$ denotes its adjoint operator. In particular, it can be applied to solve the homogenous Landau equations,~i.e.,
\begin{eqnarray*}
\partial_tf_t=\frac{1}{2}{\rm{div}}\Big\{\int_{\RR^d}a(\cdot-z)(f_t(z)\nabla f_t-f_t\nabla f_t(z))dz\Big\},
\end{eqnarray*}
for some reference coefficients $a:\RR^n\rightarrow\RR^n\otimes\RR^n$. It is well-known that the Landau equation can be seen as an approximation of the Boltzmann equation in the asymptotic of grazing collisions. In the classical paper \cite{FU}, Funaki investigated the diffusion approximation problem of the homogenous Boltzmann equation with soft potentials and showed that  the Boltzmann martingale problem towards to the Landau martingale problem as the scale parameter $\varepsilon\to 0$, see also \cite{FU1} for the case of the Boltzmann equation of Maxwellian molecules.

From the standpoint of McKean-Vlasov SDEs, we consider the following multi-scale McKean-Vlasov stochastic systems
\begin{equation}\left\{\begin{array}{l}\label{eq4}
\displaystyle
d X^{\varepsilon}_t = b(X^{\varepsilon}_t, \mathscr{L}_{X^{\varepsilon}_t}, Y^{\varepsilon}_t)dt+\frac{1}{\sqrt{\varepsilon}}K(X^{\varepsilon}_t, \mathscr{L}_{X^{\varepsilon}_t}, Y^{\varepsilon}_t)dt+\sigma(X^{\varepsilon}_t, \mathscr{L}_{X^{\varepsilon}_t},Y^{\varepsilon}_t)d W_t, \\
\displaystyle d Y^{\varepsilon}_t =\frac{1}{\varepsilon}f(X^{\varepsilon}_t, \mathscr{L}_{X^{\varepsilon}_t}, Y^{\varepsilon}_t)dt+\frac{1}{\sqrt{\varepsilon}}h( X^{\varepsilon}_t, \mathscr{L}_{X^{\varepsilon}_t}, Y^{\varepsilon}_t)d t+\frac{1}{\sqrt{\varepsilon}}g( X^{\varepsilon}_t, \mathscr{L}_{X^{\varepsilon}_t}, Y^{\varepsilon}_t)d W_t,\\
\displaystyle X^{\varepsilon}_0=\xi,~Y^{\varepsilon}_0=\zeta,
\end{array}\right.
\end{equation}
where $\{W_t\}_{t\geq 0}$ is a $d$-dimensional standard Brownian motion on a complete filtration probability space $(\Omega, \mathscr{F}, \{\mathscr{F}_{t}\}_{t\geq0}, \mathbb{P})$, $\xi$ and $\zeta$ are $\mathscr{F}_0$-measurable $\RR^n$ and $\RR^m$-valued random variables respectively, $\varepsilon$ is a small positive parameter describing the ratio of the time-scale between the slow component $X^{\varepsilon}_t$ and fast component $Y^{\varepsilon}_t$. Multi-scale systems are ubiquitous in many fields of sciences and engineering such as chemistry, fluids dynamics and climate dynamics,   the reader can see \cite{An00,PS08} and the reference therein for more precise background and applications.

Since the widely separated time-scales and the cross interactions of slow and fast modes, it is often difficult to study the multi-scale system directly. Hence the asymptotic behavior (specifically, the averaging principle) of the system and a simplified equation which governs the evolution of the system for small $\varepsilon$ are widely studied in the literature. Very recently, the averaging principle of multi-scale McKean-Vlasov stochastic systems
\begin{equation*}\left\{\begin{array}{l}\label{E01}
\displaystyle
d X^{\varepsilon}_t = b(X^{\varepsilon}_t, \mathscr{L}_{X^{\varepsilon}_t}, Y^{\varepsilon}_t)dt+\sigma(X^{\varepsilon}_t, \mathscr{L}_{X^{\varepsilon}_t},Y^{\varepsilon}_t)d W^{1}_t, \\
\displaystyle d Y^{\varepsilon}_t =\frac{1}{\varepsilon}f(X^{\varepsilon}_t, \mathscr{L}_{X^{\varepsilon}_t}, Y^{\varepsilon}_t)dt+\frac{1}{\sqrt{\varepsilon}}g( X^{\varepsilon}_t, \mathscr{L}_{X^{\varepsilon}_t}, Y^{\varepsilon}_t)d W^{2}_t,
\end{array}\right.
\end{equation*}
where $\{W^{1}_t\}_{t\geq 0}$ and $\{W^{2}_t\}_{t\geq 0}$ are mutually independent $d_1$ and $d_2$-dimensional Brownian motions, was established by  R\"{o}ckner et al.~\cite{RSX1}, which
can be seen as the classical functional law of large numbers. For more averaging principle results, we refer the reader to e.g.~\cite{BM,C1,GP2,HLL2,K1,PIX,SXX} and reference therein for the classical SDEs or SPDEs (i.e.~distribution independent case) and to \cite{HLL4,SXW,XLLM} for the case of distribution dependence.

As the continuation of \cite{RSX1},  we are going one step further in the present paper. Namely, we will investigate the theory of diffusion approximation, which is closely relate to the averaging principle, for more general type of multi-scale McKean-Vlasov stochastic systems (\ref{eq4}) compared to \cite{RSX1}. As a powerful tool of characterizing the limiting process, some regularities on the Wasserstein space of solution to the auxiliary Poisson equation depending on  parameter measures are derived.  
Furthermore, it is worth  pointing out an independent interest of the paper. More precisely, we shall use two different methods to characterize the limiting process. One is the {\it martingale problem approach}, and the other is {\it martingale characteriziation}. As known to all, the former method is used frequently to characterize the limiting process is the solution of corresponding martingale problem (see \cite{PV1,PV2}), however it seems nontrivial to characterize the explicit form of limiting equation, which could be completed in a straightforward way by our second method.
 Here we give the details and present our strategy for the readers briefly.

 Firstly, we prove that $X^{\varepsilon}$ is tight in $C([0,T],\RR^n)$, then there exists a subsequence  of any sequence $\{\varepsilon_k\}_{k\geq1}$, which we keep denoting by $\{\varepsilon_{k}\}_{k\geq1}$, tending to $0$ such that $X^{\varepsilon_{k}}$ converges weakly to the limit denoted by $X$  in $C([0,T];\RR^n)$. Secondly, we identify the weak limiting process of sequence $\{X^{\varepsilon_{k}}\}_{k\geq 1}$ as $k \to \infty$.
To do this, we denote by $\Phi$ the solution of the following Poisson equation with measure dependence, i.e.,
$$
-\mathscr{L}_{2}(x,\mu)\Phi(x,\mu,y)=K(x,\mu,y),
$$
where the operator $\mathscr{L}_{2}(x,\mu)$ is generator of the frozen equation (see (\ref{FEQ2}) below) corresponding to the fast component of (\ref{eq4}). We shall use two mentioned methods above to present our main results.
 \begin{itemize}
\item{
\textbf{Martingale problem approach:} The limiting process is the solution of the
martingale problem associated to the operator $L_{\mu}$ given by
$$L_{\mu}:=\sum^n_{i=1}\Theta_i(x,\mu)\partial_{x_i}+\frac{1}{2}\sum^{n}_{i=1}\sum^n_{j=1} (\Sigma\Sigma^{\ast})_{ij}(x,\mu)\partial_{x_i}\partial_{x_j},$$
where
\begin{eqnarray*}
&&\Theta(x,\mu):=\overline{b+\partial_x \Phi_K+\partial_y \Phi_h+\text{Tr}\left[\partial^2_{xy}\Phi_{\sigma g^{*}}\right]}(x,\mu),\\
&&\Sigma(x,\mu):= \Big(\overline{(K\otimes\Phi)+(K\otimes\Phi)^{\ast}+(\sigma g^{*})\partial_{y}\Phi+[(\sigma g^{*})\partial_{y}\Phi]^{\ast}+(\sigma\sigma^{\ast})}\Big)^{\frac{1}{2}}(x,\mu).
\end{eqnarray*}
Then such limiting process is also the weak solution of following equation
\begin{eqnarray*}
dX_t= \!\!\!\!\!\!\!\!&& \Theta(X_t,\mathscr{L}_{X_t})dt +\Sigma(X_t,\mathscr{L}_{X_t})d\hat{W}_t,\quad X_0=\hat{\xi},~~~
\end{eqnarray*}
where initial value $\hat{\xi}$ coincides in law with $\xi$, $\hat{W}_t$ is a $n$-dimensional standard Brownian motion.

}
\end{itemize}

Note that there is a gap about the diffusion coefficient $\Sigma$, that is, it is unclear whether the term
$$\overline{(K\otimes\Phi)+(K\otimes\Phi)^{\ast}+(\sigma g^{*})\partial_{y}\Phi+[(\sigma g^{*})\partial_{y}\Phi]^{\ast}+(\sigma\sigma^{\ast})}$$ is positive semi-definite so that the square root makes sense. In order to fill this gap, we use the following second method to characterize the limiting process.
\begin{itemize}
\item{
\textbf{Martingale characterization:} The limiting process is the weak solution of following equation
\begin{eqnarray*}
dX_t= \!\!\!\!\!\!\!\!&&\Theta(X_t,\mathscr{L}_{X_t})dt +\tilde{\Sigma}(X_t,\mathscr{L}_{X_t})d\tilde{W}_t,\quad X_0=\tilde{\xi},~~~
\end{eqnarray*}
where initial value $\tilde{\xi}$ coincides in law with $\xi$, $\tilde{W}_t$ is a $n$-dimensional standard Brownian motion and
\begin{eqnarray*}
&&\Theta(x,\mu):=\overline{b+\partial_x \Phi_K+\partial_y \Phi_h+\text{Tr}\left[\partial^2_{xy}\Phi_{\sigma g^{*}}\right]}(x,\mu),\\
&&\tilde{\Sigma}(x,\mu):= \Big(\overline{(\partial_y\Phi_g+\sigma)(\partial_y\Phi_g+\sigma)^*}\Big)^{\frac{1}{2}}(x,\mu).
\end{eqnarray*}
}
\end{itemize}
The meaning of these notations in $\Theta, \Sigma, \tilde{\Sigma}$ can be founded in section 2.

Note that, by using these two methods, the obtained drift coefficients in the limiting equation have the same form but the diffusion coefficients are different, however it will be asserted that they are essential the same, see Remark \ref{re0} for the detailed explanations.

This manuscript is organized as follows. In section 2, we first introduce some notations and assumptions, then we state our main results more details through by two different methods, see Theorem \ref{main result 1} and Theorem \ref{main result 2} respectively. In section 3, we give some a priori estimates of the solution $(X^{\vare}, Y^{\vare})$ and study the regularity of the solution of Poisson equation. Finally,  the detailed proof of Theorem \ref{main result 1} is given in subsection 4.1, and the detailed proof of Theorem \ref{main result 2} is given in subsection 4.2. Note that throughout this paper $C$ and $C_T$  denote positive constants which may change from line to line, where the subscript $T$ is used to emphasize that the constant depends on certain parameter.

\section{Framework and main results}\label{sec.prelim}

We first recall some notations that will be frequently used throughout the present paper. We denote by $|\cdot|$ the Euclidean vector norm and by $\langle\cdot, \cdot\rangle$ the usual Euclidean inner product. Let $\|\cdot\|$ be the matrix norm or the operator norm if there is no confusion possible. For a vector-valued or matrix-valued function $u(x,y)$ defined on $\RR^n\times\RR^m$, for any $v,q\in \{x,y\}$, we use $\partial_v u$ to denote the first order partial derivative of $u$ with respect to (w.r.t.) the component $v$, and $\partial^2_{v q} u$ to denote its mixed second order derivatives of $u$ w.r.t. the components $v$ and $q$.

\vspace{0.1cm}
Let $\mathscr{P}(\RR^n)$ be the set of all probability measures on $(\RR^n, \mathscr{B}(\RR^n))$ and $\mathscr{P}_2(\RR^n)$ be
$$
\mathscr{P}_2(\RR^n):=\Big\{\mu\in \mathscr{P}(\RR^n): \mu(|\cdot|^2):=\int_{\RR^n}|x|^2\mu(dx)<\infty\Big\},
$$
 then $\mathscr{P}_2(\RR^n)$ is a Polish space under the Wasserstein distance
$$
\mathbb{W}_{2}(\mu_1,\mu_2):=\inf_{\pi\in \mathscr{C}_{\mu_1,\mu_2}}\left[\int_{\RR^n\times \RR^n}|x-y|^2\pi(dx,dy)\right]^{1/2},~ \mu_1,\mu_2\in\mathscr{P}_2(\RR^n),
$$
where $\mathscr{C}_{\mu_1,\mu_2}$ is the set of all couplings for $\mu_1$ and $\mu_2$.

In the sequel, we recall the notion of Lions derivative on Wasserstein space. For any $u: \mathscr{P}_2(\RR^n)\rightarrow \RR$, we denote by $U$ its "extension" to $L^2(\Omega, \PP;\RR^n)$, which is defined by
$$
U(X)=u(\mathscr{L}_{X}),\quad X\in L^2(\Omega,\PP;\RR^n).
$$
We say  $u$ is differentiable at $\mu\in\mathscr{P}_2(\RR^n)$ if there exists $X\in L^2(\Omega,\PP;\RR^n)$ such that $\mathscr{L}_{X}=\mu$ and $U$ is Fr\'echet differentiable at $X$. By the Riesz representation theorem, the Fr\'echet derivative $DU(X)$, which can be seen as an element of $L^2(\Omega,\PP;\RR^n)$, can be represented by
$$
DU(X)=\partial_{\mu}u(\mathscr{L}_{X})(X),
$$
where $\partial_{\mu}u(\mathscr{L}_{X}):\RR^n\rightarrow \RR^n$ is called the Lions derivative of $u$ at $\mu= \mathscr{L}_{X}$. Moreover, $\partial_{\mu}u(\mu)\in L^2(\mu;\RR^n)$ for $\mu\in\mathscr{P}_2(\RR^n)$. In addition, if $\partial_{\mu}u(\mu)(\cdot):\RR^n\rightarrow \RR^n$ is differentiable, we denote its derivative by $\partial_{z}\partial_{\mu}u(\mu)(\cdot):\RR^n\rightarrow \RR^n\times\RR^n$. In addition,
we say a vector-valued or matrix-valued function $u(\mu)=(u_{ij}(\mu))$ is differentiable at $\mu\in\mathscr{P}_2(\RR^n)$, if all its components are  differentiable at $\mu$, and set
$$ \partial_{\mu}u(\mu)=(\partial_{\mu}u_{ij}(\mu)), ~~   \|\partial_{\mu}u(\mu)\|^2_{L^2(\mu)}=\sum_{i,j}\int_{\RR^n}|\partial_{\mu}u_{ij}(\mu)(z)|^2\mu(dz). $$
 Similarly, we call that $\partial_{\mu}u(\mu)(\cdot)$ is differentiable if all its components are differentiable, and set $$\partial_{z}\partial_{\mu}u(\mu)(z)=(\partial_{z}\partial_{\mu}u_{ij}(\mu)(z)), ~~ \|\partial_{z}\partial_{\mu}u(\mu)(\cdot)\|^2_{L^2(\mu)}=\sum_{i,j}\int_{\RR^n}\|\partial_{z}\partial_{\mu}u_{ij}(\mu)(z)\|^2\mu(dz).$$

\vspace{0.1cm}
For sake of simplicity, we recall the following definitions.

 \begin{definition} For a  map $u(\cdot): \mathscr{P}_2(\RR^n) \to \RR$, we say $u\in C^{(1,1)}_b(\mathscr{P}_2(\RR^n); \RR)$, if this map is continuously differentiable at any $\mu\in\mathscr{P}_2(\RR^n)$ and its derivative $\partial_{\mu}u(\mu)(z):\RR^n\rightarrow \RR^n$ is continuously differentiable at any $z\in\RR^n$, moreover, the derivatives $\partial_{\mu}u(\mu)(z)$ and $\partial_z\partial_{\mu}u(\mu)(z)$ are jointly continuous at any $(\mu,z)$, and uniformly bounded,~i.e. $\sup_{\mu\in\mathscr{P}_2(\RR^n),z\in\RR^n}\|\partial_{\mu}u(\mu)(z)\|<\infty$ and $\sup_{\mu\in\mathscr{P}_2,z\in\RR^n}\|\partial_z\partial_{\mu}u(\mu)(z)\|<\infty$. For a vector or matrix-valued map $u(\cdot): \mathscr{P}_2(\RR^n) \to \mathbb{K}$, where $\mathbb{K}$ is a vector or matrix space, we say $u\in C^{(1,1)}_b(\mathscr{P}_2(\RR^n);\mathbb{K})$ if all the components belong to $C^{(1,1)}_b(\mathscr{P}_2(\RR^n);\RR)$.
\end{definition}

\begin{definition} We say $u\in C^{(2,1)}_b(\mathscr{P}_2(\RR^n); \RR)$, if $u\in C^{(1,1)}_b(\mathscr{P}_2(\RR^n); \RR)$ and its Lions derivative of order two $\partial_{\mu}\partial_{\mu}u(\mu)(z)(z')$ exists and is jointly continuous at any $(\mu,z,z')$ and uniformly bounded in $L^2(\mu)$-sense,~i.e. $\sup_{\mu\in\mathscr{P}_2(\RR^n),z\in\RR^n}\|\partial_{\mu}\partial_{\mu}u(\mu)(z)\|_{L^2(\mu)}<\infty$. For a vector or matrix-valued map we say $u\in C^{(2,1)}_b(\mathscr{P}_2(\RR^n);\mathbb{K})$ if all the components belong to $C^{(2,1)}_b(\mathscr{P}_2(\RR^n);\RR)$.
\end{definition}

\begin{definition}  For a  map $u(\cdot,\cdot): \RR^n\times\RR^m \to \RR$, we say $u\in C^{3,3}_b(\RR^n\times\RR^m; \RR)$, if all the $k$-order (mixed) partial derivatives of $u$ are jointly continuous and uniformly bounded w.r.t.~$(x,y)\in\RR^n\times\RR^m$, for $1\leq k\leq 3$. For a vector or matrix-valued map $u(\cdot,\cdot): \RR^n\times\RR^m \to \mathbb{K}$, we say $u\in C^{3,3}_b(\RR^n\times\RR^m;\mathbb{K})$ if all the components belong to $C^{3,3}_b(\RR^n\times\RR^m;\RR)$.
\end{definition}

\begin{definition} For a vector or matrix-valued map $u(\cdot,\cdot,\cdot): \RR^n\times\mathscr{P}_2(\RR^n) \times\RR^m \to \mathbb{K}$, we say $u\in C^{3,(2,1),3}_b(\RR^n\times\mathscr{P}_2(\RR^n)\times\RR^m;\mathbb{K})$ if $u(x,\cdot,y)\in C^{(2,1)}_b(\mathscr{P}_2(\RR^n);\mathbb{K})$ for any $(x,y)\in\RR^n\times \RR^m$ and $u(\cdot,\mu,\cdot)\in C^{3,3}_b(\RR^n\times\RR^m;\mathbb{K})$ for any $\mu\in\mathscr{P}_2(\RR^n)$, moreover, all the $k$-order (mixed) partial partial derivatives of $u$ w.r.t.~$(x,y)$, for $1\leq k\leq 3$, are uniformly bounded w.r.t.~$(x,\mu,y)\in\RR^n\times\mathscr{P}_2(\RR^n) \times\RR^m$, and $\partial_{\mu}u(x,\mu,y)(z)$, $\partial_z\partial_{\mu}u(x,\mu,y)(z)$ and $\|\partial_{\mu}\partial_{\mu}u(x,\mu,y)(z)\|_{L^2(\mu)}$ are uniformly bounded  w.r.t.~$(x,\mu,y,z)\in\RR^n\times\mathscr{P}_2(\RR^n) \times\RR^m\times \RR^n$.
\end{definition}

\vspace{0.1cm}
Suppose that the coefficients
\begin{eqnarray*}
&&b: \RR^n\times\mathscr{P}_2(\RR^n)\times\RR^m \rightarrow \RR^{n};\\
&&K:\RR^n\times\mathscr{P}_2(\RR^n)\times\RR^m \rightarrow \RR^{n};\\
&& \sigma: \RR^n\times \mathscr{P}_2(\RR^n)\times\RR^m\rightarrow \RR^{n}\otimes\RR^{d};\\
&&f:\RR^n\times\mathscr{P}_2(\RR^n)\times\RR^m\rightarrow \RR^{m};\\
&&h:\RR^n\times\mathscr{P}_2(\RR^n)\times\RR^m\rightarrow \RR^{m};\\
&&g:\RR^n\times\mathscr{P}_2(\RR^n)\times\RR^m\rightarrow \RR^{m}\otimes\RR^{d}
\end{eqnarray*}
satisfy the following assumptions.


\smallskip
\noindent
\begin{conditionA}\label{A1} There exist constants $C$ and $\gamma>0$ such that for all $x_1,x_2\in\RR^n$, $\mu_1,\mu_2\in \mathscr{P}_2(\RR^n)$ and $y_1,y_2\in\RR^m$,
\begin{eqnarray}
&&|b(x_1, \mu_1, y_1)-b(x_2, \mu_2, y_2)|+|K(x_1, \mu_1, y_1)-K(x_2, \mu_2, y_2)|\nonumber\\
&&+\|\sigma(x_1,\mu_1,y_1)-\sigma(x_2,\mu_2, y_2)\|\leq C\big[|x_1-x_2|+|y_1-y_2|+\mathbb{W}_{2}(\mu_1, \mu_2)\big]; \label{A11}
\end{eqnarray}
\begin{eqnarray}
\!\!\!\!\!\!&&|f(x_1,\mu_1, y_1)-f(x_2, \mu_2,y_2)|+|h(x_1, \mu_1,y_1)-h(x_2, \mu_2, y_2)|\nonumber\\
\!\!\!\!\!\!&&~~~+\|g(x_1, \mu_1,y_1)-g(x_2, \mu_2, y_2)\|\leq C\big[|x_1-x_2|+|y_1-y_2|+\mathbb{W}_{2}(\mu_1, \mu_2)\big];\label{A21}
\end{eqnarray}
%
\begin{eqnarray}
2\langle f(x,\mu, y_1)-f(x, \mu,y_2), y_1-y_2\rangle
+\|g(x, \mu,y_1)-g(x,\mu, y_2)\|^2
\leq -\gamma|y_1-y_2|^2.\label{sm}
\end{eqnarray}

\end{conditionA}

\smallskip
\noindent
\begin{conditionA}\label{A3}
There exist constants $C$ and $\varsigma\in(0,1)$ such that for all $y\in\RR^m$,
\begin{eqnarray}
&&\sup_{x\in\RR^n,\mu\in\mathscr{P}_2(\RR^n)}\big[|K(x,\mu,y)|\!+|f(x,\mu,y)|+\!|h(x,\mu,y)|+\!\|\sigma(x,\mu,y)\|\big]\leq \!C(1+|y|),\label{Bound Condition}\\
&&\sup_{x\in\RR^n,\mu\in\mathscr{P}_2(\RR^n)}\|g(x, \mu,y)\|\leq C(1+|y|^\varsigma).\label{A6}
\end{eqnarray}
\end{conditionA}

\smallskip
\noindent
\begin{conditionA}\label{A2}
Suppose that $K\in C_b^{3,(2,1),3}(\RR^n\times\mathscr{P}_2(\RR^n)\times\RR^m;\RR^n)$,  $f\in C^{3,(2,1),3}_b(\RR^n\times \mathscr{P}_2(\RR^n)\times\RR^m;\RR^m)$ and $g\in C^{3,(2,1),3}_b(\RR^n\times \mathscr{P}_2(\RR^n)\times\RR^m;\RR^{m}\otimes\RR^{d})$. Moreover, there exist constants $C>0$  and $\gamma_1\in (0,1]$ such that for all $y_1,y_2\in\RR^m$,
\begin{eqnarray*}
\sup_{x,z\in\RR^n,\mu\in\mathscr{P}_2(\RR^n)}\|\partial_{z}\partial_{\mu} F(x, \mu, y_1)(z)-\partial_{z}\partial_{\mu} F(x, \mu, y_2)(z)\|\leq C|y_1-y_2|^{\gamma_1} , \label{A44}
\end{eqnarray*}
\begin{eqnarray*}
\sup_{x\in\RR^n,\mu\in\mathscr{P}_2(\RR^n)}\|\partial_{\mu}\partial_{y} F(x, \mu, y_1)-\partial_{\mu}\partial_{y} F(x, \mu, y_2)\|_{L^2(\mu)}\leq C|y_1-y_2|^{\gamma_1} ,
\end{eqnarray*}
\begin{eqnarray*}
\sup_{x,z\in\RR^n,\mu\in\mathscr{P}_2(\RR^n)}\|\partial_{y}\partial_{\mu} F(x, \mu, y_1)(z)-\partial_{y}\partial_{\mu} F(x, \mu, y_2)(z)\|\leq C|y_1-y_2|^{\gamma_1} ,
\end{eqnarray*}
\begin{eqnarray*}
\sup_{x,z\in\RR^n,\mu\in\mathscr{P}_2(\RR^n)}\|\partial_{\mu}\partial_{\mu} F(x, \mu, y_1)(z)-\partial_{\mu}\partial_{\mu} F(x, \mu, y_2)(z)\|_{L^2(\mu)}\leq C|y_1-y_2|^{\gamma_1} ,
\end{eqnarray*}
\begin{eqnarray*}
\!\!\!\!\!\!\!\!\!\!\!\!\!\!\!\!&&\sup_{x,z\in\RR^n,y\in\RR^m,\mu\in\mathscr{P}_2(\RR^n)}\big[\|\partial_{\mu}\partial_y F(x,\mu,y)\|_{L^2(\mu)}+\|\partial_{y}\partial_{\mu} F(x,\mu,y)(z)\|
\nonumber\\
\!\!\!\!\!\!\!\!\!\!\!\!\!\!\!\!&&~~~~~~~~~~~~~~~~~~~~~~~~~~
+\|\partial_{\mu}\partial^2_{yx} F(x,\mu,y)\|_{L^2(\mu)}+\|\partial_{\mu}\partial^2_{yy} F(x,\mu,y)\|_{L^2(\mu)}\big]\leq C,
\end{eqnarray*}
where $F$ represents $K, f,g$ respectively.
\end{conditionA}

Note that condition \eref{sm} is a standard dissipative condition that guarantees the existence and uniqueness of invariant measures (denoted by $\nu^{x,\mu}$) to the frozen equation (\ref{FEQ2}) below, see subsection \ref{sec3.11} for details. Based on this, we further assume the following centering condition for the coefficient $K$.
\smallskip
\noindent
\begin{conditionA}\label{A4}
Suppose that $K$ satisfies the centering condition, i.e.,
$$\int_{\RR^m}K(x,\mu,y)\nu^{x,\mu}(dy)=0.$$
\end{conditionA}

Let us give some comments for the conditions $\ref{A1}$-$\ref{A4}$ for reader's understanding.

\begin{remark}\label{R2.1}
(i) If \eref{A11} and \eref{A21} in condition $\ref{A1}$ hold, the system (\ref{eq4}) admits a unique  solution $(X_t^\varepsilon,Y_t^\varepsilon)$ (cf.~\cite{RSX1}), i.e., for any $\varepsilon>0$, $\mathscr{F}_0$-measurable initial values $\xi\in L^2(\Omega;\RR^n), \zeta\in L^2(\Omega;\RR^m)$, there exists a unique solution $\{(X^{\varepsilon}_t,Y^{\varepsilon}_t)\}_{t\geq 0}$ to system \eref{eq4} such that for any $T>0$,
$(X^{\varepsilon},Y^{\varepsilon})\in C([0,T]; \RR^n)\times C([0,T]; \RR^m), \PP$-a.s.,
\begin{equation*}\left\{\begin{array}{l}\label{mild solution}
\displaystyle
X^{\varepsilon}_t=\xi+\int^t_0b(X^{\varepsilon}_s, \mathscr{L}_{X^{\varepsilon}_s}, Y^{\varepsilon}_s)ds+\frac{1}{\sqrt{\varepsilon}}\int^t_0 K(X^{\varepsilon}_s, \mathscr{L}_{X^{\varepsilon}_s}, Y^{\varepsilon}_s)ds +\int^t_0\sigma(X^{\varepsilon}_s,\mathscr{L}_{X^{\varepsilon}_s},Y^{\varepsilon}_s)d W_s,\vspace{2mm}\\
\displaystyle
Y^{\varepsilon}_t=\zeta+\frac{1}{\varepsilon} \int^t_0f(X^{\varepsilon}_s,\mathscr{L}_{X^{\varepsilon}_s},Y^{\varepsilon}_s)ds+\frac{1}{\sqrt{\varepsilon}}\int^t_0 h( X^{\varepsilon}_s, \mathscr{L}_{X^{\varepsilon}_s}, Y^{\varepsilon}_s)ds
+\frac{1}{\sqrt{\varepsilon}} \int^t_0 g(X^{\varepsilon}_s,\mathscr{L}_{X^{\varepsilon}_s}, Y^{\varepsilon}_s)dW_s.
\end{array}\right.
\end{equation*}

(ii)  \eref{Bound Condition} and \eref{A6} in condition $\ref{A3}$ are used to ensure the solution $(X_t^\varepsilon,Y_t^\varepsilon)$ have finite $k$-th moment, for any $k\in\mathbb{N}_{+}$ (see \eref{X1} and \eref{Y0} below). In fact, the $18$-th moment of $Y_t^\varepsilon$ is enough for proving our main result, thus (\ref{A6}) could be replaced by the following condition,
\begin{eqnarray*}
2\langle f(x,\mu, y_1)-f(x, \mu,y_2), y_1-y_2\rangle
+17\|g(x, \mu,y_1)-g(x,\mu, y_2)\|^2\leq -\gamma|y_1-y_2|^2.
\end{eqnarray*}

(iii) Condition $\ref{A2}$ is mainly used to investigate the regularity of solutions of Poisson equation (cf.~\cite[Proposition 4.1]{RSX1}). Condition $\ref{A4}$ is a necessary condition when studying the diffusion approximation for the multi-scale stochastic system  on the whole space (see \cite{PV1, PV2}). We remark that such kind of condition is also natural and similar to the centering condition in the theory of central limit theorem for multi-scale system.

\end{remark}

\subsection{Main results}
For a given vector-valued or matrix-valued map
$$F: \RR^n\times \mathscr{P}_2(\RR^n)\times \RR^m\rightarrow\mathbb{K},$$
we denote its averaged map by
\begin{equation*}\label{14}
\overline{F}(x,\mu)=\int_{\RR^m} F(x,\mu,y)\nu^{x,\mu}(dy),\nonumber
\end{equation*}
where $\nu^{x,\mu}$ is the unique invariant probability measure of transition semigroup to the frozen equation (\ref{FEQ2}) below.

In order to present our main results, we introduce the following Poisson equation with measure dependence,
\begin{equation}
-\mathscr{L}_{2}(x,\mu)\Phi(x,\mu,y)=K(x,\mu,y),\label{PE}
\end{equation}
where $\Phi(x,\mu,y):=(\Phi^1(x,\mu,y),\ldots, \Phi^n(x,\mu,y))$,
$$\mathscr{L}_{2}(x,\mu)\Phi(x,\mu,y):=(\mathscr{L}_{2}(x,\mu)\Phi^1(x,\mu,y),\ldots, \mathscr{L}_{2}(x,\mu)\Phi^n(x,\mu,y))$$
and for any $k=1,\ldots,n,$
\begin{eqnarray*}
\mathscr{L}_{2}(x,\mu)\Phi^k(x,\mu,y):=\langle f(x,\mu,y), \partial_y \Phi^k(x,\mu,y)\rangle+\frac{1}{2}\text{Tr}\left[(g g^{*})(x,\mu,y)\partial^2_{yy} \Phi^k(x,\mu,y)\right].
\end{eqnarray*}
In section \ref{sec3.11}, we will verify that, under the conditions $\ref{A1}$-$\ref{A4}$, Poisson equation (\ref{PE}) admits a  solution $\Phi(x,\mu,y)$ satisfying $\Phi(\cdot,\mu,\cdot)\in C^{3,3}(\RR^n\times\RR^m;\RR^n)$ and $\Phi(x,\cdot,y)\in C^{(2,1)}(\mathscr{P}_2(\RR^n); \RR^n)$.

\vspace{0.2cm}
Moreover, we give the following notations for simplification.
\begin{eqnarray*}
&&\partial_x \Phi_K(x,\mu,y):=(\partial_x \Phi^i_K(x,\mu,y))_{\{1\leq i\leq n\}}:=(\langle\partial_x \Phi^i(x,\mu,y), K(x,\mu,y)\rangle)_{\{1\leq i\leq n\}};\\
&&\partial_y \Phi_h(x,\mu,y):=(\partial_y \Phi^i_h(x,\mu,y))_{\{1\leq i\leq n\}}:=(\langle\partial_y \Phi^i(x,\mu,y), h(x,\mu,y)\rangle)_{\{1\leq i\leq n\}};\\
&&\partial_y \Phi_{g}(x,\mu,y):=\left((\partial_y \Phi_{g})_{ij}(x,\mu,y)\right)_{\{1\leq i\leq n,1\leq j\leq d\}}\\
&&~\quad\quad\quad\quad\quad:=\left(\sum^{m}_{k=1}\partial_{y_k}\Phi^i(x,\mu,y)g_{kj}(x,\mu,y)\right)_{\{1\leq i\leq n,1\leq j\leq d\}}\\
&&\text{Tr}\left[\partial^2_{xy}\Phi_{\sigma g^{*}}\right](x,\mu,y):=\left(\text{Tr}\left[\partial^2_{xy}\Phi^i_{\sigma g^{*}}\right](x,\mu,y)\right)_{\{1\leq i\leq n\}}\\
&&\quad\quad\quad\quad\quad\quad\quad\quad\quad:=\left(\text{Tr}\left[(\sigma g^{*})(x,\mu,y)\partial^2_{xy}\Phi^i(x,\mu,y)\right]\right)_{\{1\leq i\leq n\}};
\end{eqnarray*}
\begin{eqnarray*}
&&(K\otimes\Phi)(x,\mu,y):=((K\otimes\Phi)_{ij}(x,\mu))_{\{1\leq i,j\leq n\}}:=\left(K_i(x,\mu,y)\Phi^j(x,\mu,y)\right)_{\{1\leq i,j\leq n\}};\\
&&\left[(\sigma g^{*})\partial_{y}\Phi\right](x,\mu,y):=\left(\left[(\sigma g^{*})\partial_{y}\Phi\right]_{ij}(x,\mu,y)\right)_{\{1\leq i,j\leq n\}}\\
&&\quad\quad\quad\quad\quad\quad\quad\quad:=\left(\sum^m_{k=1}(\sigma g^{*})_{ik}(x,\mu,y)\partial_{y_k}\Phi^j(x,\mu,y)\right)_{\{1\leq i,j\leq n\}};\\
&&\quad\quad\quad\quad\quad\quad\quad\quad:=\left(\sum^m_{k=1}\sum^d_{l=1}\sigma_{il} g^{*}_{lk}(x,\mu,y)\partial_{y_k}\Phi^j(x,\mu,y)\right)_{\{1\leq i,j\leq n\}};\\
&&(\sigma\sigma^{\ast})(x,\mu,y):=\left((\sigma\sigma^{\ast})_{ij}(x,\mu,y)\right)_{\{1\leq i,j\leq n\}}:=\left(\sum^{d}_{k=1}\sigma_{ik}(x,\mu,y)\sigma^{\ast}_{kj}(x,\mu,y)\right)_{\{1\leq i,j\leq n\}}.
\end{eqnarray*}

The following is our first main result.

\begin{theorem}\label{main result 1}
Suppose that conditions  $\ref{A1}$-$\ref{A4}$ hold.
If the initial values $\xi\in L^{9}(\Omega;\RR^n), \zeta\in L^{18}(\Omega;\RR^m)$.
Then as $\varepsilon\to 0$,  $\{X^{\varepsilon}\}_{\varepsilon>0}$ converges weakly in $C([0,T];\RR^n)$  to the solution of following equation
\begin{eqnarray}\label{eq5}
dX_t= \!\!\!\!\!\!\!\!&& \Theta(X_t,\mathscr{L}_{X_t})dt +\Sigma(X_t,\mathscr{L}_{X_t})d\hat{W}_t,\quad X_0=\xi,~~~
\end{eqnarray}
where $\hat{W}_t$ is a $n$-dimensional standard Brownian motion and
\begin{eqnarray*}
&&\Theta(x,\mu):=\overline{b+\partial_x \Phi_K+\partial_y \Phi_h+\text{Tr}\left[\partial^2_{xy}\Phi_{\sigma g^{*}}\right]}(x,\mu),\\
&&\Sigma(x,\mu):= \Big(\overline{(K\otimes\Phi)+(K\otimes\Phi)^{\ast}+(\sigma g^{*})\partial_{y}\Phi+[(\sigma g^{*})\partial_{y}\Phi]^{\ast}+(\sigma\sigma^{\ast})}\Big)^{\frac{1}{2}}(x,\mu).
\end{eqnarray*}

\end{theorem}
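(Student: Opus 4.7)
The plan follows the classical Papanicolaou--Stroock--Varadhan program adapted to the McKean--Vlasov setting: I would (i) establish uniform-in-$\varepsilon$ moment bounds on $(X^\varepsilon,Y^\varepsilon)$; (ii) prove tightness of $\{X^\varepsilon\}_{\varepsilon>0}$ in $C([0,T];\RR^n)$; (iii) identify every subsequential weak limit as a solution of the martingale problem associated with the operator $L_\mu$; and (iv) reconstruct a weak solution of \eref{eq5} from that martingale problem. The moment bounds $\sup_{\varepsilon>0}\sup_{t\in[0,T]}(\EE|X^\varepsilon_t|^9+\EE|Y^\varepsilon_t|^{18})<\infty$ follow from the growth hypothesis $\ref{A3}$ together with the dissipativity \eref{sm} via a standard Lyapunov computation on the fast equation.

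The tightness of $\{X^\varepsilon\}$ is the first nontrivial step because of the singular drift $\varepsilon^{-1/2}K$. The key device is the corrector: applying the McKean--Vlasov It\^o formula to $X^\varepsilon_t+\sqrt\varepsilon\,\Phi(X^\varepsilon_t,\mathscr{L}_{X^\varepsilon_t},Y^\varepsilon_t)$ and invoking the Poisson equation $-\mathscr{L}_2\Phi=K$, the $\varepsilon^{-1/2}K\,dt$ contribution is canceled exactly by the $\varepsilon^{-1}\mathscr{L}_2\Phi\,dt$ term generated by the fast variable in It\^o's formula. What remains is bounded uniformly in $\varepsilon$ (using the a priori moment estimates and the regularity of $\Phi$), and an application of Kolmogorov's (or Aldous') criterion then yields tightness.

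For the identification of the limit, I would fix $\varphi\in C_b^2(\RR^n)$ and apply It\^o's formula to
$$\varphi(X^\varepsilon_t)+\sqrt\varepsilon\,\langle\nabla\varphi(X^\varepsilon_t),\Phi(X^\varepsilon_t,\mathscr{L}_{X^\varepsilon_t},Y^\varepsilon_t)\rangle.$$
Again \eref{PE} makes the $\varepsilon^{-1/2}$ contributions cancel. The surviving $O(1)$ terms involve $Y^\varepsilon$-dependent integrands which must be replaced by their averages against $\nu^{X^\varepsilon_t,\mathscr{L}_{X^\varepsilon_t}}$; I would carry this out via a Khasminskii time-discretization on intervals of length $\Delta\to 0$, comparing $Y^\varepsilon$ with a stationary frozen fast diffusion and invoking its exponential ergodicity together with the $\mu$-regularity of $\Phi$ provided by Section \ref{sec3.11}. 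A careful bookkeeping of the remaining It\^o terms reveals that the $O(1)$ drift is exactly $\langle\Theta,\nabla\varphi\rangle+\tfrac12\mathrm{Tr}[\Sigma\Sigma^*\nabla^2\varphi]$, so that the weak limit $X$ solves the martingale problem for $L_\mu$.

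Finally, the positive semi-definiteness of the symmetric matrix inside the square root of $\Sigma$ is automatic, since it arises as the a.s.\ limit of (non-negative-definite) quadratic variations; hence the square root is well defined. A standard martingale representation argument on a (possibly enlarged) probability space then supplies a Brownian motion $\hat W$ such that $X$ solves \eref{eq5}. The main technical obstacle I anticipate is the Khasminskii averaging step in the McKean--Vlasov context, where the frozen invariant measure $\nu^{x,\mu}$ itself depends on $\mu$, so one must combine the Lions calculus with uniform bounds on the coupled system and exploit the Wasserstein-space regularity of $\Phi$ established in Section \ref{sec3.11}. A secondary subtlety is the well-posedness of the limiting martingale problem (needed to promote subsequential convergence to convergence of the whole family), and it is precisely this difficulty that motivates the alternative construction carried out in Theorem \ref{main result 2}.
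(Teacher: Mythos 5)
Your outline reproduces the paper's own strategy for Theorem~\ref{main result 1} almost verbatim: moment bounds on $(X^\varepsilon,Y^\varepsilon)$, the corrector $\Phi$ from the Poisson equation~\eref{PE} to cancel the singular drift $\varepsilon^{-1/2}K$, tightness, passage to the Skorohod representation, Khasminskii time-discretization to replace fast-variable integrands by $\nu^{x,\mu}$-averages, and then the martingale problem for $L_\mu$ combined with the uniqueness result of Li--Min. That part is on target.

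Where you are too quick is the final claim that positive semi-definiteness of the matrix under the square root in $\Sigma$ is ``automatic, since it arises as the a.s.\ limit of non-negative-definite quadratic variations.'' The matrix that shows up in $L_\mu$ via the It\^o expansion is
$\overline{(K\otimes\Phi)+(K\otimes\Phi)^{\ast}+(\sigma g^{*})\partial_{y}\Phi+[(\sigma g^{*})\partial_{y}\Phi]^{\ast}+(\sigma\sigma^{\ast})}$.
It enters through the coefficient of $\partial_{x_i}\partial_{x_j}U$ after integrating by parts with $\Phi$; it is not itself a limit of quadratic variations, and nothing in the algebraic derivation forces it to be PSD. What \emph{is} a limit of quadratic variations is $\overline{(\partial_y\Phi_g+\sigma)(\partial_y\Phi_g+\sigma)^{\ast}}$, which is manifestly PSD. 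The two matrices are equal, but this is a genuine lemma (Remark~\ref{re0}: apply It\^o to $\Phi^i\Phi^j$ along the stationary frozen process $Y^{x,\mu,\eta}$, take expectations, and use $\mathscr{L}_2\Phi^i=-K^i$). The paper explicitly records this as an unresolved gap in the martingale-problem approach (Remark~2.9) and closes it only through Theorem~\ref{main result 2} together with Remark~\ref{re0}; your proposal asserts the conclusion without the argument. Relatedly, you attribute the need for Theorem~\ref{main result 2} to well-posedness of the limiting martingale problem, but the paper already handles uniqueness by citing Li--Min; the stated motivation for the second method is precisely the positivity gap just described.
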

\begin{remark}
As we mentioned before, there is a gap about the diffusion coefficient $\Sigma$. More precisely, it is obviously that the matrix $$\overline{(K\otimes\Phi)+(K\otimes\Phi)^{\ast}+(\sigma g^{*})\partial_{y}\Phi+[(\sigma g^{*})\partial_{y}\Phi]^{\ast}+(\sigma\sigma^{\ast})}$$ is symmetric, then its square root makes sense if it is also positive semi-definite, which seems nontrivial and is necessary to be asserted. From another view, we shall use a different method to describe the weakly convergence process of $\{X^{\varepsilon}\}_{\varepsilon>0}$ in $C([0,T];\RR^n)$, whose diffusion coefficient $\tilde{\Sigma}$ (see Theorem \ref{main result 2} below) is the square root of a positive semi-definite symmetry matrix, thus the gap will be filled.
\end{remark}

\vspace{0.2cm}

Now we state our second main result.
\begin{theorem}\label{main result 2}
Suppose that conditions  $\ref{A1}$-$\ref{A4}$ hold.
If the initial values $\xi\in L^{9}(\Omega;\RR^n), \zeta\in L^{18}(\Omega;\RR^m)$.
Then as $\varepsilon\to 0$,  $\{X^{\varepsilon}\}_{\varepsilon>0}$ converges weakly in $C([0,T];\RR^n)$  to the solution of following equation
\begin{eqnarray}\label{eq62}
dX_t= \!\!\!\!\!\!\!\!&&\Theta(X_t,\mathscr{L}_{X_t})dt +\tilde{\Sigma}(X_t,\mathscr{L}_{X_t})d\tilde{W}_t,\quad X_0=\xi,~~~
\end{eqnarray}
where $\tilde{W}_t$ is a $n$-dimensional standard Brownian motion  and
\begin{eqnarray*}
&&\Theta(x,\mu):=\overline{b+\partial_x \Phi_K+\partial_y \Phi_h+\text{Tr}\left[\partial^2_{xy}\Phi_{\sigma g^{*}}\right]}(x,\mu),\\
&&\tilde{\Sigma}(x,\mu):= \Big(\overline{(\partial_y\Phi_g+\sigma)(\partial_y\Phi_g+\sigma)^*}\Big)^{\frac{1}{2}}(x,\mu).
\end{eqnarray*}
\end{theorem}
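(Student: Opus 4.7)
The plan is to follow the same overall scheme as for Theorem \ref{main result 1}---tightness of $\{X^\varepsilon\}$ in $C([0,T];\RR^n)$ (established in Section 4.1), extraction of a weakly convergent subsequence along $\varepsilon_k\to 0$ with limit $X$, and then identification of the law of $X$---but to replace the martingale-problem identification by an explicit It\^o--corrector decomposition of $X^\varepsilon$ itself, from which the quadratic variation of the martingale part can be read off directly, and then apply the martingale representation theorem.

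First, I would apply It\^o's formula (in the McKean--Vlasov/Lions sense) to the Poisson corrector $\sqrt\varepsilon\,\Phi(X^\varepsilon_t,\mathscr{L}_{X^\varepsilon_t},Y^\varepsilon_t)$, where $\Phi$ solves $-\mathscr{L}_2\Phi=K$ and has the regularity derived in Section \ref{sec3.11}. Because $\mathscr{L}_2$ is the generator of the fast $Y$-component, the $\varepsilon^{-1}$-scale term from the It\^o expansion cancels the singular $\varepsilon^{-1/2}K$ drift in the equation for $X^\varepsilon$, producing the semimartingale decomposition
$$
X^\varepsilon_t \;=\; \xi + \int_0^t \widetilde\Theta^\varepsilon(X^\varepsilon_s,\mathscr{L}_{X^\varepsilon_s},Y^\varepsilon_s)\,ds + M^\varepsilon_t + R^\varepsilon_t,
$$
where $R^\varepsilon_t=\sqrt\varepsilon\bigl[\Phi(\xi,\mathscr{L}_\xi,\zeta)-\Phi(X^\varepsilon_t,\mathscr{L}_{X^\varepsilon_t},Y^\varepsilon_t)\bigr]$ is uniformly negligible by the moment estimates on $(X^\varepsilon,Y^\varepsilon)$ and the growth of $\Phi$, $\widetilde\Theta^\varepsilon$ is the non-averaged integrand whose $\nu^{x,\mu}$-mean equals $\Theta(x,\mu)$, and the martingale term is explicitly
$$
M^\varepsilon_t \;=\; \int_0^t \bigl[\sigma+\partial_y\Phi_g\bigr](X^\varepsilon_s,\mathscr{L}_{X^\varepsilon_s},Y^\varepsilon_s)\,dW_s,\qquad \langle M^\varepsilon\rangle_t=\int_0^t[(\partial_y\Phi_g+\sigma)(\partial_y\Phi_g+\sigma)^\ast](X^\varepsilon_s,\mathscr{L}_{X^\varepsilon_s},Y^\varepsilon_s)\,ds,
$$
which is manifestly symmetric and positive semidefinite pointwise.

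Second, I would pass to the limit along $\varepsilon_k\to 0$. Using the same auxiliary-Poisson-equation/It\^o averaging machinery as in the proof of Theorem \ref{main result 1}---applied now to the fluctuation $(\partial_y\Phi_g+\sigma)(\partial_y\Phi_g+\sigma)^\ast-\overline{(\partial_y\Phi_g+\sigma)(\partial_y\Phi_g+\sigma)^\ast}$ of the quadratic-variation integrand, as well as to $\widetilde\Theta^\varepsilon-\Theta$---one obtains, for each $t\in[0,T]$,
$$
\int_0^t\widetilde\Theta^\varepsilon(X^\varepsilon_s,\mathscr{L}_{X^\varepsilon_s},Y^\varepsilon_s)\,ds\longrightarrow\int_0^t\Theta(X_s,\mathscr{L}_{X_s})\,ds,\quad \langle M^\varepsilon\rangle_t\longrightarrow\int_0^t\overline{(\partial_y\Phi_g+\sigma)(\partial_y\Phi_g+\sigma)^\ast}(X_s,\mathscr{L}_{X_s})\,ds
$$
in probability. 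Joint tightness of $(X^\varepsilon,M^\varepsilon)$ then gives a weak limit $(X,M)$ with $M$ a continuous martingale whose quadratic variation equals the above limit. Since $\overline{(\partial_y\Phi_g+\sigma)(\partial_y\Phi_g+\sigma)^\ast}$ is a $\nu^{x,\mu}$-mean of matrices of the form $AA^\ast$, its unique symmetric positive-semidefinite square root $\tilde\Sigma$ is well defined and continuous with $\tilde\Sigma\tilde\Sigma^\ast=\overline{(\partial_y\Phi_g+\sigma)(\partial_y\Phi_g+\sigma)^\ast}$. The martingale representation theorem (possibly after enlarging the probability space by an independent $n$-dimensional Brownian motion, as in Karatzas--Shreve, Theorem 3.4.2) then yields a standard $n$-dimensional Brownian motion $\tilde W$ with $M_t=\int_0^t\tilde\Sigma(X_s,\mathscr{L}_{X_s})\,d\tilde W_s$, so $X$ is a weak solution of \eref{eq62}. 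Weak uniqueness for that McKean--Vlasov SDE (which is inherited from the Lipschitz regularity of $\Theta,\tilde\Sigma$ following from \ref{A2}) identifies the limit independently of the subsequence, so the full family $\{X^\varepsilon\}$ converges weakly.

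The main obstacle I anticipate lies in the quadratic-variation convergence: unlike the drift integral, the integrand is \emph{quadratic} in the Poisson corrector $\Phi$, so the auxiliary Poisson equation needed to remove its fluctuation carries a right-hand side built from products $\partial_y\Phi_g\cdot(\partial_y\Phi_g)^\ast$ and $\sigma\cdot(\partial_y\Phi_g)^\ast$, whose joint regularity in $(x,\mu,y)$ must be tracked carefully. Verifying that the corresponding second-level corrector belongs to $C^{3,(2,1),3}_b$ and has all the derivatives needed for the Lions It\^o formula is exactly the role of the full strength of hypothesis \ref{A2}. A second delicate point is controlling the measure-derivative terms that appear in the It\^o expansion of $\sqrt\varepsilon\,\Phi(X^\varepsilon_t,\mathscr{L}_{X^\varepsilon_t},Y^\varepsilon_t)$: one must show that the $\partial_\mu\Phi$-contributions are $o(1)$, which uses the uniform $L^2(\mu)$-bound on $\partial_\mu\Phi$ established in Section \ref{sec3.11} together with the moment estimates \eref{X1}--\eref{Y0}.
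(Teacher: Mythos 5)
Your overall plan coincides with the paper's: remove the $\varepsilon^{-1/2}K$ singular drift via the Poisson corrector $\sqrt\varepsilon\,\Phi$, read off the explicit martingale part $M^\varepsilon_t=\int_0^t[\sigma+\partial_y\Phi_g]\,dW_s$, pass to the limit for the drift integral and the quadratic variation, invoke the martingale representation theorem, and close with weak uniqueness. The one place where your proposal diverges is in how you would average the quadratic-variation integrand $(\partial_y\Phi_g+\sigma)(\partial_y\Phi_g+\sigma)^{\ast}$. You anticipate constructing a second-level Poisson corrector for this quadratic functional and worry about establishing the $C^{3,(2,1),3}_b$-regularity of that corrector. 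The paper avoids this entirely: both the drift fluctuations and the quadratic-variation fluctuations are averaged by the \emph{same} Khasminskii time-discretization lemma (Lemma~\ref{lem3}), which relies only on the exponential ergodicity of the frozen process \eref{FEQ2} and on a pointwise Lipschitz/polynomial-growth bound for the integrand — namely that $R:=(\partial_y\Phi_g+\sigma)(\partial_y\Phi_g+\sigma)^{\ast}$ satisfies \eref{es8}, which follows directly from the bounds on $\Phi$ in Proposition~\ref{P3.6}. No second corrector and no second application of the Lions It\^o formula are required. Thus your "main obstacle" is not actually present; the cost of the paper's route is only verifying the growth condition on $R$ and that $\bar R$ is Lipschitz in $(x,\mu)$, which is elementary from \eref{E1}--\eref{E4} and the ergodic estimate \eref{Ergodicity}. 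Apart from this, the rest of your reasoning (the $\sqrt\varepsilon\,\Phi$ remainder being negligible via moment bounds, positive semidefiniteness of the averaged covariance guaranteeing a well-defined PSD square root $\tilde\Sigma$, the use of the martingale representation theorem possibly on an enlarged space, and weak uniqueness of \eref{eq62}) matches the paper's Steps 1--3 in Subsection 4.2.
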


\begin{remark}\label{re0}
It is necessary to take up some space to explain the relation between the diffusion coefficients $\Sigma$ and $\tilde{\Sigma}$. In fact $\Sigma\equiv\tilde{\Sigma}$, which is equivalent to
\begin{eqnarray}
\overline{(\partial_y\Phi_g)(\partial_y\Phi_g)^{\ast}}(x,\mu)=\overline{(K\otimes\Phi)+(K\otimes\Phi)^{\ast}}(x,\mu)\label{Assert1}
\end{eqnarray}
and
\begin{eqnarray}
\overline{(\partial_y\Phi_g)\cdot \sigma^{\ast}+\sigma\cdot(\partial_y\Phi_g)^{\ast}}(x,\mu)=\overline{(\sigma g^{*})\partial_{y}\Phi+[(\sigma g^{*})\partial_{y}\Phi]^{\ast}}(x,\mu).\label{Assert2}
\end{eqnarray}
By the definition, it is easy to see that $\sigma\cdot(\partial_y\Phi_g)^{\ast}(x,\mu,y)=(\sigma g^{*})\partial_{y}\Phi(x,\mu,y)$, hence \eref{Assert2} holds.

Next, we show \eref{Assert1} holds. Define $\{Y^{x,\mu,\eta}_t\}_{t\geq 0}$ is the solution of the frozen equation (see \eref{FEQ2} below) with random initial value $Y^{x,\mu,\eta}_0=\eta$, whose distribution is the invariant measure $\nu^{x,\mu}$. Then it is clear that $\{Y^{x,\mu,\eta}_t\}_{t\geq 0}$ is the stationary process. For any $1\leq i,j\leq n$, by It\^{o}'s formula, we have for any $t>0$,
\begin{eqnarray}
\Phi^i(x,\mu, Y^{x,\mu,\eta}_t)\Phi^j(x,\mu, Y^{x,\mu,\eta}_t)=\!\!\!\!\!\!\!\!&&\Phi^i(x,\mu, \eta)\Phi^j(x,\mu, \eta)\nonumber\\
&&+\int^t_0 \mathscr{L}_{2}(x,\mu)\Phi^i(x,\mu,Y^{x,\mu,\eta}_s) \Phi^j(x,\mu,Y^{x,\mu,\eta}_s)ds\nonumber\\
&&+\int^t_0 \mathscr{L}_{2}(x,\mu)\Phi^j(x,\mu,Y^{x,\mu,\eta}_s) \Phi^i(x,\mu,Y^{x,\mu,\eta}_s)ds\nonumber\\
&&+\int^t_0\left[(\partial_y\Phi_g)(\partial_y\Phi_g)^{\ast}\right]_{ij}(x,\mu,Y^{x,\mu,\eta}_s)ds+M_t,\label{remarkF1}
\end{eqnarray}
where $M_t$ is a local martingale define by
\begin{eqnarray*}
M_t=\!\!\!\!\!\!\!\!&&\int^t_0 \langle \partial_y\Phi^i(x,\mu,Y^{x,\mu,\eta}_s)\Phi^j(x,\mu,Y^{x,\mu,\eta}_s)\\
&&\quad\quad\quad+ \Phi^i(x,\mu,Y^{x,\mu,\eta}_s)\partial_y \Phi^j(x,\mu,Y^{x,\mu,\eta}_s), g(x,\mu,Y^{x,\mu,\eta}_s)dW_s\rangle.
\end{eqnarray*}
Note that
\begin{eqnarray*}
&&\mathscr{L}_{2}(x,\mu)\Phi^i(x,\mu,y)=-K^i(x,\mu,y),\\
&&\mathscr{L}_{2}(x,\mu)\Phi^j(x,\mu,y)=-K^j(x,\mu,y).
\end{eqnarray*}
Then by taking expectation on both sides in \eref{remarkF1}, we will obtain that
\begin{eqnarray*}
\left[\overline{(K\otimes\Phi)+(K\otimes\Phi)^{\ast}}\right]_{ij}(x,\mu)=\left[\overline{(\partial_y\Phi_g)(\partial_y\Phi_g)^{\ast}}\right]_{ij}(x,\mu).
\end{eqnarray*}
Hence, \eref{Assert1} holds.

\end{remark}

\section{Poisson equation, a priori estimates and tightness}

In this section, we present some necessary regularities of Poisson equation (\ref{PE}) which play an important role in our analysis firstly. Secondly, the uniform estimates and increment estimates of time for  $X^\varepsilon$ to (\ref{eq4}) are obtained. Finally, we study the tightness of the solution $\{X^\varepsilon\}_{\varepsilon>0}$ to equation (\ref{eq4}) in $C([0,T],\RR^n)$.
\subsection{Regularity of Poisson equation}\label{sec3.11}

Recall the frozen equation for any fixed $x\in\RR^n$ and $\mu\in\mathscr{P}_2(\RR^n)$,
\begin{equation}\left\{\begin{array}{l}\label{FEQ2}
\displaystyle
dY_{t}=f(x, \mu, Y_{t})dt+g(x, \mu, Y_{t})dW_{t},\\
Y_{0}=y\in\RR^m.\\
\end{array}\right.
\end{equation}
In view of \eref{A21} and \eref{sm}, it is clear that
$(\ref{FEQ2})$ admits a unique strong solution $\{Y_{t}^{x,\mu,y}\}_{t\geq 0}$, furthermore, it admits a unique invariant probability measure denoted by $\nu^{x,\mu}$ (cf.~e.g.~\cite[Theorem 4.3.9]{LR1}).


The following regularity of the solution of Poisson equation (\ref{PE}), which plays an important role in the proof of our main results.

\begin{proposition}\label{P3.6}
Suppose that $\ref{A1}$-$\ref{A4}$ hold. Poisson equation \eref{PE} admits a solution
\begin{eqnarray}
\Phi(x,\mu,y):=\int^{\infty}_{0} \EE K(x,\mu,Y^{x,\mu,y}_t)dt,\label{SPE}
\end{eqnarray}
which satisfies that $\Phi(\cdot,\mu,\cdot)\in C^{3,3}(\RR^n\times\RR^m;\RR^n)$ and $\Phi(x,\cdot,y)\in C^{(2,1)}(\mathscr{P}_2(\RR^n); \RR^n)$.  Moreover, there exists $C>0$ such that for any $x,z\in\RR^n,\mu\in\mathscr{P}_2(\RR^n)$ and $y\in\RR^m$,
\begin{eqnarray}
\!\!\!\!\!\!\!\!\!\!\!\!\!\!\!\!&&\max\left\{|\Phi(x,\mu,y)|,\|\partial_x \Phi(x,\mu,y)\|, \|\partial_{\mu}\Phi(x,\mu,y)\|_{L^2(\mu)},\|\partial^2_{xx} \Phi(x,\mu,y)\|,\right.\nonumber\\
\!\!\!\!\!\!\!\!\!\!\!\!\!\!\!\!&& \quad\quad\quad\left. |\partial_{z}\partial_{\mu}\Phi(x,\mu,y)(z)|, \|\partial_{\mu}\partial_{\mu}\Phi(x,\mu,y)(z)\|_{L^2(\mu)}\right \}\leq C\big(1+|y|\big);\label{E1}\\
\!\!\!\!\!\!\!\!\!\!\!\!\!\!\!\!&&\max\Big\{\|\partial_y \Phi(x,\mu,y)\|,\|\partial^2_{yy} \Phi(x,\mu,y)\|,\|\partial^2_{xy} \Phi(x,\mu,y)\|,\|\partial_{\mu}\partial_y\Phi(x,\mu,y)\|_{L^2(\mu)}\Big\}\leq C;\label{E2}\\
\!\!\!\!\!\!\!\!\!\!\!\!\!\!\!\!&&\max\Big\{\|\partial^3_{xyx} \Phi(x,\mu,y)\|,
\|\partial^3_{yyx} \Phi(x,\mu,y)\|,\|\partial_{\mu}\partial^2_{yx} \Phi(x,\mu,y)\|_{L^2(\mu)}\Big\}\leq C.\label{E4}
\end{eqnarray}

\end{proposition}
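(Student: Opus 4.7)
The plan is to obtain $\Phi$ as the natural integral representation \eqref{SPE}, use exponential ergodicity of the frozen equation \eqref{FEQ2} combined with the centering condition \ref{A4} to justify the convergence of this integral, and then transfer regularity of the coefficients $K, f, g$ to $\Phi$ by differentiating under the integral sign. Throughout, the key analytic input is the dissipativity in \eqref{sm}, which yields exponential contraction of the frozen semigroup in $L^2(\Omega)$ and in its derivatives with respect to $x$, $y$, and $\mu$.

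First I would verify existence. Under \eqref{sm}, a standard It\^o's formula argument on $|Y_t^{x,\mu,y_1} - Y_t^{x,\mu,y_2}|^2$ gives an exponentially contractive estimate $\mathbb{E}|Y_t^{x,\mu,y_1}-Y_t^{x,\mu,y_2}|^2 \le e^{-\gamma t}|y_1 - y_2|^2$, and an invariant measure $\nu^{x,\mu}$ exists with $\int|y|^k\,\nu^{x,\mu}(dy)<\infty$ via \eqref{Bound Condition}. Because of \ref{A4}, $\int K(x,\mu,y)\nu^{x,\mu}(dy)=0$, so the Lipschitz property of $K$ yields
\begin{equation*}
\bigl|\mathbb{E}K(x,\mu,Y_t^{x,\mu,y})\bigr| \le C\,\mathbb{W}_2(\mathscr{L}(Y_t^{x,\mu,y}),\nu^{x,\mu}) \le C\bigl(1+|y|\bigr)e^{-\gamma t/2},
\end{equation*}
which makes \eqref{SPE} absolutely convergent and gives the first bound in \eqref{E1}. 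Next I would verify that $\Phi$ solves \eqref{PE} by writing $\partial_t \mathbb{E}K(x,\mu,Y_t^{x,\mu,y}) = \mathbb{E}[\mathscr{L}_2(x,\mu)K](x,\mu,Y_t^{x,\mu,y})$ via Kolmogorov's backward equation and integrating in $t \in [0,\infty)$; using $\lim_{t\to\infty}\mathbb{E}K(x,\mu,Y_t^{x,\mu,y})=0$, this yields $-\mathscr{L}_2(x,\mu)\Phi = K$.

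For the $(x,y)$-regularity I would differentiate under the integral in \eqref{SPE}. The derivative $\partial_y \Phi$ involves the sensitivity $\partial_y Y_t^{x,\mu,y}$, which solves a linear SDE whose coefficients are the derivatives of $f,g$; the dissipativity \eqref{sm} plus boundedness of $\partial_y f,\partial_y g$ (from \ref{A2}) yields $\mathbb{E}\|\partial_y Y_t^{x,\mu,y}\|^2 \le Ce^{-\gamma t}$, hence the bound in \eqref{E2}. The same scheme, with products of first- and second-order sensitivities controlled via It\^o's formula and the dissipative estimate, gives all the bounds in \eqref{E2}--\eqref{E4}. Derivatives in $x$ produce an additional inhomogeneous term from $\partial_x f,\partial_x g$ but no exponential growth, so the resulting bounds are uniform in $(x,\mu)$.

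The main obstacle is the regularity in $\mu$ on Wasserstein space, since the measure $\mu$ enters $\Phi$ through three channels simultaneously: the coefficient $K$ itself, the coefficients $f,g$ of the frozen equation, and the invariant measure $\nu^{x,\mu}$ (implicit in the exponential decay). To handle it, I would lift everything to $L^2(\Omega)$ via Lions' approach: replace $\mu$ by the law of an $L^2$-random variable $\xi$ with $\mathscr{L}(\xi)=\mu$, set $\widetilde\Phi(x,\xi,y) := \int_0^\infty \mathbb{E}K(x,\mathscr{L}(\xi),Y_t^{x,\mathscr{L}(\xi),y})\,dt$, and differentiate in the $\xi$-variable in the Fr\'echet sense. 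The derivative $D_\xi Y_t^{x,\mathscr{L}(\xi),y}$ can be written as the solution of a linear McKean--Vlasov type SDE whose inhomogeneity is $\partial_\mu f$ and $\partial_\mu g$ evaluated along $(\xi,Y_t)$, and a Gronwall argument combined with \eqref{sm} (and boundedness from \ref{A2}) produces an exponentially decaying $L^2$-bound. Iterating this procedure and combining with the previously established $x,y$-derivative estimates yields the remaining claims in \eqref{E1}--\eqref{E4}, using at each step the uniform bounds on $\partial_\mu,\partial_z\partial_\mu,\partial_\mu\partial_\mu$ of $K,f,g$ provided by \ref{A2}. Joint continuity in $(\mu,z)$ of the measure derivatives follows by approximation in $\mathbb{W}_2$ combined with dominated convergence, exploiting the H\"older control of the measure derivatives of $F\in\{K,f,g\}$ postulated in \ref{A2}.
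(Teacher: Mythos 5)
Your high-level plan matches the paper's: represent $\Phi$ by the ergodic integral \eref{SPE}, exploit the dissipativity \eref{sm} for exponential mixing under the centering condition \ref{A4}, and bound the derivatives of $\Phi$ by differentiating under the integral and controlling sensitivities of the frozen flow. However, there is a concrete error in your treatment of the measure derivative. You assert that the Lions sensitivity $D_\xi Y_t^{x,\mathscr{L}(\xi),y}$ solves a linear SDE with $\partial_\mu f$, $\partial_\mu g$ as forcing and that dissipativity plus Gronwall ``produces an exponentially decaying $L^2$-bound.'' That is false: a bounded inhomogeneity in a dissipative linear SDE yields a bound that is \emph{uniform in time}, not decaying. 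The paper's own estimates make this explicit: \eref{es19} gives only $\sup_{t\geq 0}\mathbb{E}|\partial_x Y_t^{x,\mu,y}\cdot h|^2\leq C|h|^2$, in contrast to \eref{9}, where $\partial_y Y_t$ does decay because its linearized equation has no forcing. The same contrast appears for the $\mu$-sensitivity: the paper proves $\sup_{x,y}\mathbb{E}|Y_t^{x,\mu_1,y}-Y_t^{x,\mu_2,y}|^2\leq C\,\mathbb{W}_2(\mu_1,\mu_2)^2$ with no time decay, while $\partial_y Y_t^{x,\mu_1,y}-\partial_y Y_t^{x,\mu_2,y}$ decays like $e^{-\beta t}$. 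Thus the convergence of the integrals for $\partial_x\Phi$, $\partial^2_{xx}\Phi$, $\partial_\mu\Phi$, $\partial_z\partial_\mu\Phi$, $\partial_\mu\partial_\mu\Phi$ in \eref{E1} cannot rest on decay of the flow sensitivities; it requires an additional mechanism combining ergodicity with the centering condition differentiated, and your sketch does not supply it.

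For \eref{E2} and \eref{E4} your scheme works as stated, because every chain-rule term there carries at least one $y$-sensitivity ($\partial_y Y_t$, $\partial^2_{yx}Y_t$, $\partial^3_{xyx}Y_t$, \ldots), each of which does decay exponentially. This is exactly why the paper is content to cite \cite{RSX1} for \eref{E1}--\eref{E2} and only details \eref{E4}: the $y$-involving derivatives are straightforward, while the ones without a $y$ index hinge on the more delicate decay argument that lives in the cited reference. One further technical difference worth noting: for $\partial_\mu\partial^2_{yx}\Phi$ the paper does not lift and differentiate in the $\xi$-variable. It instead shows the Lipschitz bound $\|\partial^2_{yx}\Phi(x,\mu_1,y)-\partial^2_{yx}\Phi(x,\mu_2,y)\|\leq C\,\mathbb{W}_2(\mu_1,\mu_2)$ directly (see \eref{es33}), which already implies boundedness of the Lions derivative in $L^2(\mu)$. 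Your lift-and-differentiate route is a legitimate alternative, but only once the false decay claim is replaced by the correct argument.
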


\begin{proof}
The proof is formulated in subsection \ref{app1.1} in the Appendix.
\end{proof}

 \subsection{Some a priori estimates}
In this subsection, we establish some preliminary results for the moment estimates of solution $(X_{t}^{\varepsilon}, Y_{t}^{\varepsilon})$ to the stochastic system (\ref{eq4}). To do this, we need the following fluctuation estimate.
\begin{lemma}\label{lem1}$\mathbf{(Fluctuation~estimates)}$
Assume that the conditions in Theorem \ref{main result 1} hold. For any $T>0$ and $\EE|\xi|^{3p/2}+\EE|\zeta|^{3p}<\infty$ for some $p\geq 2$, then there exists a constant $C_{p,T}>0$ such that for small enough $\vare>0$,
\begin{equation}\label{sfe}
\EE\left[\sup_{t\in[0,T]}\left|\int_0^tK(X_{s}^{\varepsilon}, \mathscr{L}_{X_{s}^{\varepsilon}},Y_{s}^{\varepsilon})ds\right|^p\right]\leq C_T\varepsilon^{p/2}\left(1+\EE|\xi|^{3p/2}+\EE|\zeta|^{3p}\right).
\end{equation}
\end{lemma}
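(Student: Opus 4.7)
The plan is to exploit the Poisson equation $-\mathscr{L}_2(x,\mu)\Phi=K$ in order to trade the fast-oscillating integral of $K$ for boundary terms and remainders carrying explicit powers of $\varepsilon$. The key tool is It\^o's formula for McKean--Vlasov functionals applied to $\Phi(X_t^{\varepsilon},\mathscr{L}_{X_t^{\varepsilon}},Y_t^{\varepsilon})$.

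I would first expand $d\Phi(X_t^{\varepsilon},\mathscr{L}_{X_t^{\varepsilon}},Y_t^{\varepsilon})$ by It\^o, grouping the resulting drift terms according to their power of $\varepsilon^{-1/2}$. The singular $\varepsilon^{-1}$ contribution acts only on the $Y$-variable through the leading generator of $Y^{\varepsilon}$ and, by the defining property of $\Phi$, equals $-K/\varepsilon$. Rearranging and multiplying through by $\varepsilon$ yields
\begin{equation*}
\int_0^t K(X_s^{\varepsilon},\mathscr{L}_{X_s^{\varepsilon}},Y_s^{\varepsilon})\,ds=\varepsilon\bigl[\Phi_0-\Phi_t^{\varepsilon}\bigr]+\sqrt{\varepsilon}\,\mathcal{R}_t^{\varepsilon}+\varepsilon\,\mathcal{S}_t^{\varepsilon},
\end{equation*}
where $\Phi_t^{\varepsilon}:=\Phi(X_t^{\varepsilon},\mathscr{L}_{X_t^{\varepsilon}},Y_t^{\varepsilon})$, the remainder $\mathcal{R}_t^{\varepsilon}$ aggregates the $\varepsilon^{-1/2}$-drifts ($\langle\partial_x\Phi,K\rangle$, $\langle\partial_y\Phi,h\rangle$, $\text{Tr}(\sigma g^{\ast}\partial^2_{xy}\Phi)$ and the measure-derivative drift $\tilde{\EE}\langle\partial_\mu\Phi(\tilde X_s^{\varepsilon}),\tilde K\rangle$) integrated in time, together with the $Y$-martingale $\int_0^t\partial_y\Phi\,g\,dW_s$, while $\mathcal{S}_t^{\varepsilon}$ collects the remaining $O(1)$ drift terms and the $X$-martingale $\int_0^t\partial_x\Phi\,\sigma\,dW_s$.

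Next I would take the $p$-th moment of $\sup_{t\in[0,T]}$ and estimate the three pieces separately, using Minkowski plus H\"older in time for the drift integrals and the Burkholder--Davis--Gundy inequality for the stochastic integrals. The growth bounds in Proposition \ref{P3.6}---namely $|\Phi|,\|\partial_x\Phi\|,\|\partial_\mu\Phi\|_{L^2(\mu)},\|\partial^2_{xx}\Phi\|,|\partial_z\partial_\mu\Phi|\le C(1+|y|)$ together with the uniform boundedness of $\|\partial_y\Phi\|,\|\partial^2_{yy}\Phi\|,\|\partial^2_{xy}\Phi\|,\|\partial_\mu\partial_y\Phi\|_{L^2(\mu)}$---combined with the coefficient bounds \eref{Bound Condition} and \eref{A6}, bound the integrand of $\mathcal{R}_t^{\varepsilon}$ by a polynomial in $(1+|Y_s^{\varepsilon}|)$ of degree at most $2$, whereas inside $\mathcal{S}_t^{\varepsilon}$ the second-order term $\text{Tr}(\sigma\sigma^{\ast}\partial^2_{xx}\Phi)$ contributes degree $3$ in $(1+|Y_s^{\varepsilon}|)$ and $\partial_x\Phi\cdot b$ contributes the cross product $(1+|Y_s^{\varepsilon}|)(1+|X_s^{\varepsilon}|+|Y_s^{\varepsilon}|)$. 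The measure-derivative pieces are handled by Cauchy--Schwarz on an independent copy, which produces products of $|Y_s^{\varepsilon}|$-polynomials with $L^2$-norms of $\tilde Y_s^{\varepsilon}$. Plugging in the a priori moment bounds for $X^{\varepsilon}$ and the space-time moment bounds for $Y^{\varepsilon}$ announced in Remark \ref{R2.1}(ii) (which are derived from \ref{A3} and the dissipativity \eref{sm}) and applying H\"older with exponents $3/2$ and $3$ to the cross term $|X_s^{\varepsilon}|^p|Y_s^{\varepsilon}|^p$, the $L^{3p/2}$-moment of $\xi$ and the $L^{3p}$-moment of $\zeta$ suffice to bound everything by $C_T\varepsilon^{p/2}(1+\EE|\xi|^{3p/2}+\EE|\zeta|^{3p})$. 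The dominant contribution is $\sqrt{\varepsilon}\mathcal{R}_t^{\varepsilon}$; the $\varepsilon^p$ prefactor carried by $\varepsilon\mathcal{S}_t^{\varepsilon}$ and the boundary term absorbs the higher polynomial growth of their integrands, leaving every contribution at the dominant order $\varepsilon^{p/2}$.

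The main obstacle is the bookkeeping of the McKean--Vlasov It\^o formula on the joint functional $(x,\mu,y)\mapsto\Phi$: one has to carefully identify which measure-derivative contributions pick up an additional $\varepsilon^{-1/2}$ factor (through the $K$-drift of the slow equation and the cross-variation with the common Brownian motion $W$) and verify that none of them is more singular than $\varepsilon^{-1/2}$. A secondary delicate point is the H\"older balance described above, which makes the hypothesis $\EE|\xi|^{3p/2}+\EE|\zeta|^{3p}<\infty$ sharp: the $L^{3p}$ threshold for $\zeta$ is dictated by the cubic growth of $\text{Tr}(\sigma\sigma^{\ast}\partial^2_{xx}\Phi)$ inside $\varepsilon\mathcal{S}_t^{\varepsilon}$, while the $L^{3p/2}$ threshold for $\xi$ comes precisely from the $3/2$--$3$ H\"older split applied to $|X|^p|Y|^p$.
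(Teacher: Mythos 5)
Your proposal is correct and follows essentially the same route as the paper: apply the McKean--Vlasov It\^o formula to $\Phi(X_t^{\varepsilon},\mathscr{L}_{X_t^{\varepsilon}},Y_t^{\varepsilon})$, use $\mathscr{L}_2\Phi=-K$ to convert the $\varepsilon^{-1}$-drift into the integral of $K$, multiply by $\varepsilon$ to isolate $\int_0^t K\,ds$, and then estimate each remaining piece (boundary term, $O(1)$-drifts, $\varepsilon^{-1/2}$-drifts, and the two martingales) via BDG, H\"older, and the a priori moment bounds, with the $3/2$--$3$ H\"older split on $|X|^p|Y|^p$ yielding exactly the $L^{3p/2}$/$L^{3p}$ thresholds on $\xi,\zeta$. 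The paper groups the terms as $\sum_{i=1}^{7}\mathscr{R}_i^{\varepsilon}$ rather than your $\varepsilon(\Phi_0-\Phi_t)+\sqrt{\varepsilon}\mathcal{R}+\varepsilon\mathcal{S}$, but the decomposition and the estimate of each piece are the same.
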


\begin{proof}
Recall the Poisson equation (\ref{PE}), by It\^{o}'s formula (cf.~\cite[Theorem 7.1]{BLPR}) for the function $\Phi(x,\mu,y)$, it leads to
\allowdisplaybreaks
\begin{eqnarray*}
&&\Phi(X_{t}^{\varepsilon},\mathscr{L}_{X^{\varepsilon}_{t}},Y^{\varepsilon}_{t})\\
=\!\!\!\!\!\!\!\!&&\Phi(\xi,\mathscr{L}_{\xi},\zeta)
+\int^t_0 \EE\left[b(X^{\varepsilon}_s,\mathscr{L}_{ X^{\varepsilon}_{s}}, Y^{\varepsilon}_s)\partial_{\mu}\Phi(x,\mathscr{L}_{X^{\varepsilon}_{s}},y)(X^{\varepsilon}_s)\right]\mid_{\{x=X_{s}^{\varepsilon},y=Y^{\varepsilon}_{s}\}}ds\\
&&+\frac{1}{\sqrt{\varepsilon}}\int^t_0\EE\left[K(X^{\varepsilon}_s,\mathscr{L}_{ X^{\varepsilon}_{s}}, Y^{\varepsilon}_s)\partial_{\mu}\Phi(x,\mathscr{L}_{X^{\varepsilon}_{s}},y)(X^{\varepsilon}_s)\right]\mid_{\{x=X_{s}^{\varepsilon},y=Y^{\varepsilon}_{s}\}}ds\\
&&+\int^t_0 \frac{1}{2}\EE \text{Tr}\left[\sigma\sigma^{*}(X^{\varepsilon}_s,\mathscr{L}_{ X^{\varepsilon}_{s}},Y^{\varepsilon}_s)\partial_z\partial_{\mu}\Phi(x,\mathscr{L}_{X^{\varepsilon}_{s}},y)(X^{\varepsilon}_s)\right]\mid_{\{x=X_{s}^{\varepsilon},y=Y^{\varepsilon}_{s}\}}ds\\
&&+\int^t_0 \mathscr{L}_{1}(\mathscr{L}_{X^{\varepsilon}_{s}},Y^{\varepsilon}_{s})\Phi(X_{s}^{\varepsilon},\mathscr{L}_{X^{\varepsilon}_{s}},Y^{\varepsilon}_{s})ds+\frac{1}{\sqrt{\varepsilon}}\int^t_0 \partial_x \Phi_{K}(X_{s}^{\varepsilon},\mathscr{L}_{X^{\varepsilon}_{s}},Y^{\varepsilon}_{s})ds\\
&&+\frac{2}{\sqrt{\varepsilon}}\int^t_0 \text{Tr}\left[\partial^2_{xy}\Phi_{\sigma g^{*}}\right](X^{\varepsilon}_{s},\mathscr{L}_{X^{\varepsilon}_{s}},Y^{\varepsilon}_{s})ds+\frac{1}{\sqrt{\varepsilon}}\int^t_0  \partial_y\Phi_{h}(X_{s}^{\varepsilon},\mathscr{L}_{X^{\varepsilon}_{s}},Y^{\varepsilon}_{s})ds      \\
&&+\frac{1}{\varepsilon}\int^t_0 \mathscr{L}_{2}(X_{s}^{\varepsilon},\mathscr{L}_{X^{\varepsilon}_{s}})\Phi(X_{s}^{\varepsilon},\mathscr{L}_{X^{\varepsilon}_{s}},Y^{\varepsilon}_{s})ds+M^{1,\varepsilon}_t+\frac{1}{\sqrt{\varepsilon}}M^{2,\varepsilon}_t,
\end{eqnarray*}
where $\mathscr{L}_{1}(\mu,y)\Phi(x,\mu,y):=(\mathscr{L}_{1}(\mu,y)\Phi^{i}(x,\mu,y))_{\{1\leq i\leq n\}}$ with

\begin{eqnarray*}
 \mathscr{L}_{1}(\mu,y)\Phi^i(x,\mu,y):=\!\!\!\!\!\!\!\!&&\left\langle b(x,\mu,y), \partial_x \Phi^{i}(x,\mu,y)\right \rangle\\
 &&+\frac{1}{2}\text{Tr}\left[\sigma\sigma^{*}(x,\mu,y)\partial^2_{xx} \Phi^i(x,\mu,y)\right ],\quad i=1,\ldots, n,
\end{eqnarray*}
and
$M^{1,\varepsilon}_t$ and $M^{2,\varepsilon}_t$ are $\RR^n$-valued local martingales, which are defined by
\begin{eqnarray}
&&M^{1,\varepsilon}_t=\int^t_0 \partial_x \Phi_{\sigma}(X_{s}^{\varepsilon},\mathscr{L}_{X^{\varepsilon}_{s}},Y_{s}^{\varepsilon})dW_s,\label{ma1}\\
&&M^{2,\varepsilon}_t=\int^t_0 \partial_y \Phi_{g}(X_{s}^{\varepsilon},\mathscr{L}_{X^{\varepsilon}_{s}},Y_{s}^{\varepsilon})dW_s.\label{ma2}
\end{eqnarray}
Then it is straightforward that for any $p\geq 2$,
\begin{eqnarray}\label{es1}
\!\!\!\!\!\!\!\!&&\EE\left[\sup_{t\in[0,T]}\left|\int_0^tK(X_{s}^{\varepsilon}, \mathscr{L}_{X_{s}^{\varepsilon}},Y_{s}^{\varepsilon})ds\right|^p\right]\nonumber\\
\leq\!\!\!\!\!\!\!\!&& \varepsilon^p\EE\Bigg\{\sup_{t\in[0,T]}\Big|\Phi(\xi,\mathscr{L}_{\xi},\zeta)-\Phi(X_{t}^{\varepsilon},\mathscr{L}_{X^{\varepsilon}_{t}},Y^{\varepsilon}_{t})\nonumber\\
\!\!\!\!\!\!\!\!&&
+\int^t_0 \EE\left[b(X^{\varepsilon}_s,\mathscr{L}_{ X^{\varepsilon}_{s}}, Y^{\varepsilon}_s)\partial_{\mu}\Phi(x,\mathscr{L}_{X^{\varepsilon}_{s}},y)(X^{\varepsilon}_s)\right]\mid_{\{x=X_{s}^{\varepsilon},y=Y^{\varepsilon}_{s}\}}ds\nonumber\\
\!\!\!\!\!\!\!\!&&+\int^t_0 \frac{1}{2}\EE \text{Tr}\left[\sigma\sigma^{*}(X^{\varepsilon}_s,\mathscr{L}_{ X^{\varepsilon}_{s}},Y^{\varepsilon}_s)\partial_z\partial_{\mu}\Phi(x,\mathscr{L}_{X^{\varepsilon}_{s}},y)(X^{\varepsilon}_s)\right]\mid_{\{x=X_{s}^{\varepsilon},y=Y^{\varepsilon}_{s}\}}ds\nonumber\\
\!\!\!\!\!\!\!\!&&+\int^t_0 \mathscr{L}_{1}(\mathscr{L}_{X^{\varepsilon}_{s}},Y^{\varepsilon}_{s})\Phi(X_{s}^{\varepsilon},\mathscr{L}_{X^{\varepsilon}_{s}},Y^{\varepsilon}_{s})ds\Big|^p\Bigg\} +\varepsilon^p\EE\left\{\sup_{t\in[0,T]}\big| M^{1,\varepsilon}_t\big|^p\right\}\nonumber\\
\!\!\!\!\!\!\!\!&&
+\varepsilon^{p/2}\EE\left\{\sup_{t\in[0,T]}\Big|\int^t_0\EE\left[K(X^{\varepsilon}_s,\mathscr{L}_{ X^{\varepsilon}_{s}}, Y^{\varepsilon}_s)\partial_{\mu}\Phi(x,\mathscr{L}_{X^{\varepsilon}_{s}},y)(X^{\varepsilon}_s)\right]\mid_{\{x=X_{s}^{\varepsilon},y=Y^{\varepsilon}_{s}\}}ds\Big|^p\right\}\nonumber\\
\!\!\!\!\!\!\!\!&&
+\varepsilon^{p/2}\EE\left\{\sup_{t\in[0,T]}\Big|\int^t_0\partial_x \Phi_K(X_{s}^{\varepsilon},\mathscr{L}_{X^{\varepsilon}_{s}},Y^{\varepsilon}_{s})ds\Big|^p\right\}+\varepsilon^{p/2}\EE\left\{\sup_{t\in[0,T]}\Big|\int^t_0\partial_y \Phi_h(X_{s}^{\varepsilon},\mathscr{L}_{X^{\varepsilon}_{s}},Y^{\varepsilon}_{s})ds\Big|^p\right\}\nonumber\\
\!\!\!\!\!\!\!\!&&+C\varepsilon^{p/2}\EE\left\{\sup_{t\in[0,T]}\Big|\int^t_0 \text{Tr}\left[\partial^2_{xy}\Phi_{\sigma g^{*}}\right](X^{\varepsilon}_{s},\mathscr{L}_{X^{\varepsilon}_{s}},Y^{\varepsilon}_{s})ds\Big|^p\right\}+\varepsilon^{p/2}\EE\left\{\sup_{t\in[0,T]}\big|M^{2,\varepsilon}_t\big|^p\right\}
\nonumber\\
=:\!\!\!\!\!\!\!\!&&\sum_{i=1}^{7} \mathscr{R}_i^\varepsilon.
\end{eqnarray}

Regarding the term $b+\frac{1}{\sqrt{\varepsilon}}K$ as the whole drift term, by a small modification in \cite[Lemma 3.1]{RSX1}, under the conditions $\ref{A1}$-$\ref{A2}$, we can easily prove that for any $p\geq 2$, $T>0$, there exists a constant $ C_{p,T}>0$ such that
\begin{eqnarray}
\mathbb{E}\left\{\sup_{t\in [0, T]}|X_{t}^{\varepsilon}|^{p}\right\}\leq \frac{C_{p,T}(1+\EE|\xi|^{p}+\EE|\zeta|^{p})}{\varepsilon^{p/2}}.\label{X1}
\end{eqnarray}
Furthermore, there exists a constant $ C_{p}>0$ such that for any small enough $\ep>0$,
\begin{eqnarray}
&&\sup_{t\geq 0}\mathbb{E}|Y_{t}^{\varepsilon}|^{p}\leq C_p(1+\EE|\zeta|^{p}),\label{Y0}\\
&&\mathbb{E}\left\{\sup_{t\in [0, T]}|Y_{t}^{\varepsilon}|^{p}\right\}\leq \frac{C_p(1+\EE|\zeta|^{p})T}{\ep},\label{Y1}
\end{eqnarray}
whose proof will be given in the subsection \ref{appendix 1} in Appendix.

For the term $\mathscr{R}_1^\varepsilon$. By (\ref{E1}) and (\ref{Y1}), we have
\begin{eqnarray*}
 \EE\left\{\sup_{t\in[0,T]}\left|\Phi(X_{t}^{\varepsilon},\mathscr{L}_{X^{\varepsilon}_{t}},Y^{\varepsilon}_{t})\right|^p\right\}\leq C \EE\left\{\sup_{t\in[0,T]}\left(1+\big|Y^{\varepsilon}_{t}\big|^p\right)\right\}\leq \frac{C_{p,T}\left(1+\EE|\zeta|^{p}\right)}{\varepsilon},
\end{eqnarray*}
and by (\ref{A11}), (\ref{E1}), \eref{X1} and (\ref{Y0}), it follows that
\begin{eqnarray*}
 \!\!\!\!\!\!\!\!&&\EE\left\{\sup_{t\in[0,T]}\Big|\int^t_0 \EE\left[b(X^{\varepsilon}_s,\mathscr{L}_{ X^{\varepsilon}_{s}}, Y^{\varepsilon}_s)\partial_{\mu}\Phi(x,\mathscr{L}_{X^{\varepsilon}_{s}},y)(X^{\varepsilon}_s)\right]\mid_{\{x=X_{s}^{\varepsilon},y=Y^{\varepsilon}_{s}\}}ds\Big|^p\right\}
 \nonumber\\
  \leq\!\!\!\!\!\!\!\!&&\EE\left\{\sup_{t\in[0,T]}\Big|\int^t_0 \left[\EE|b(X^{\varepsilon}_s,\mathscr{L}_{ X^{\varepsilon}_{s}}, Y^{\varepsilon}_s)|^2\right]^{1/2}\left[\EE\|\partial_{\mu}\Phi(x,\mathscr{L}_{X^{\varepsilon}_{s}},y)(X^{\varepsilon}_s)\|^2\right]^{1/2}\mid_{\{x=X_{s}^{\varepsilon},y=Y^{\varepsilon}_{s}\}}ds\Big|^p\right\}
 \nonumber\\
 \leq\!\!\!\!\!\!\!\!&&C_{p,T}\EE\left\{\int_0^T\left[1+\left(\EE|X^{\varepsilon}_s|^2\right)^{p/2}+\left(\EE|Y^{\varepsilon}_s|^2\right)^{p/2}\right](1+|Y^{\varepsilon}_s|^p)ds\right\}
 \nonumber\\
 \leq\!\!\!\!\!\!\!\!&&\vare^{-p/2}C_{p,T}\left[1+\left(\EE|\xi|^{2}\right)^{p/2}+\left(\EE|\zeta|^{2}\right)^{p/2}\right]\EE\int_0^T(1+|Y^{\varepsilon}_s|^p)ds
 \nonumber\\
 \leq\!\!\!\!\!\!\!\!&&\vare^{-p/2}C_{p,T}\left(1+\EE|\xi|^{p}+\EE|\zeta|^{p}\right).
\end{eqnarray*}
Similarly, due to (\ref{Bound Condition}), (\ref{E1}), (\ref{X1}) and (\ref{Y0}), it is easy to see that
\begin{eqnarray*}
\!\!\!\!\!\!\!\!&&\EE\left\{\sup_{t\in[0,T]}\left|\int^t_0 \EE \text{Tr}\left[\sigma\sigma^{*}(X^{\varepsilon}_s,\mathscr{L}_{ X^{\varepsilon}_{s}},Y^{\varepsilon}_s)\partial_z\partial_{\mu}\Phi(x,\mathscr{L}_{X^{\varepsilon}_{s}},y)(X^{\varepsilon}_s)\right]\mid_{\{x=X_{s}^{\varepsilon},y=Y^{\varepsilon}_{s}\}}ds\right|^p\right\}
 \nonumber\\
 \leq\!\!\!\!\!\!\!\!&&C_{p,T}\EE\int_0^T(1+|Y^{\varepsilon}_s|^{p})\big(1+(\EE|Y^{\varepsilon}_s|^4)^{p/2}\big)ds
 \nonumber\\
 \leq\!\!\!\!\!\!\!\!&&C_{p,T}\big(1+\EE|\zeta|^{2p}\big)
\end{eqnarray*}
and
\begin{eqnarray*}
 \!\!\!\!\!\!\!\!&&\EE\left\{\sup_{t\in[0,T]}\left|\int^t_0 \mathscr{L}_{1}(\mathscr{L}_{X^{\varepsilon}_{s}},Y^{\varepsilon}_{s})\Phi(X_{s}^{\varepsilon},\mathscr{L}_{X^{\varepsilon}_{s}},Y^{\varepsilon}_{s})ds\right|^p\right\}
 \nonumber\\
 \leq\!\!\!\!\!\!\!\!&&C_{p,T}\EE\int_0^T\left[1+
 \big(\EE|X^{\varepsilon}_{s}|^2\big)^{p/2}+|X^{\varepsilon}_{s}|^p+|Y^{\varepsilon}_{s}|^p\right](1+|Y^{\varepsilon}_s|^p)ds+C_{p,T}\EE\int_0^T(1+|Y^{\varepsilon}_s|^{3p})ds
 \nonumber\\
 \leq\!\!\!\!\!\!\!\!&&C_{p,T}(1+\EE|\zeta|^{3p})+C_{p,T}\int_0^T\EE|X^{\varepsilon}_s|^p ds+C_{p,T}\left[\EE\int_0^T|X^{\varepsilon}_{s}|^{3p/2}ds\right]^\frac{2}{3}\left[\EE\int_0^T|Y^{\varepsilon}_{s}|^{3p}ds\right]^\frac{1}{3}
\nonumber\\
 \leq\!\!\!\!\!\!\!\!&&\vare^{-p/2}C_{p,T}\left(1+\EE|\xi|^{3p/2}+\EE|\zeta|^{3p}\right).
\end{eqnarray*}
Then it turns out that
\begin{eqnarray}\label{es2}
\mathscr{R}_1^\varepsilon\leq C_{p,T}\varepsilon^{p/2}\left(1+\EE|\xi|^{3p/2}+\EE|\zeta|^{3p}\right).
\end{eqnarray}

For the term $\mathscr{R}_2^\varepsilon$. By the Burkholder-Davis-Gundy's inequality, we get
\begin{eqnarray}\label{es3}
\mathscr{R}_2^\varepsilon  \leq\!\!\!\!\!\!\!\!&& C_{p}\varepsilon^p \EE\left[\int^T_0 \|\partial_x \Phi(X_{s}^{\varepsilon},\mathscr{L}_{X^{\varepsilon}_{s}},Y_{s}^{\varepsilon})\|^2
 \|\sigma(X^{\varepsilon}_s,\mathscr{L}_{ X^{\varepsilon}_{s}},Y_{s}^{\varepsilon})\|^2 ds\right]^{p/2}
\nonumber\\
\leq\!\!\!\!\!\!\!\!&&C_{p,T}\varepsilon^p\int_0^T\EE\big( 1 +|Y^{\varepsilon}_{t}|^{2p}\big)dt
\nonumber\\
\leq\!\!\!\!\!\!\!\!&& C_{p,T}\varepsilon^p(1+\EE|\zeta|^{2p}).
\end{eqnarray}

By a similar argument above, one can easily obtain
\begin{equation}\label{es4}
\sum^7_{i=3}\mathscr{R}_{i}^\varepsilon\leq C_{p,T}\varepsilon^{p/2}(1+\EE|\zeta|^{2p}).
\end{equation}


Consequently, combining (\ref{es1}) and \eref{es2}-(\ref{es4}), we conclude the desired estimate (\ref{sfe}). The proof is complete.
\end{proof}

\vspace{1mm}
Based on the above fluctuation estimates,  the uniform moment estimate for the slow variable $X_{t}^{\varepsilon}$ of (\ref{eq4}) is derived in the following. Furthermore, we also establish the increment estimates of time for $X_{t}^{\varepsilon}$, which is important to prove the strong/weak averaging principle for the terms in equations (\ref{MP1}), (\ref{MP2}) and (\ref{tildeX}) below.
\begin{lemma}
Assume that the conditions in Theorem \ref{main result 1} hold. For any $T>0$ and $\EE|\xi|^{3p/2}+\EE|\zeta|^{3p}<\infty$ for some $p\geq 2$, then there exists a constant $C_{p,T}>0$ such that
\begin{equation}
\sup_{\varepsilon\in(0,1)}\mathbb{E}\left\{\sup_{t\in [0, T]}|X_{t}^{\varepsilon}|^{p}\right\}\leq C_{p,T}\left(1+\EE|\xi|^{3p/2}+\EE|\zeta|^{3p}\right).\label{X2}
\end{equation}
In addition, for any $0\leq t\leq t+h\leq T$,
\begin{equation}
\mathbb{E}|X_{t+h}^{\varepsilon}-X_{t}^{\varepsilon}|^{p}\leq C_{p,T}\left(1+\EE|\xi|^{3p/2}+\EE|\zeta|^{3p}\right)\left(h^{p/2}+\frac{h^p}{\varepsilon^{p/2}}\right).\label{COX}
\end{equation}
\end{lemma}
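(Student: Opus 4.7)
The plan is to bound each piece of the mild formulation
\[
X^{\varepsilon}_t = \xi + \int_0^t b(X^{\varepsilon}_s, \mathscr{L}_{X^{\varepsilon}_s}, Y^{\varepsilon}_s)\,ds + \frac{1}{\sqrt{\varepsilon}} \int_0^t K(X^{\varepsilon}_s, \mathscr{L}_{X^{\varepsilon}_s}, Y^{\varepsilon}_s)\,ds + \int_0^t \sigma(X^{\varepsilon}_s, \mathscr{L}_{X^{\varepsilon}_s}, Y^{\varepsilon}_s)\,dW_s
\]
separately. The decisive observation is that although the crude growth bound $|K|\le C(1+|Y^{\varepsilon}|)$ would produce an $\varepsilon^{-p/2}$ blow-up in the singular term (as in \eqref{X1}), the fluctuation estimate \eqref{sfe} of Lemma \ref{lem1} controls it $\varepsilon$-uniformly by $C_{p,T}(1+\EE|\xi|^{3p/2}+\EE|\zeta|^{3p})$, which is precisely the right-hand side of \eqref{X2}.

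For the $b$-integral, the Lipschitz property \eqref{A11} yields $|b(x,\mu,y)|\le C(1+|x|+|y|+\mathbb{W}_{2}(\mu,\delta_{0}))$, and since $\mathbb{W}_{2}(\mathscr{L}_{X^{\varepsilon}_s},\delta_{0})^{2}=\EE|X^{\varepsilon}_s|^{2}$, Hölder's inequality gives
\[
\EE\sup_{r\le t}\Bigl|\int_0^r b\,ds\Bigr|^p \leq C_{p,T}\int_0^t\bigl(1+\EE|X^{\varepsilon}_s|^p+\EE|Y^{\varepsilon}_s|^p\bigr)\,ds.
\]
For the stochastic integral, the Burkholder--Davis--Gundy inequality combined with \eqref{Bound Condition} and \eqref{Y0} gives
\[
\EE\sup_{r\le t}\Bigl|\int_0^r\sigma\,dW_s\Bigr|^p \leq C_{p,T}\bigl(1+\EE|\zeta|^p\bigr).
\]
Collecting the three contributions and absorbing $\int_0^t\EE|Y^{\varepsilon}_s|^p\,ds\le T\sup_{s}\EE|Y^{\varepsilon}_s|^p$ via \eqref{Y0}, the function $\psi(t):=\EE\sup_{r\le t}|X^{\varepsilon}_r|^p$ satisfies
\[
\psi(t) \leq C_{p,T}\bigl(1+\EE|\xi|^{3p/2}+\EE|\zeta|^{3p}\bigr) + C_{p,T}\int_0^t\psi(s)\,ds,
\]
and \eqref{X2} follows from Gronwall's lemma.

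The increment bound \eqref{COX} is handled by the same splitting over the interval $[t,t+h]$. For the $b$-integral, Hölder together with the just-proved \eqref{X2} produces a contribution of order $h^p$; the singular $K$-integral, controlled here merely by $|K|\le C(1+|Y^{\varepsilon}|)$ and Hölder, gives $h^p/\varepsilon^{p/2}$; and BDG on the stochastic integral yields $h^{p/2}$. Since $h\le T$ implies $h^p\le T^{p/2}h^{p/2}$, summing yields \eqref{COX}. Note that the fluctuation estimate is \emph{not} invoked here: over an interval of length $h$ the naive bound on $\tfrac{1}{\sqrt{\varepsilon}}\int_t^{t+h}K\,ds$ already delivers the prescribed $h^p/\varepsilon^{p/2}$ factor, which is the correct rate appearing on the right of \eqref{COX}.

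The main obstacle lies entirely in \eqref{X2}: any naive argument for the $\frac{1}{\sqrt{\varepsilon}}\int_0^t K\,ds$ term loses a factor of $\varepsilon^{-p/2}$, and the fluctuation estimate of Lemma \ref{lem1} -- which ultimately rests on the centering condition \ref{A4} and the regularity of the Poisson solution $\Phi$ from Proposition \ref{P3.6} -- is the ingredient that turns this singular integral into an $O(\sqrt{\varepsilon})$ oscillation, thereby rendering the moment bound uniform in $\varepsilon$ at the cost of the increased moment requirements $\EE|\xi|^{3p/2}+\EE|\zeta|^{3p}<\infty$.
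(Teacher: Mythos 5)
Your proof is correct and follows essentially the same route as the paper: bound the $b$- and $\sigma$-integrals via Hölder/Jensen and Burkholder--Davis--Gundy, control the singular $\frac{1}{\sqrt{\varepsilon}}\int_0^t K\,ds$ term uniformly in $\varepsilon$ via the fluctuation estimate of Lemma \ref{lem1}, and close with Gronwall; for the increment you rightly observe that over $[t,t+h]$ the naive growth bound $|K|\le C(1+|Y^{\varepsilon}|)$ already produces the prescribed $h^p/\varepsilon^{p/2}$ factor so the fluctuation estimate is not needed there. The paper formally invokes It\^{o}'s formula at the start but the resulting inequality (with the term $\varepsilon^{-p/2}\,\EE\sup_t|\int_0^t K\,ds|^p$ appearing verbatim) matches your direct estimate of the integral formulation, so the two arguments are the same in substance.
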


\begin{proof}
Applying It\^{o}'s formula, Burkholder-Davis-Gundy's inequality and (\ref{sfe}), we easily deduce that
\begin{eqnarray*}
\mathbb{E}\left\{\sup_{t\in [0, T]}|X_{t}^{\varepsilon}|^{p}\right\}\leq\!\!\!\!\!\!\!\!&&C_{p,T}(1+\EE|\xi|^p)+C_{p,T}\int_0^T\mathbb{E}|X_{t}^{\varepsilon}|^{p}dt+C_{p,T}\int_0^T\mathbb{E}|Y_{t}^{\varepsilon}|^{p}dt
\nonumber\\
\!\!\!\!\!\!\!\!&&
+\frac{C_p}{\varepsilon^{p/2}}\mathbb{E}\left\{\sup_{t\in [0, T]}\Big|\int_0^t K(X_{s}^{\varepsilon}, \mathscr{L}_{X_{s}^{\varepsilon}},Y_{s}^{\varepsilon})ds\Big|^p\right\}
\nonumber\\
\leq\!\!\!\!\!\!\!\!&& C_{p,T}\left(1+\EE|\xi|^{3p/2}+\EE|\zeta|^{3p}\right)+C_{p,T}\int_0^T\mathbb{E}|X_{t}^{\varepsilon}|^{p}dt.
\end{eqnarray*}
Then the Gronwall's lemma implies (\ref{X2}) holds.

Analogously,  it is straightforward that for any $0\leq t\leq t+h\leq T$,
\begin{eqnarray*}
\mathbb{E}|X_{t+h}^{\varepsilon}-X_{t}^{\varepsilon}|^{p}\leq\!\!\!\!\!\!\!\!&&C_p\mathbb{E}\left|\int_t^{t+h}b(X_{s}^{\varepsilon},\mathscr{L}_{X^{\varepsilon}_{s}},Y^{\varepsilon}_{s})ds\right|^p
+C_p\mathbb{E}\left|\int_t^{t+h}\sigma(X_{s}^{\varepsilon},\mathscr{L}_{X^{\varepsilon}_{s}},Y^{\varepsilon}_{s})dW_s^1\right|^p
\nonumber\\
\!\!\!\!\!\!\!\!&&
+\frac{C_p}{\varepsilon^{p/2}}\mathbb{E}\left|\int_{t}^{t+h}K(X_{s}^{\varepsilon}, \mathscr{L}_{X_{s}^{\varepsilon}},Y_{s}^{\varepsilon})ds\right|^p
\nonumber\\
\leq\!\!\!\!\!\!\!\!&&C_p h^{p/2}\left(1+ \sup_{t\in[0,T]}\EE|X_{t}^{\varepsilon}|^{p}+\sup_{t\in[0,T]}\EE|Y_{t}^{\varepsilon}|^{p}\right)+\frac{C_p}{\varepsilon^{p/2}}\mathbb{E}\left|\int_{t}^{t+h}(1+|Y_{s}^{\varepsilon}|)ds\right|^p
\nonumber\\
\leq\!\!\!\!\!\!\!\!&& C_{p,T}\left(1+\EE|\xi|^{3p/2}+\EE|\zeta|^{3p}\right)\left(h^{p/2}+\frac{h^p}{\varepsilon^{p/2}}\right).
\end{eqnarray*}
The proof is complete.

\end{proof}

\subsection{Tightness}\label{S4.2}

In this subsection, we intend to prove the tightness of the solution $\{X^\varepsilon\}_{\varepsilon>0}$ to equation (\ref{eq4}) in $C([0,T],\RR^n)$, then it has a weakly convergent subsequence. To this end, we recall the following necessary and sufficient condition of tightness (cf.~\cite{KS1}).
\begin{lemma}
For any $T>0$, the family $\{\Pi^\varepsilon\}_{\varepsilon\in(0,1)}$ is tight in $C([0,T];\RR^n)$ if and only if the following two conditions hold.

(i) There exists  a constant $r>0$ such that
\begin{equation}\label{t1}
\sup_{\varepsilon\in(0,1)}\EE|\Pi^\varepsilon_0|^r<\infty.
\end{equation}

(ii) There exist constants $r,\delta>0$ such that for any $0\leq t_1,t_2\leq T$,
\begin{equation}\label{t2}
\sup_{\varepsilon\in(0,1)}\EE|\Pi^\varepsilon_{t_2}-\Pi^\varepsilon_{t_1}|^r\leq C_T|t_2-t_1|^{1+\delta}.
\end{equation}

\end{lemma}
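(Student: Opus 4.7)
The statement is the classical moment-based tightness criterion for laws on the Polish space $C([0,T];\RR^n)$, so the plan is to reduce it to Prokhorov's theorem combined with the Arzel\`{a}-Ascoli characterization of relatively compact subsets of $C([0,T];\RR^n)$. Recall that a set $\mathcal{K}\subset C([0,T];\RR^n)$ has compact closure iff it is uniformly bounded and equicontinuous. Consequently, by Prokhorov, tightness of $\{\Pi^\varepsilon\}_{\varepsilon\in(0,1)}$ is equivalent to the conjunction of a uniform bound in probability on $\|\Pi^\varepsilon\|_\infty$ and a uniform modulus-of-continuity bound in probability, i.e.~for every $\eta>0$ there exists $M>0$ with $\sup_\varepsilon\PP(\|\Pi^\varepsilon\|_\infty>M)<\eta$ and a function $\rho(h)\to 0$ as $h\to 0$ with $\sup_\varepsilon\PP(w(\Pi^\varepsilon,h)>\eta)\leq\rho(h)$, where $w$ denotes the usual uniform modulus of continuity on $[0,T]$.

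For sufficiency, the plan is to combine (i) and (ii) with a Garsia-Rodemich-Rumsey (GRR) or equivalent dyadic chaining estimate. Applying GRR with $\Psi(u)=u^r$ and $p(u)=u^{(1+\delta/2)/r}$ to the increment bound (ii) yields, for any $0<\gamma<\delta/r$,
\[
\sup_{\varepsilon\in(0,1)}\EE\!\left[\sup_{0\leq s<t\leq T}\frac{|\Pi^\varepsilon_t-\Pi^\varepsilon_s|^r}{|t-s|^{\gamma r}}\right]\leq C_{T,\gamma}.
\]
Chebyshev's inequality then gives $\sup_\varepsilon\PP(w(\Pi^\varepsilon,h)>\eta)\leq C\,h^{\gamma r}/\eta^r$, which vanishes as $h\to 0$; this is the equicontinuity in probability. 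The initial moment bound (i), combined with the above H\"older-type pathwise control (used with $s=0$), yields $\sup_\varepsilon\EE\|\Pi^\varepsilon\|_\infty^r<\infty$, and another application of Chebyshev produces the required uniform bound in probability on $\|\Pi^\varepsilon\|_\infty$. This completes the sufficient direction.

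For the necessity direction, tightness of $\{\Pi^\varepsilon\}$ in $C([0,T];\RR^n)$ directly yields tightness of the marginals $\{\Pi^\varepsilon_0\}$ in $\RR^n$ (via continuity of the evaluation map $x\mapsto x(0)$) and a control-in-probability on the modulus of continuity through Arzel\`{a}-Ascoli; however, passing from such tightness statements to the moment-type bounds in (i) and (ii) would require additional uniform-integrability hypotheses. As stated here, the lemma is invoked from \cite{KS1} and used only in the sufficient direction in the subsequent sections (to convert the a priori estimates \eref{X2} and \eref{COX} into tightness of $\{X^\varepsilon\}$), so the substantive content lies in the ``if'' implication.

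The only nontrivial technical step is the GRR application (or an equivalent Kolmogorov-Chentsov dyadic argument) used to upgrade pointwise two-time moment bounds on increments into a uniform modulus-of-continuity estimate in probability; every other reduction in the above plan is a routine combination of Chebyshev, Prokhorov, and Arzel\`{a}-Ascoli.
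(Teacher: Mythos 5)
The paper does not prove this lemma: it is recalled as a classical criterion with a citation to \cite{KS1} (Karatzas--Shreve, Problem 2.4.11 together with Theorem 2.4.10) and is used only in the sufficient direction, exactly as you observe. Your plan --- reduce via Prokhorov and Arzel\`{a}--Ascoli to uniform boundedness plus equicontinuity in probability, and then upgrade the two-time moment bound (ii) to a uniform modulus-of-continuity control by a Garsia--Rodemich--Rumsey or dyadic Kolmogorov--Chentsov argument, finishing with Chebyshev --- is the standard textbook proof and matches the cited source. Your remark that the ``if and only if'' cannot be literally true is also correct: taking $\Pi^\varepsilon_t\equiv Z$ for a fixed heavy-tailed (e.g.~Cauchy) random variable $Z$ gives a trivially tight family that violates (i), so the lemma should be read as asserting only the ``if'' implication, which is all the paper invokes.

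There is one parameter slip in your GRR step. With $\Psi(u)=u^{r}$ and $p(u)=u^{\alpha}$ the chaining bound yields $|f(t)-f(s)|\leq C\,B^{1/r}\,|t-s|^{\alpha-2/r}$, so one needs $\alpha>2/r$, while finiteness of $\EE B$ under (ii) needs $1+\delta-\alpha r>-1$, i.e.~$\alpha<(2+\delta)/r$; hence $\alpha$ must lie in the nonempty interval $\left(\tfrac{2}{r},\tfrac{2+\delta}{r}\right)$. Your choice $p(u)=u^{(1+\delta/2)/r}$ has $\alpha=(1+\delta/2)/r<2/r$ whenever $\delta<2$, which would make the GRR integral diverge. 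Replacing it by, say, $\alpha=(2+\delta/2)/r$ gives the admissible H\"older exponent $\gamma=\alpha-2/r=\delta/(2r)>0$, after which the remainder of your argument (Chebyshev for equicontinuity in probability, and combining with (i) for uniform boundedness) goes through unchanged.
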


\vspace{0.3cm}
Recall the slow process in system (\ref{eq4})
\begin{equation}\label{eq6}
X^\varepsilon_t=\xi+I_1^\varepsilon(t)+I_2^\varepsilon(t)+I_3^\varepsilon(t),~t\in[0,T],
\end{equation}
where we denote
\begin{eqnarray*}
I_1^\varepsilon(t)=\!\!\!\!\!\!\!\!&&\int^t_0b(X^{\varepsilon}_s, \mathscr{L}_{X^{\varepsilon}_s}, Y^{\varepsilon}_s)ds,
\nonumber\\
I_2^\varepsilon(t)=\!\!\!\!\!\!\!\!&&\frac{1}{\sqrt{\varepsilon}}\int^t_0 K(X^{\varepsilon}_s, \mathscr{L}_{X^{\varepsilon}_s}, Y^{\varepsilon}_s)ds,
\nonumber\\
I_3^\varepsilon(t)=\!\!\!\!\!\!\!\!&&\int^t_0\sigma(X^{\varepsilon}_s,\mathscr{L}_{X^{\varepsilon}_s},Y^{\varepsilon}_s)d W_s.
\end{eqnarray*}

\begin{proposition}\label{th1}
Assume that the conditions in Theorem \ref{main result 1} hold. Then  $\{\mathscr{L}_{\Pi^\varepsilon}\}_{\varepsilon>0}$ is tight in $C([0,T];\RR^{4n})$, where $\Pi^\varepsilon:=(X^\varepsilon,I_1^\varepsilon,I_2^\varepsilon,I_3^\varepsilon)$.
\end{proposition}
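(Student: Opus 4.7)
The approach is to verify, for each of the four coordinates of $\Pi^\varepsilon$, the initial moment bound \eref{t1} and the increment estimate \eref{t2} from the preceding lemma; joint tightness in $C([0,T];\RR^{4n})$ then follows because marginal tightness of the coordinates is equivalent to joint tightness in a finite product of Polish spaces. Condition \eref{t1} is immediate since $\Pi^\varepsilon_0=(\xi,0,0,0)$ and $\xi\in L^9(\Omega;\RR^n)$.

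The two ``easy'' terms are handled by standard arguments. For $I_1^\varepsilon$, Jensen's inequality combined with the linear growth implicit in \eref{A11} and the uniform moment bounds \eref{X2} and \eref{Y0} give $\EE|I_1^\varepsilon(t_2)-I_1^\varepsilon(t_1)|^p\leq C_{p,T}h^p$, where $h:=t_2-t_1$. For $I_3^\varepsilon$, the Burkholder--Davis--Gundy inequality and the analogous growth of $\sigma$ yield $\EE|I_3^\varepsilon(t_2)-I_3^\varepsilon(t_1)|^p\leq C_{p,T}h^{p/2}$. Any $p>2$ then verifies \eref{t2} for both.

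The heart of the argument, and the main obstacle, is the fluctuation term $I_2^\varepsilon$: the prefactor $1/\sqrt{\varepsilon}$ rules out a direct $h$-estimate that is uniform in $\varepsilon$. The plan is to combine two complementary bounds. A \emph{direct estimate}, based on \eref{Bound Condition} and \eref{Y0}, gives
\begin{equation*}
\EE|I_2^\varepsilon(t_2)-I_2^\varepsilon(t_1)|^p \leq \frac{h^{p-1}}{\varepsilon^{p/2}}\int_{t_1}^{t_2}\EE|K(X^\varepsilon_s,\mathscr{L}_{X^\varepsilon_s},Y^\varepsilon_s)|^p\,ds \leq C_{p,T}\frac{h^p}{\varepsilon^{p/2}},
\end{equation*}
which is sharp when $h\leq\varepsilon$. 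A \emph{Poisson estimate} is obtained by applying It\^o's formula to $\Phi(X^\varepsilon_s,\mathscr{L}_{X^\varepsilon_s},Y^\varepsilon_s)$ on the sub-interval $[t_1,t_2]$ exactly as in the proof of Lemma \ref{lem1}, and using $-\mathscr{L}_2\Phi=K$, to obtain the decomposition
\begin{equation*}
I_2^\varepsilon(t_2)-I_2^\varepsilon(t_1)=\sqrt{\varepsilon}\bigl[\Phi(X^\varepsilon_{t_1},\mathscr{L}_{X^\varepsilon_{t_1}},Y^\varepsilon_{t_1})-\Phi(X^\varepsilon_{t_2},\mathscr{L}_{X^\varepsilon_{t_2}},Y^\varepsilon_{t_2})\bigr]+\sqrt{\varepsilon}\,A^\varepsilon+B^\varepsilon+\sqrt{\varepsilon}\,\De M^{1,\varepsilon}+\De M^{2,\varepsilon},
\end{equation*}
where $A^\varepsilon$ and $B^\varepsilon$ are Lebesgue integrals over $[t_1,t_2]$ whose integrands have uniform-in-$\varepsilon$ $L^p$ moments thanks to \eref{E1}, \eref{E2}, \eref{Bound Condition}, \eref{Y0} and \eref{X2}, and $\De M^{i,\varepsilon}:=M^{i,\varepsilon}_{t_2}-M^{i,\varepsilon}_{t_1}$ are the increments of the martingales \eref{ma1}, \eref{ma2}. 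Bounding the boundary term in $L^p$ by $C\sqrt{\varepsilon}$, the drift integrals by $C\sqrt{\varepsilon}\,h$ and $Ch$, and invoking BDG for $C\sqrt{\varepsilon h}$ and $C\sqrt{h}$ on the two martingale parts, one collects
\begin{equation*}
\EE|I_2^\varepsilon(t_2)-I_2^\varepsilon(t_1)|^p \leq C_{p,T}\bigl(\varepsilon^{p/2}+h^{p/2}\bigr),
\end{equation*}
which is sharp when $h>\varepsilon$. Taking the minimum of the two bounds,
\begin{equation*}
\EE|I_2^\varepsilon(t_2)-I_2^\varepsilon(t_1)|^p \leq C_{p,T}\min\!\left(\frac{h^p}{\varepsilon^{p/2}},\;\varepsilon^{p/2}+h^{p/2}\right)\leq C_{p,T}\,h^{p/2}
\end{equation*}
for every $\varepsilon,h>0$: indeed, if $h\leq\varepsilon$ then $h^p/\varepsilon^{p/2}=h^{p/2}(h/\varepsilon)^{p/2}\leq h^{p/2}$, while if $h>\varepsilon$ then $\varepsilon^{p/2}\leq h^{p/2}$. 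Choosing $p=4$ (compatible with the hypotheses $\xi\in L^9,\zeta\in L^{18}$, since Lemma \ref{lem1} then requires $3p/2=6$ and $3p=12$) gives $\EE|I_2^\varepsilon(t_2)-I_2^\varepsilon(t_1)|^4\leq C_T h^2$, verifying \eref{t2} with $\delta=1$.

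Finally, the $X^\varepsilon$ increment follows from \eref{eq6} and the triangle inequality, so all four coordinates of $\Pi^\varepsilon$ satisfy the tightness criterion and the proposition follows. The only genuinely non-trivial step is the $I_2^\varepsilon$ estimate, where the ``minimum of direct and Poisson bounds'' trick is essential to produce an $\varepsilon$-uniform $h^{1+\delta}$ bound.
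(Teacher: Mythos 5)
Your proof is correct but handles the crucial $I_2^\varepsilon$ coordinate by a genuinely different route from the paper. The paper retains the full Poisson decomposition $I_2^\varepsilon=I_{21}^\varepsilon+I_{22}^\varepsilon+I_{23}^\varepsilon+I_{24}^\varepsilon+M^{2,\varepsilon}$ of \eref{es7} and treats the delicate piece $I_{21}^\varepsilon$ --- which contains the $O(1)$ term $J_{21}^\varepsilon(t)=\int_0^t\EE[K\partial_\mu\Phi]\,ds$ --- not by an increment estimate but by proving $I_{21}^\varepsilon\to 0$ in probability in $C([0,T];\RR^n)$: the centering condition \ref{A4} feeds into a Khasminskii-type time discretization (a minor revision of Lemma \ref{lem3} with $\Delta=\varepsilon^{2/3}$) to obtain $\EE\sup_t|J_{21}^\varepsilon(t)|\to 0$, while the other ingredients of $I_{21}^\varepsilon$ are $O(\sqrt{\varepsilon})$; tightness of that coordinate then follows because a family converging in probability to a deterministic limit is tight, and only $I_{22}^\varepsilon,\ldots,M^{2,\varepsilon}$ are handled via \eref{t2}. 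Your ``minimum of the direct bound $h^p/\varepsilon^{p/2}$ and the Poisson bound $\varepsilon^{p/2}+h^{p/2}$'' interpolation avoids the averaging lemma entirely and yields the cleaner, $\varepsilon$-uniform modulus estimate $\EE|I_2^\varepsilon(t_2)-I_2^\varepsilon(t_1)|^p\leq C_Th^{p/2}$ for the whole fluctuation term --- which in fact sharpens \eref{COX}, where the paper keeps the non-uniform $h^p/\varepsilon^{p/2}$ remainder. So for tightness alone your path is more elementary; the paper's heavier route is not wasted because the statement $I_{21}^\varepsilon\to0$ in probability is reused verbatim in Step 1 of the proof of Theorem \ref{main result 2}. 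One small bookkeeping point: the integrand of the $\mathscr{L}_1\Phi$ piece inside your $A^\varepsilon$ grows like $|X^\varepsilon_s|(1+|Y^\varepsilon_s|)+|Y^\varepsilon_s|^3$, so its uniform $L^4$ moment already invokes \eref{X2} at order six and hence needs $\xi\in L^9,\zeta\in L^{18}$ rather than the $L^6,L^{12}$ your parenthetical cites; the stated hypotheses cover this, so the argument stands as written.
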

\begin{proof}
 For proving the tightness of $\{\mathscr{L}_{X^\varepsilon}\}_{\varepsilon>0}$, it is sufficient to show that  $\{I_1^\varepsilon\}_{\varepsilon>0},\{I_2^\varepsilon\}_{\varepsilon>0}$ and $\{I_2^\varepsilon\}_{\varepsilon>0}$ satisfy the criterions (\ref{t1}) and (\ref{t2}), respectively. Note that (\ref{t1}) is a direct consequence of the a priori estimates (\ref{Y0}), (\ref{sfe}) and (\ref{X2}), we only need to verify the criterion (\ref{t2}).

By (\ref{Y0}), (\ref{X2}) and the Burkholder-Davis-Gundy's inequality,
\begin{eqnarray*}
\EE|I_1^\varepsilon(t_2)-I_1^\varepsilon(t_1)|^6\leq\!\!\!\!\!\!\!\!&&|t_2-t_1|^5\EE\left[\int^{t_2}_{t_1}\big|b(X^{\varepsilon}_s, \mathscr{L}_{X^{\varepsilon}_s}, Y^{\varepsilon}_s)\big|^6ds\right]
\nonumber\\
\leq\!\!\!\!\!\!\!\!&&|t_2-t_1|^5\sup_{s\in[0,T]} \EE\big|b(X^{\varepsilon}_s, \mathscr{L}_{X^{\varepsilon}_s}, Y^{\varepsilon}_s)\big|^6
\nonumber\\
\leq\!\!\!\!\!\!\!\!&&C_{T}|t_2-t_1|^6(1+\EE|\xi|^{9}+\EE|\zeta|^{18})
\end{eqnarray*}
and
\begin{eqnarray*}
\EE|I_3^\varepsilon(t_2)-I_3^\varepsilon(t_1)|^6\leq\!\!\!\!\!\!\!\!&&|t_2-t_1|^2\EE\left[\int^{t_2}_{t_1}\big\|\sigma(X^{\varepsilon}_s, \mathscr{L}_{X^{\varepsilon}_s}, Y^{\varepsilon}_s)\big\|^6ds\right]
\nonumber\\
\leq\!\!\!\!\!\!\!\!&&C_{T}|t_2-t_1|^3(1+\EE|\zeta|^{6}),
\end{eqnarray*}
thus it is easy to see that (\ref{t2}) holds for $\{I_1^\varepsilon\}_{\varepsilon>0}$ and $\{I_3^\varepsilon\}_{\varepsilon>0}$.

As for $\{I_2^\varepsilon\}_{\varepsilon>0}$, we recall that
\begin{eqnarray}
I_2^{\varepsilon}(t)=\!\!\!\!\!\!\!\!&& \sqrt{\varepsilon}\Bigg\{\Phi(\xi,\mathscr{L}_{\xi},\zeta)-\Phi(X_{t}^{\varepsilon},\mathscr{L}_{X^{\varepsilon}_{t}},Y^{\varepsilon}_{t})\nonumber\\
\!\!\!\!\!\!\!\!&&
+\int^t_0 \EE\left[b(X^{\varepsilon}_s,\mathscr{L}_{ X^{\varepsilon}_{s}}, Y^{\varepsilon}_s)\partial_{\mu}\Phi(x,\mathscr{L}_{X^{\varepsilon}_{s}},y)(X^{\varepsilon}_s)\right]\mid_{\{x=X_{s}^{\varepsilon},y=Y^{\varepsilon}_{s}\}}ds\nonumber\\
\!\!\!\!\!\!\!\!&&+\frac{1}{\sqrt{\varepsilon}}\int^t_0\EE\left[K(X^{\varepsilon}_s,\mathscr{L}_{ X^{\varepsilon}_{s}}, Y^{\varepsilon}_s)\partial_{\mu}\Phi(x,\mathscr{L}_{X^{\varepsilon}_{s}},y)(X^{\varepsilon}_s)\right]\mid_{\{x=X_{s}^{\varepsilon},y=Y^{\varepsilon}_{s}\}}ds\nonumber\\
\!\!\!\!\!\!\!\!&&+\int^t_0 \frac{1}{2}\EE \text{Tr}\left[\sigma\sigma^{*}(X^{\varepsilon}_s,\mathscr{L}_{ X^{\varepsilon}_{s}},Y^{\varepsilon}_s)\partial_z\partial_{\mu}\Phi(x,\mathscr{L}_{X^{\varepsilon}_{s}},y)(X^{\varepsilon}_s)\right]\mid_{\{x=X_{s}^{\varepsilon},y=Y^{\varepsilon}_{s}\}}ds\nonumber\\
\!\!\!\!\!\!\!\!&&+\int^t_0 \mathscr{L}_{1}(\mathscr{L}_{X^{\varepsilon}_{s}},Y^{\varepsilon}_{s})\Phi(X_{s}^{\varepsilon},\mathscr{L}_{X^{\varepsilon}_{s}},Y^{\varepsilon}_{s})ds+ M^{1,\varepsilon}_t\Bigg\}\nonumber\\
\!\!\!\!\!\!\!\!&&
+\int^t_0  \partial_x \Phi_K(X_{s}^{\varepsilon},\mathscr{L}_{X^{\varepsilon}_{s}},Y^{\varepsilon}_{s}) ds+\int^t_0  \partial_y \Phi_h(X_{s}^{\varepsilon},\mathscr{L}_{X^{\varepsilon}_{s}},Y^{\varepsilon}_{s}) ds\nonumber\\
\!\!\!\!\!\!\!\!&&+\int^t_0\text{Tr}\left[\partial^2_{xy}\Phi_{\sigma g^{*}}\right](X^{\varepsilon}_{s},\mathscr{L}_{X^{\varepsilon}_{s}},Y^{\varepsilon}_{s})ds +M^{2,\varepsilon}_t
\nonumber\\
=:\!\!\!\!\!\!\!\!&&I_{21}^{\varepsilon}(t)+I_{22}^{\varepsilon}(t)+I_{23}^{\varepsilon}(t)+I_{24}^{\varepsilon}(t)+M^{2,\varepsilon}_t,\label{es7}
\end{eqnarray}
where $M^{i,\varepsilon}_t$, $i=1,2$, are defined by (\ref{ma1}) and (\ref{ma2}), respectively.

 Firstly,  we deal with the term
$$
J^{\varepsilon}_{21}(t):=\int^t_0\EE\left[K(X^{\varepsilon}_s,\mathscr{L}_{ X^{\varepsilon}_{s}}, Y^{\varepsilon}_s)\partial_{\mu}\Phi(x,\mathscr{L}_{X^{\varepsilon}_{s}},y)(X^{\varepsilon}_s)\right]\mid_{\{x=X_{s}^{\varepsilon},y=Y^{\varepsilon}_{s}\}}ds
$$
in $I^{\varepsilon}_{21}$. To do this, we construct a copy of $(X^{\varepsilon}, Y^{\varepsilon})$ by $(\breve{X}^{\varepsilon}, \breve{Y}^{\varepsilon})$ which is define on another probability space. Since $K$ satisfies the centering condition \ref{A4}, using \eref{E1} and by a minor revision in Lemma \ref{lem3}, it easy to check that for any small enough $\Delta>0$,
\begin{eqnarray*}
&&\EE\left\{\sup_{t\in[0,T]}\left|\int^t_0\left[K(X^{\varepsilon}_s,\mathscr{L}_{ X^{\varepsilon}_{s}}, Y^{\varepsilon}_s)\partial_{\mu}\Phi(\breve X_{s}^{\varepsilon},\mathscr{L}_{X^{\varepsilon}_{s}},\breve Y^{\varepsilon}_s)(X^{\varepsilon}_s)\right]ds\right|\right\}\\
\leq\!\!\!\!\!\!\!\!&&  C_T(1+{\EE|\xi|^{9}}+\EE|\zeta|^{18})\Bigg\{\frac{\Delta}{\sqrt{\varepsilon}}+\sqrt{\Delta}\\
&&+\frac{\varepsilon}{\Delta}\max_{0\leq j\leq[T/\Delta]-1}\left[\int_{0} ^{\frac{\Delta}{\varepsilon}} \int_{r} ^{\frac{\Delta}{\varepsilon}}(1+|\breve Y^{\varepsilon}_{s\Delta+j\Delta}|+|\breve Y^{\varepsilon}_{r\Delta+j\Delta}|)\left(e^{-\frac{(s-r)\beta}{2}}+\sqrt{\varepsilon}\right)dsdr \right]^{1/2}\Bigg\},
\end{eqnarray*}
which implies that
\begin{eqnarray*}
\!\!\!\!\!\!\!\!&&\EE\left\{\sup_{t\in[0,T]}\left|J^{\varepsilon}_{21}(t)\right|\right\}
\\
=\!\!\!\!\!\!\!\!&& \breve{\EE}\left\{\sup_{t\in[0,T]}\left|\EE\int^t_0\left[K(X^{\varepsilon}_s,\mathscr{L}_{ X^{\varepsilon}_{s}}, Y^{\varepsilon}_s)\partial_{\mu}\Phi(\breve X_{s}^{\varepsilon},\mathscr{L}_{X^{\varepsilon}_{s}},\breve Y^{\varepsilon}_s)(X^{\varepsilon}_s)\right]ds\right|\right\}\\
\leq\!\!\!\!\!\!\!\!&&  \breve{\EE}\EE\left\{\sup_{t\in[0,T]}\left|\int^t_0\left[K(X^{\varepsilon}_s,\mathscr{L}_{ X^{\varepsilon}_{s}}, Y^{\varepsilon}_s)\partial_{\mu}\Phi(\breve X_{s}^{\varepsilon},\mathscr{L}_{X^{\varepsilon}_{s}},\breve Y^{\varepsilon}_s)(X^{\varepsilon}_s)\right]ds\right|\right\}\\
\leq\!\!\!\!\!\!\!\!&&  C_T(1+{\EE|\xi|^{9}}+\EE|\zeta|^{18})\left(\frac{\Delta}{\sqrt{\varepsilon}}+\sqrt{\Delta}+\sqrt{\varepsilon}+\frac{\sqrt{\varepsilon}}{\sqrt{\Delta}}\right).
\end{eqnarray*}
Then taking $\Delta=\varepsilon^{\frac{2}{3}}$, it follows that
\begin{eqnarray}
\lim_{\varepsilon\rightarrow 0}\EE\left\{\sup_{t\in[0,T]}\left|J^{\varepsilon}_{21}(t)\right|\right\}=0.\label{CI22}
\end{eqnarray}
In addition, by the conditions $\ref{A1}$, $\ref{A3}$, Proposition \ref{P3.6}, \eref{X1} and (\ref{Y0}), we have
\begin{equation}\label{es23}
\EE\left\{\sup_{t\in[0,T]}|I_{21}^{\varepsilon}(t)-J_{21}^{\varepsilon}(t)|^6\right\}\leq C_{T}\varepsilon^3(1+\EE|\xi|^{9}+\EE|\zeta|^{18}).
\end{equation}
Combining (\ref{CI22}) and (\ref{es23}), it immediately obtain that $I_{21}^{\varepsilon}$ converges to $0$ in probability in $C([0,T];\RR^{n})$, as $\varepsilon\to 0$.

Hence, it suffices to verify that the remaining terms in (\ref{es7}) also satisfy  the criterion (\ref{t2}).
Indeed, according to the condition \eref{Bound Condition} and (\ref{Y0}),  we get
\begin{eqnarray*}
\EE|I_{22}^\varepsilon(t_2)-I_{22}^\varepsilon(t_1)|^6\leq\!\!\!\!\!\!\!\!&&|t_2-t_1|^5\int^{t_2}_{t_1}\EE\big(1+|Y^{\varepsilon}_{s}|^{12}\big)ds
\nonumber\\
\leq\!\!\!\!\!\!\!\!&&C_T|t_2-t_1|^6\big(1+\EE|\zeta|^{12}\big).
\end{eqnarray*}
Then  it is easy see that $\{I_{22}^\varepsilon\}_{\varepsilon>0}$ fulfills the criterion (\ref{t2}). The terms $I_{23}^{\varepsilon}$, $I_{24}^{\varepsilon}$ and $M^{2,\varepsilon}_t$ satisfy  the criterion (\ref{t2}), whose proofs are omitted since they can be handled similarly.  The proof is complete.
\end{proof}

\section{Proof of main results}\label{sec5}
In this section, we shall give the detailed proofs of our main results, i.e., Theorems \ref{main result 1} and \ref{main result 2}. By the discussion in the introduction, {\it martingale problem approach} (see subsection 4.1) and {\it martingale characteriziation} (see subsection 4.2) are used to characterize the limiting process, respectively.

\subsection{Proof of Theorem \ref{main result 1}}
 It is sufficient to prove that  any sequence $\{\varepsilon_k\}_{k\geq1}$ has a  subsequence which we keep denoting by $\{\varepsilon_{k}\}_{k\geq1}$ such that $X^{\varepsilon_{k}}$ converges weakly to the solution $X$ of (\ref{eq5}) in $C([0,T];\RR^n)$, as $k\to\infty$. If it is proved that the limit is unique in the distribution sense, then it will follow that the whole family $X^{\varepsilon}$ converges weakly in $C([0,T];\RR^n)$ as $\vare\rightarrow 0$. We will divide the proof into three steps.

\textbf{Step 1}: We shall apply the martingale problem approach to characterize the limiting process $X$. Let $\Psi_{t_0}(\cdot)$
be a bounded continuous function on $C([0,T],\RR^{n})$ which is measurable with respect to the sigma-field $\sigma(\varphi_t, \varphi\in C([0,T],\RR^{n}), t\leq t_0 )$. We intend to show that for any $t_0 \geq 0$, any $\Psi_{t_0}(\cdot)$ and $U\in C^3_b(\RR^n)$, the following assertion holds:
\begin{eqnarray}
\EE\left[\left(U(X_t)-U(X_{t_0})-\int^t_{t_0}L_{\mu_s}U(X_s)ds\right)\Psi_{t_0}(X)\right]=0,\quad t\geq t_0,\label{Mar11}
\end{eqnarray}
where $\mu_s:=\mathscr{L}_{X_s}$ and
$$L_{\mu}U(x):=\sum^n_{i=1}\partial_{x_i}U(x)\Theta_i(x,\mu)+\frac{1}{2}\sum^{n}_{i=1}\sum^n_{j=1}\partial_{x_i}\partial_{x_j} U(x)(\Sigma\Sigma^{\ast})_{ij}(x,\mu).$$
Then by the uniqueness of the martingale problem to stochastic system \eref{eq5} (cf.~e.g.~\cite[Corollary 4.1]{LM}),  the limiting process $X$ satisfies equation \eref{eq5}.

Note that the Skorohod representation theorem yields that it is possible to construct a probability space $(\hat{\Omega},\hat{\mathscr{F}},\hat{\PP})$, and there exists a sequence $\{\hat{X}^{\varepsilon_{k}}, \hat W^{k}\}_{k\geq 1}$ and $\hat{X}$ on this space such that
$$
(X^{\varepsilon_{k}}, W)\sim (\hat{X}^{\varepsilon_{k}}, \hat W^{k}), \quad  X\sim\hat{X},
$$
where $\sim$ means they have the same distribution in $C([0,T];\RR^{n})$. Moreover, we have $\hat{\PP}\text{-a.s.}$, as $k\to \infty$
\begin{eqnarray}
&&\hat{X}^{\varepsilon_{k}}\rightarrow \hat{X}\quad \text{in}\quad C([0,T];\RR^{n}),\label{hatXC}\\
&&\hat W^{k}\rightarrow\hat{W}, \quad \text{in}\quad C([0,T];\RR^{n}).\nonumber
\end{eqnarray}
Meanwhile, $(\hat{X}^{\varepsilon_{n_k}}, \hat W^{k})$ solve the following stochastic systems
\begin{equation}\left\{\begin{array}{l}\label{ChangeEquation}
\displaystyle
d \hat{X}^{\varepsilon_k}_t = b(\hat{X}^{\varepsilon_k}_t, \mathscr{L}_{\hat{X}^{\varepsilon_k}_t}, \hat{Y}^{\varepsilon_k}_t)dt+\frac{1}{\sqrt{\varepsilon_k}}K(\hat{X}^{\varepsilon_{k}}_t, \mathscr{L}_{\hat{X}^{\varepsilon_{k}}_t}, \hat{Y}^{\varepsilon_k}_t)dt+\sigma(\hat{X}^{\varepsilon_k}_t, \mathscr{L}_{\hat{X}^{\varepsilon_k}_t},\hat{Y}^{\varepsilon_k}_t)d \hat W^{k}_t,
\\
\displaystyle d \hat{Y}^{\varepsilon_k}_t =\frac{1}{\varepsilon_k}f(\hat Y^{\varepsilon_k}_t, \mathscr{L}_{\hat{X}^{\varepsilon_k}_t}, \hat{Y}^{\varepsilon_k}_t)dt+\frac{1}{\sqrt{\varepsilon_k}}h(\hat Y^{\varepsilon_k}_t, \mathscr{L}_{\hat{X}^{\varepsilon_k}_t}, \hat{Y}^{\varepsilon_k}_t)dt+\frac{1}{\sqrt{\varepsilon_k}}g( \hat{X}^{\varepsilon_k}_t, \mathscr{L}_{\hat{X}^{\varepsilon_k}_t}, \hat{Y}^{\varepsilon_k}_t)d \hat W^{k}_t,\\
\displaystyle \hat{X}^{\varepsilon_k}_0=\hat{\xi},~\hat{Y}^{\varepsilon_k}_0=\hat{\zeta},
\end{array}\right.
\end{equation}
where $\hat{\xi}$ and $\hat{\zeta}$ are two random variables on $(\hat{\Omega},\hat{\mathscr{F}},\hat{\PP})$ which satisfy $\hat{\xi}\sim \xi$ and $\hat{\zeta}\sim \zeta$.

\vspace{1mm}
It is easy to check the a priori estimates \eref{Y0} and (\ref{X2}) hold for $\hat{X}^{\varepsilon_{k}}$ and $\hat{Y}^{\varepsilon_{k}}$ respectively, i.e., for any $T>0$ and $\hat \EE|\hat \xi|^{3p/2}+\hat \EE|\hat \zeta|^{3p}<\infty$ for some $p\geq 2$, then there exists a constant $C_{p,T}>0$ such that
\begin{eqnarray}
&&\sup_{k\in\mathbb{N}_{+}}\hat{\mathbb{E}}\left\{\sup_{t\in [0, T]}|\hat{X}_{t}^{\varepsilon_{k}}|^{p}\right\}\leq C_{p,T}(1+\hat{\EE}|\xi|^{3p/2}+\hat{\EE}|\hat \zeta|^{3p}),\label{X22}\\
&&\sup_{k\in\mathbb{N}_{+}}\sup_{t\geq 0}\hat{\mathbb{E}}|\hat Y_{t}^{\varepsilon_k}|^{p}\leq C_{p}\left(1+\hat{\EE}|\hat\zeta|^{p}\right),\label{hatY0}
\end{eqnarray}
where $\hat{E}$ is the expectation on $(\hat{\Omega},\hat{\mathscr{F}},\hat{\PP})$. Then by \eref{hatXC} and \eref{X22}, the Vitali's convergence theorem (cf.~\cite[Theorem 4.5.4]{B1}) implies that for any $p'<p$,
\begin{equation}\label{vita1}
\hat{\EE}\left\{\sup_{t\in[0,T]}|\hat{X}^{\varepsilon_{k}}_t-\hat{X}_t|^{p'}\right\}\to 0,~\text{as}~k\to\infty.
\end{equation}

Now, in order to prove (\ref{Mar11}), it is sufficient to prove that
\begin{eqnarray}
\hat{\EE}\left[\left(U(\hat{X}_t)-U(\hat{X}_s)-\int^t_{t_0}L_{\hat{\mu}_s}U(\hat{X}_s)ds\right)\Psi_{t_0}(\hat{X})\right]=0,\quad t\geq t_0,\label{Martingle method}
\end{eqnarray}
where $\hat{\mu}_s:=\mathscr{L}_{\hat{X}_s}$.

\vspace{1mm}
\textbf{Step 2}: Recall the solution $\Phi$ of Poisson equation (\ref{PE}), applying It\^{o}'s formula we obtain that for any $t\geq t_0$,
\begin{eqnarray*}
&&\langle\Phi(\hat{X}_{t}^{\varepsilon_k},\mathscr{L}_{\hat{X}_{t}^{\varepsilon_k}},\hat{Y}_{t}^{\varepsilon_k}), \nabla U(\hat{X}_{t}^{\varepsilon_k})\rangle
=\langle\Phi(\hat{X}_{t_0}^{\varepsilon_k},\mathscr{L}_{\hat{X}^{\varepsilon_k}_{t_0}},\hat{Y}^{\varepsilon_k}_{t_0}), \nabla U(\hat{X}^{\vare_k}_{t_0})\rangle\\
&&+\sum^n_{i=1}\int^t_{t_0} \hat{\EE}\left[b(\hat{X}^{\varepsilon_k}_s,\mathscr{L}_{ \hat{X}^{\varepsilon_k}_{s}}, \hat{Y}^{\varepsilon_k}_s)\partial_{\mu}\Phi^i(x,\mathscr{L}_{\hat{X}^{\varepsilon_k}_{s}},y)(\hat{X}^{\varepsilon_k}_s)\right]\partial _{x_i}U(x)\mid_{\{x=\hat{X}_{s}^{\varepsilon_k},y=\hat{Y}^{\varepsilon_k}_{s}\}}ds\\
&&+\sum^n_{i=1}\int^t_{t_0} \hat{\EE}\left[\frac{1}{\sqrt{\varepsilon_k}}K(\hat{X}^{\varepsilon_k}_s,\mathscr{L}_{ \hat{X}^{\varepsilon_k}_{s}}, \hat{Y}^{\varepsilon_k}_s)\partial_{\mu}\Phi^i(x,\mathscr{L}_{\hat{X}^{\varepsilon_k}_{s}},y)(\hat{X}^{\varepsilon_k}_s)\right]\partial _{x_i}U(x)\mid_{\{x=\hat{X}_{s}^{\varepsilon_k},y=\hat{Y}^{\varepsilon_k}_{s}\}}ds\\
&&+\sum^n_{i=1}\int^t_{t_0} \frac{1}{2}\hat{\EE} \text{Tr}\left[(\sigma\sigma^{*})(\hat{X}^{\varepsilon_k}_s,\mathscr{L}_{ \hat{X}^{\varepsilon_k}_{s}}, \hat{Y}^{\varepsilon_k}_s)\partial_z\partial_{\mu}\Phi^i(x,\mathscr{L}_{\hat{X}^{\varepsilon_k}_{s}},y)(\hat{X}^{\varepsilon_k}_s)\right]\partial _{x_i}U(x)\mid_{\{x=\hat{X}_{s}^{\varepsilon_k},y=\hat{Y}^{\varepsilon_k}_{s}\}}ds\\
&&+\int^t_{t_0} \mathscr{L}_{1}(\mathscr{L}_{\hat{X}^{\varepsilon_k}_{s}},\hat{Y}^{\varepsilon_k}_{s})(\Phi\nabla U)(\hat{X}_{s}^{\varepsilon_k},\mathscr{L}_{\hat{X}^{\varepsilon_k}_{s}},\hat{Y}^{\varepsilon_k}_{s})ds\\
&&+\frac{1}{\sqrt{\varepsilon_k}}\sum^n_{i=1}\int^t_{t_0}\partial_{x_i} U(\hat{X}_{s}^{\varepsilon_k})\left\{\partial_x \Phi^i_K(\hat{X}_{s}^{\varepsilon_k},\mathscr{L}_{\hat{X}^{\varepsilon_k}_{s}},\hat{Y}^{\varepsilon_k}_{s})+\partial_y \Phi^i_h(\hat{X}_{s}^{\varepsilon_k},\mathscr{L}_{\hat{X}^{\varepsilon_k}_{s}},\hat{Y}^{\varepsilon_k}_{s}) \right.\\
&&\quad\quad\quad\quad\quad\quad\quad\quad\quad\quad\quad\quad+\left.\text{Tr}\left[\partial^2_{xy}\Phi^i_{\sigma g^{*}}(\hat{X}^{\varepsilon_k}_{s},\mathscr{L}_{\hat{X}^{\varepsilon_k}_{s}},\hat{Y}^{\varepsilon_k}_{s})\right]\right\}ds          \\
&&+\frac{1}{\sqrt{\varepsilon_k}}\sum^n_{i=1}\sum^n_{j=1}\int^t_{t_0} \partial _{x_i}\partial_{x_j} U(\hat{X}^{\vare_k}_s) (K\otimes\Phi)_{ij}(\hat{X}_{s}^{\varepsilon_k},\mathscr{L}_{\hat{X}^{\varepsilon_k}_{s}},\hat{Y}^{\varepsilon_k}_{s})ds\\
&&+\frac{1}{\sqrt{\varepsilon_k}}\sum^n_{i=1}\sum^n_{j=1}\int^t_{t_0} \partial _{x_i}\partial_{x_j}U(\hat{X}_{s}^{\varepsilon_k})\left[(\sigma g^{*})\partial_{y}\Phi\right]_{ij}(\hat{X}^{\varepsilon_k}_{s},\mathscr{L}_{\hat{X}^{\varepsilon_k}_{s}},\hat{Y}^{\varepsilon_k}_{s})ds          \\
&&+\frac{1}{\varepsilon_k}\int^t_{t_0} \langle \mathscr{L}_{2}(\hat{X}_{s}^{\varepsilon_k},\mathscr{L}_{\hat{X}^{\varepsilon_k}_{s}})\Phi(\hat{X}_{s}^{\varepsilon_k},\mathscr{L}_{\hat{X}^{\varepsilon_k}_{s}},\hat{Y}^{\varepsilon_k}_{s}), \nabla U(\hat{X}^{\vare_k}_s)\rangle ds\\
&&+\int^t_{t_0}\langle\partial_x(\Phi\nabla U)(\hat{X}_{s}^{\varepsilon_k},\mathscr{L}_{\hat{X}^{\varepsilon_k}_{s}},\hat{Y}^{\varepsilon_k}_{s}), \sigma(\hat{X}_{s}^{\varepsilon_k},\mathscr{L}_{\hat{X}^{\varepsilon_k}_{s}},\hat{Y}^{\varepsilon_k}_{s})d\hat W^{k}_s\rangle\\
&&+\frac{1}{\sqrt{\vare_{k}}}\int^t_{t_0}\langle\partial_y(\Phi\nabla U)(\hat{X}_{s}^{\varepsilon_k},\mathscr{L}_{\hat{X}^{\varepsilon_k}_{s}},\hat{Y}^{\varepsilon_k}_{s}), g(\hat{X}_{s}^{\varepsilon_k},\mathscr{L}_{\hat{X}^{\varepsilon_k}_{s}},\hat{Y}^{\varepsilon_k}_{s})d\hat W^{k}_s\rangle,
\end{eqnarray*}
where $(\Phi\nabla U)(x,\mu,y):=\sum^n_{i=1}\Phi^i(x,\mu,y)\partial_{x_i}U(x)$. Define
\begin{eqnarray*}
R^{\vare_k}_{U}=\!\!\!\!\!\!\!\!&&\langle\Phi(\hat{X}_{t_0}^{\varepsilon_k},\mathscr{L}_{\hat{X}^{\varepsilon_k}_{t_0}},\hat{Y}^{\varepsilon_k}_{t_0}), \nabla U(\hat{X}^{\vare_k}_{t_0})\rangle-\langle\Phi(\hat{X}_{t}^{\varepsilon_k},\mathscr{L}_{\hat{X}_{t}^{\varepsilon_k}},\hat{Y}_{t}^{\varepsilon_k}), \nabla U(\hat{X}_{t}^{\varepsilon_k})\rangle\\
&&+\sum^n_{i=1}\int^t_{t_0} \hat{\EE}\left[b(\hat{X}^{\varepsilon_k}_s,\mathscr{L}_{ \hat{X}^{\varepsilon_k}_{s}}, \hat{Y}^{\varepsilon_k}_s)\partial_{\mu}\Phi^i(x,\mathscr{L}_{\hat{X}^{\varepsilon_k}_{s}},y)(\hat{X}^{\varepsilon_k}_s)\right]\partial _{x_i}U(x)\mid_{\{x=\hat{X}_{s}^{\varepsilon_k},y=\hat{Y}^{\varepsilon_k}_{s}\}}ds\\
&&+\sum^n_{i=1}\int^t_{t_0} \hat{\EE}\left[\frac{1}{\sqrt{\varepsilon_k}}K(\hat{X}^{\varepsilon_k}_s,\mathscr{L}_{ \hat{X}^{\varepsilon_k}_{s}}, \hat{Y}^{\varepsilon_k}_s)\partial_{\mu}\Phi^i(x,\mathscr{L}_{\hat{X}^{\varepsilon_k}_{s}},y)(\hat{X}^{\varepsilon_k}_s)\right]\partial _{x_i}U(x)\mid_{\{x=\hat{X}_{s}^{\varepsilon_k},y=\hat{Y}^{\varepsilon_k}_{s}\}}ds\\
&&+\sum^n_{i=1}\int^t_{t_0} \frac{1}{2}\hat{\EE} \text{Tr}\left[(\sigma\sigma^{*})(\hat{X}^{\varepsilon_k}_s,\mathscr{L}_{ \hat{X}^{\varepsilon_k}_{s}}, \hat{Y}^{\varepsilon_k}_s)\partial_z\partial_{\mu}\Phi^i(x,\mathscr{L}_{\hat{X}^{\varepsilon_k}_{s}},y)(\hat{X}^{\varepsilon_k}_s)\right]\partial _{x_i}U(x)\mid_{\{x=\hat{X}_{s}^{\varepsilon_k},y=\hat{Y}^{\varepsilon_k}_{s}\}}ds\\
&&+\int^t_{t_0} \mathscr{L}_{1}(\mathscr{L}_{\hat{X}^{\varepsilon_k}_{s}},\hat{Y}^{\varepsilon_k}_{s})(\Phi\nabla U)(\hat{X}_{s}^{\varepsilon_k},\mathscr{L}_{\hat{X}^{\varepsilon_k}_{s}},\hat{Y}^{\varepsilon_k}_{s})ds\\
&&+\int^t_{t_0}\langle\partial_x(\Phi\nabla U)(\hat X_{s}^{\varepsilon_k},\mathscr{L}_{\hat X^{\varepsilon_k}_{s}},\hat Y^{\varepsilon_k}_{s}), \sigma(\hat X_{s}^{\varepsilon_k},\mathscr{L}_{\hat X^{\varepsilon_k}_{s}},\hat Y^{\varepsilon_k}_{s})d\hat W^{k}_s\rangle.
\end{eqnarray*}
Notice that
\begin{eqnarray*}
&&2\sum^n_{i=1}\sum^n_{j=1}\frac{1}{\sqrt{\varepsilon_k}}\int^t_{t_0} \partial _{x_i}\partial_{x_j} U(\hat{X}^{\vare_k}_s) (K\otimes\Phi)_{ij}(\hat{X}_{s}^{\varepsilon_k},\mathscr{L}_{\hat{X}^{\varepsilon_k}_{s}},\hat{Y}^{\varepsilon_k}_{s})ds\\
=\!\!\!\!\!\!\!\!&&\sum^n_{i=1}\sum^n_{j=1}\frac{1}{\sqrt{\varepsilon_k}}\int^t_{t_0} \partial _{x_i}\partial_{x_j} U(\hat{X}^{\vare_k}_s) \left[(K\otimes\Phi)+(K\otimes\Phi)^{\ast}\right]_{ij}(\hat{X}_{s}^{\varepsilon_k},\mathscr{L}_{\hat{X}^{\varepsilon_k}_{s}},\hat{Y}^{\varepsilon_k}_{s})ds
\end{eqnarray*}
and
\begin{eqnarray*}
&&2\sum^n_{i=1}\sum^n_{j=1}\int^t_{t_0} \partial _{x_i}\partial_{x_j}U(\hat{X}_{s}^{\varepsilon_k})\left[(\sigma g^{*})\partial_{y}\Phi\right]_{ij}(\hat{X}^{\varepsilon_k}_{s},\mathscr{L}_{\hat{X}^{\varepsilon_k}_{s}},\hat{Y}^{\varepsilon_k}_{s})ds\\
=\!\!\!\!\!\!\!\!&&\sum^n_{i=1}\sum^n_{j=1}\int^t_{t_0} \partial _{x_i}\partial_{x_j}U(\hat{X}_{s}^{\varepsilon_k})\left(\left[(\sigma g^{*})\partial_{y}\Phi\right]+\left[(\sigma g^{*})\partial_{y}\Phi\right]^{\ast}\right)_{ij}(\hat{X}^{\varepsilon_k}_{s},\mathscr{L}_{\hat{X}^{\varepsilon_k}_{s}},\hat{Y}^{\varepsilon_k}_{s})ds.
\end{eqnarray*}
Thus, it follows that
\begin{eqnarray*}
&&\frac{1}{\sqrt{\varepsilon_k}}\int^t_{t_0} \langle\nabla U(\hat X^{\vare_k}_s),K(\hat{X}^{\varepsilon_k}_s,\mathscr{L}_{ \hat X^{\varepsilon_k}_{s}},\hat Y^{\varepsilon_k}_s)\rangle ds\\
=\!\!\!\!\!\!\!\!&&-\frac{1}{\sqrt{\varepsilon_k}}\int^t_{t_0} \langle\nabla U(\hat X^{\vare_k}_s),\mathscr{L}_{2}(\hat X_{s}^{\varepsilon_k},\mathscr{L}_{\hat X^{\varepsilon_k}_{s}})\Phi(\hat X_{s}^{\varepsilon_k},\mathscr{L}_{\hat X^{\varepsilon_k}_{s}},\hat Y^{\varepsilon_k}_{s})\rangle ds\\
=\!\!\!\!\!\!\!\!&&\sqrt{\vare_k}R^{\vare_k}_{U}+\sum^n_{i=1}\int^t_{t_0} \partial_{x_i} U(\hat{X}_{s}^{\varepsilon_k})\left\{\partial_x \Phi^i_K(\hat{X}_{s}^{\varepsilon_k},\mathscr{L}_{\hat{X}^{\varepsilon_k}_{s}},\hat{Y}^{\varepsilon_k}_{s})+\partial_y \Phi^i_h(\hat{X}_{s}^{\varepsilon_k},\mathscr{L}_{\hat{X}^{\varepsilon_k}_{s}},\hat{Y}^{\varepsilon_k}_{s}) \right.\\
&&\quad\quad\quad\quad\quad\quad\quad\quad\quad\quad\quad\quad+\left.\text{Tr}\left[\partial^2_{xy}\Phi^i_{\sigma g^{*}}(\hat{X}^{\varepsilon_k}_{s},\mathscr{L}_{\hat{X}^{\varepsilon_k}_{s}},\hat{Y}^{\varepsilon_k}_{s})\right]\right\}ds          \\
&&+\frac{1}{2}\sum^n_{i=1}\sum^n_{j=1}\int^t_{t_0} \partial_{x_i}\partial_{x_j} U(\hat{X}^{\vare_k}_s) \left[(K\otimes\Phi)+(K\otimes\Phi)^{\ast}\right]_{ij}(\hat{X}_{s}^{\varepsilon_k},\mathscr{L}_{\hat{X}^{\varepsilon_k}_{s}},\hat{Y}^{\varepsilon_k}_{s})ds\\
&&+\frac{1}{2}\sum^n_{i=1}\sum^n_{j=1}\int^t_{t_0} \partial_{x_i}\partial_{x_j}U(\hat{X}_{s}^{\varepsilon_k})\left(\left[(\sigma g^{*})\partial_{y}\Phi\right]+\left[(\sigma g^{*})\partial_{y}\Phi\right]^{\ast}\right)_{ij}(\hat{X}^{\varepsilon_k}_{s},\mathscr{L}_{\hat{X}^{\varepsilon_k}_{s}},\hat{Y}^{\varepsilon_k}_{s})ds          \\
&&+\int^t_{t_0}\langle\partial_y(\Phi\nabla U)(\hat X_{s}^{\varepsilon_k},\mathscr{L}_{\hat X^{\varepsilon_k}_{s}},\hat Y^{\varepsilon_k}_{s}), g(\hat X_{s}^{\varepsilon_k},\mathscr{L}_{\hat X^{\varepsilon_k}_{s}},\hat Y^{\varepsilon_k}_{s})d\hat W^{k}_s\rangle.
\end{eqnarray*}
By the formulation above and applying It\^{o}'s formula again, we have
\begin{eqnarray*}
U(\hat X^{\vare_k}_t)=\!\!\!\!\!\!\!\!&& U(\hat X^{\vare_k}_{t_0})\!+\!\sum^n_{i=1}\int^t_{t_0}\!\!\partial_{x_i}U(\hat X^{\vare_k}_s)b_i(\hat X^{\vare_k}_s,\mathscr{L}_{ \hat X^{\vare_k}_{s}}, \hat Y^{\varepsilon_k}_s) ds\\
&&+\int^t_{t_0} \langle\nabla U(\hat X^{\vare_k}_s),\frac{1}{\sqrt{\varepsilon_k}}K(\hat X^{\vare_k}_s,\mathscr{L}_{\hat X^{\vare_k}_s}, \hat Y^{\varepsilon_k}_s)\rangle ds\nonumber\\
&&+\frac{1}{2}\sum^n_{i=1}\sum^n_{j=1} \int^t_{t_0}\left[\partial_{x_i}\partial_{x_j}U(\hat X^{\vare_k}_s)(\sigma\sigma^{\ast})_{ij}(\hat X^{\varepsilon_k}_s,\mathscr{L}_{\hat X^{\varepsilon_k}_{s}}, \hat Y^{\varepsilon_k}_s)\right] ds\nonumber\\
\!\!\!\!\!\!\!\!&&+ \int^t_{t_0}\langle\nabla U(\hat X^{\vare_k}_s),\sigma(\hat X^{\varepsilon_k}_s,\mathscr{L}_{\hat X^{\varepsilon_k}_{s}}, \hat Y^{\varepsilon_k}_s)d \hat W^{k}_s\rangle\\
=\!\!\!\!\!\!\!\!&& U(\hat X^{\vare_k}_{t_0})+\sum^n_{i=1}\int^t_{t_0}\partial_{x_i}U(\hat X^{\vare_k}_s)\left\{b_i(\hat X^{\vare_k}_s,\mathscr{L}_{ \hat X^{\vare_k}_{s}}, \hat Y^{\varepsilon_k}_s)+\partial_x\Phi^i_K(\hat{X}_{s}^{\varepsilon_k},\mathscr{L}_{\hat{X}^{\varepsilon_k}_{s}},\hat{Y}^{\varepsilon_k}_{s})\right.\\
&&\left.\quad\quad+\partial_y \Phi^i_h(\hat{X}_{s}^{\varepsilon_k},\mathscr{L}_{\hat{X}^{\varepsilon_k}_{s}},\hat{Y}^{\varepsilon_k}_{s})+\text{Tr}\left[\partial^2_{xy}\Phi^i_{\sigma g^{*}}\right](\hat{X}^{\varepsilon_k}_{s},\mathscr{L}_{\hat{X}^{\varepsilon_k}_{s}},\hat{Y}^{\varepsilon_k}_{s})\right\} ds          \\
&&+\frac{1}{2}\sum^n_{i=1}\sum^n_{j=1}\int^t_{t_0} \partial_{x_i}\partial_{x_j} U(\hat X^{\vare}_s)\left\{\left[(K\otimes\Phi)+(K\otimes\Phi)^{\ast}\right]_{ij}(\hat{X}_{s}^{\varepsilon_k},\mathscr{L}_{\hat{X}^{\varepsilon_k}_{s}},\hat{Y}^{\varepsilon_k}_{s})  \right.  \\
&&\left.+\Big(\left[(\sigma g^{*})\partial_{y}\Phi\right]+\left[(\sigma g^{*})\partial_{y}\Phi\right]^{\ast}\Big)_{ij}(\hat{X}^{\varepsilon_k}_{s},\mathscr{L}_{\hat{X}^{\varepsilon_k}_{s}},\hat{Y}^{\varepsilon_k}_{s})+(\sigma\sigma^{\ast})_{ij}(\hat X^{\varepsilon_k}_s,\mathscr{L}_{\hat X^{\varepsilon_k}_{s}},\hat Y^{\varepsilon_k}_s)\right\}ds\\
&&+\int^t_{t_0}\langle\partial_y(\Phi\nabla U)(\hat X_{s}^{\varepsilon_k},\mathscr{L}_{\hat X^{\varepsilon_k}_{s}},\hat Y^{\varepsilon_k}_{s}), g(\hat X_{s}^{\varepsilon_k},\mathscr{L}_{\hat X^{\varepsilon_k}_{s}},\hat Y^{\varepsilon_k}_{s})d\hat W^{k}_s\rangle\nonumber\\
\!\!\!\!\!\!\!\!&&+ \int^t_{t_0}\langle\nabla U(\hat X^{\vare_k}_s),\sigma(\hat X^{\varepsilon_k}_s,\mathscr{L}_{\hat X^{\varepsilon_k}_{s}}, \hat Y^{\varepsilon_k}_s)d \hat W^{k}_s\rangle+ \sqrt{\vare_k}R^{\vare_k}_{U}.
\end{eqnarray*}
Obviously, $\lim_{k\rightarrow \infty}\hat \EE \left[(\Gamma^{\vare_k}- \sqrt{\vare_k}R^{\vare_k}_{U})\Psi_{t_0}(\hat X^{\vare_k})\right]=0$, where
\begin{eqnarray*}
\Gamma^{\vare_k}:=\!\!\!\!\!\!\!\!&&U(\hat X^{\vare_k}_t)-U(\hat X^{\vare_k}_{t_0})-\sum^n_{i=1}\int^t_{t_0}\partial_{x_i}U(\hat X^{\vare_k}_s)\left\{b_i(\hat X^{\vare_k}_s,\mathscr{L}_{ \hat X^{\vare_k}_{s}}, \hat Y^{\varepsilon_k}_s)+\partial_x \Phi^i_K(\hat{X}_{s}^{\varepsilon_k},\mathscr{L}_{\hat{X}^{\varepsilon_k}_{s}},\hat{Y}^{\varepsilon_k}_{s})\right.\\
&&\left.\quad\quad\quad\quad+\partial_y \Phi^i_h(\hat{X}_{s}^{\varepsilon_k},\mathscr{L}_{\hat{X}^{\varepsilon_k}_{s}},\hat{Y}^{\varepsilon_k}_{s})+\text{Tr}\left[\partial^2_{xy}\Phi^i_{\sigma g^{*}}\right](\hat{X}^{\varepsilon_k}_{s},\mathscr{L}_{\hat{X}^{\varepsilon_k}_{s}},\hat{Y}^{\varepsilon_k}_{s})\right\} ds          \\
&&-\frac{1}{2}\sum^n_{i=1}\sum^n_{j=1}\int^t_{t_0} \partial_{x_i}\partial_{x_j} U(\hat X^{\vare_k}_s)\left\{\left[(K\otimes\Phi)+(K\otimes\Phi)^{\ast}\right]_{ij}(\hat{X}_{s}^{\varepsilon_k},\mathscr{L}_{\hat{X}^{\varepsilon_k}_{s}},\hat{Y}^{\varepsilon_k}_{s})  \right.  \\
&&\left.\quad+\Big(\left[(\sigma g^{*})\partial_{y}\Phi\right]+\left[(\sigma g^{*})\partial_{y}\Phi\right]^{\ast}\Big)_{ij}(\hat{X}^{\varepsilon_k}_{s},\mathscr{L}_{\hat{X}^{\varepsilon_k}_{s}},\hat{Y}^{\varepsilon_k}_{s})+(\sigma\sigma^{\ast})_{ij}(\hat X^{\varepsilon_k}_s,\mathscr{L}_{\hat X^{\varepsilon_k}_{s}},\hat Y^{\varepsilon_k}_s)\right\}ds.
\end{eqnarray*}
By \eref{X22}, \eref{hatY0}, \eref{CI22} and Proposition \ref{P3.6}, it is easy to see that
\begin{eqnarray*}
\lim_{k\rightarrow \infty}\sqrt{\vare_k}\hat \EE\left[R^{\vare_k}_{U}\Psi_{t_0}(\hat X^{\vare_k})\right]=0.
\end{eqnarray*}

Thus it remains to prove that
$$\lim_{k\rightarrow \infty}\hat{\EE}\left[\Gamma^{\vare_k}\Psi_{t_0}(\hat X^{\vare_k})\right]=\hat \EE \left[\Gamma\Psi_{t_0}(\hat X)\right],$$
where $\Gamma:=U(\hat X_t)-U(\hat X_{t_0})-\int^t_{t_0}L_{\hat{\mu}_s}U(\hat X_s)ds $, then \eref{Martingle method} holds obviously.

\textbf{Step 3}: Note that $\hat{X}^{\varepsilon_k}$ converges to $\hat{X}$ in $C([0,T];\RR^n)$ $\PP$-a.s., it is sufficient to prove that for any $1\leq i,j\leq n$,
\begin{eqnarray}
\!\!\!\!\!\!\!\!&&\hat \EE\Bigg|\int^t_{t_0}\!\partial_{x_i}U(\hat X^{\vare_k}_s)\Big\{b_i(\hat X^{\vare_k}_s,\mathscr{L}_{ \hat X^{\vare_k}_{s}}, \hat Y^{\varepsilon_k}_s)\!+\!\partial_x \Phi^i_K(\hat{X}_{s}^{\varepsilon_k},\mathscr{L}_{\hat{X}^{\varepsilon_k}_{s}},\hat{Y}^{\varepsilon_k}_{s})\nonumber\\
&&\quad\quad+\partial_y \Phi^i_h(\hat{X}_{s}^{\varepsilon_k},\mathscr{L}_{\hat{X}^{\varepsilon_k}_{s}},\hat{Y}^{\varepsilon_k}_{s})+\text{Tr}\left[\partial^2_{xy}\Phi^i_{\sigma g^{*}}\right](\hat{X}^{\varepsilon_k}_{s},\mathscr{L}_{\hat{X}^{\varepsilon_k}_{s}},\hat{Y}^{\varepsilon_k}_{s})\Big\}ds\nonumber\\
&&\quad-\int^t_{t_0} \partial_{x_i}U(\hat X^{\vare_k}_s)\left\{ \bar{b}_i(\hat{X}_s,\mathscr{L}_{\hat{X}_{s}})+\overline{\partial_{x}\Phi^i_{K}}(\hat{X}_{s},\mathscr{L}_{\hat{X}_{s}})\right.\nonumber\\
&&\quad\quad+\overline{\partial_{y}\Phi^i_{h}}(\hat{X}_{s},\mathscr{L}_{\hat{X}_{s}})+\left.\overline{\text{Tr}\left[\partial^2_{xy}\Phi^i_{\sigma g^{*}}\right]}(\hat{X}_s,\mathscr{L}_{ \hat{X}_{s}})\right\} ds \Bigg|=0\label{MP1}
\end{eqnarray}
and
\begin{eqnarray}
&&\hat \EE\Bigg|\int^t_{t_0} \partial_{x_i}\partial_{x_j} U(\hat{X}^{\vare_k}_s)\Big\{\left[(K\otimes\Phi)+(K\otimes\Phi)^{\ast}\right]_{ij}(\hat{X}_{s}^{\varepsilon_k},\mathscr{L}_{\hat{X}^{\varepsilon_k}_{s}},\hat{Y}^{\varepsilon_k}_{s})\nonumber\\
&&\quad\quad+\Big(\left[(\sigma g^{*})\partial_{y}\Phi\right]+\left[(\sigma g^{*})\partial_{y}\Phi\right]^{\ast}\Big)_{ij}(\hat{X}^{\varepsilon_k}_s,\mathscr{L}_{\hat{X}^{\varepsilon_k}_{s}},\hat{Y}^{\varepsilon_k}_s) +(\sigma\sigma^{\ast})_{ij}(\hat{X}^{\varepsilon_k}_s,\mathscr{L}_{\hat{X}^{\varepsilon_k}_{s}}, \hat{Y}^{\varepsilon_k}_s)\Big\}ds\nonumber\\
&&\quad-\int^t_{t_0} \partial_{x_i}\partial_{x_j} U(\hat{X}_s)\Big\{\overline{(K\otimes\Phi)+(K\otimes\Phi)^{\ast}}_{ij}(\hat{X}_{s},\mathscr{L}_{\hat{X}_{s}})\nonumber\\
&&\quad\quad+\Big(\overline{\left[(\sigma g^{*})\partial_{y}\Phi\right]+\left[(\sigma g^{*})\partial_{y}\Phi\right]^{\ast}}\Big)_{ij}(\hat{X}_s,\mathscr{L}_{\hat{X}_{s}})+(\overline{\sigma\sigma^{\ast}})_{ij}(\hat{X}_s,\mathscr{L}_{\hat{X}_{s}})\Big\}ds\Bigg|=0.\label{MP2}
\end{eqnarray}

In order to prove \eref{MP1} and \eref{MP2}, we shall use the strategy of Khasminskii's time discretization. Firstly, we define for any $1\leq i,j\leq n$,
\begin{eqnarray*}
&&G_i(x,\mu,y)=\partial_{x_i} U(x)\left\{b_i(x,\mu,y)+\partial_x \Phi^i_K(x,\mu,y)+\partial_y \Phi^i_h(x,\mu,y)+\text{Tr}\left[\partial^2_{xy}\Phi^{i}_{\sigma g^{*}}\right](x,\mu,y)\right\},\\
&&H_{ij}(x,\mu,y)=\partial_{x_i}\partial_{x_j}U(x)\Big\{\left[(K\otimes\Phi)+(K\otimes\Phi)^{\ast}\right]_{ij}(x,\mu,y)\\
&&\quad\quad\quad\quad\quad\quad+\Big(\left[(\sigma g^{*})\partial_{y}\Phi\right]+\left[(\sigma g^{*})\partial_{y}\Phi\right]^{\ast}\Big)_{ij}(x,\mu,y)+(\sigma\sigma^{\ast})_{ij}(x,\mu,y)\Big\}.
\end{eqnarray*}
Then by \eref{E1}-\eref{E4}, it is easy check that
\begin{eqnarray*}
&&|G_i(x_1,\mu_1,y_1)-G_i(x_2,\mu_2,y_2)|\leq C\left[1+|x_1|+\mu_1(|\cdot|^2)^{1/2}+|y_1|^2+|y_2|^2\right]|x_1-x_2|\\
&&\quad\quad\quad\quad\quad\quad\quad\quad\quad\quad\quad\quad\quad\quad+C(1+|y_1|^2+|y_2|^2)(\mathbb{W}_2(\mu_1,\mu_2)+|y_1-y_2|),\\
&&|H_{ij}(x_1,\mu_1,y_1)-H_{ij}(x_2,\mu_2,y_2)|\leq C(1+|y_1|^2+|y_2|^2)\left(|x_1-x_2|+\mathbb{W}_2(\mu_1,\mu_2)+|y_1-y_2|\right).
\end{eqnarray*}
Finally, note that
\begin{eqnarray*}
&&\overline{G}_i(x,\mu)=\partial_{x_i} U(x)\left\{\bar{b}_i(x,\mu)+\overline{\partial_x \Phi^i_K}(x,\mu)+\overline{\partial_y \Phi^i_h}(x,\mu)+\overline{\text{Tr}\left[\partial^2_{xy}\Phi_{\sigma g^{*}}\right]}(x,\mu)\right\},\\
&&\overline{H_{ij}}(x,\mu)=\partial_{x_i}\partial_{x_j} U(x)\Big\{\overline{\left[(K\otimes\Phi)+(K\otimes\Phi)^{\ast}\right]}_{ij}\\
&&\quad\quad\quad\quad\quad+\Big(\overline{\left[(\sigma g^{*})\partial_{y}\Phi\right]+\left[(\sigma g^{*})\partial_{y}\Phi\right]^{\ast}}\Big)_{ij}(x,\mu)+\overline{(\sigma\sigma^{\ast})}_{ij}(x,\mu)\Big\}.
\end{eqnarray*}
As a consequence, it immediately obtain that \eref{MP1} and \eref{MP2} holds by \eref{Key3} in the appendix. The proof is complete. \hspace{\fill}$\Box$

\subsection{Proof of Theorem \ref{main result 2}}
 We also divide the proof into three steps.

\vspace{1mm}
\textbf{Step 1}: Recall the formulas \eref{eq6} and \eref{es7}, then we know that
\begin{equation*}\label{eq17}
X^{\varepsilon_{k}}_t=\xi+I_1^{\varepsilon_{k}}(t)+I_{21}^{\varepsilon_{k}}(t)+I_{22}^{\varepsilon_{k}}(t)+I_{23}^{\varepsilon_{k}}(t)+I_{24}^{\varepsilon_{k}}(t)+ M^{\varepsilon_{k}}_t,
\end{equation*}
where $M^{\varepsilon_{k}}_t$ is a local martingale which is defined as follows
\begin{eqnarray*}
&&M^{\varepsilon_{k}}_t=M^{2,\varepsilon_{k}}_t+I_{3}^{\varepsilon_{k}}(t)=\int^t_0 \big[\partial_y \Phi_g+\sigma\big](X_{s}^{\varepsilon_{k}},\mathscr{L}_{X^{\varepsilon_{k}}_{s}},Y_{s}^{\varepsilon_{k}}) dW_s.
\end{eqnarray*}
It is worth noting that by (\ref{CI22}) and (\ref{es23}), $I_{21}^{\varepsilon_{k}}$ converges to $0$ in probability in $C([0,T];\RR^{n})$, as $k\to\infty$.

In the sequel, we shall apply the martingale representation theorem to characterize the limiting process $X$. By Proposition \ref{th1}, for a subsequence $\{\varepsilon_{k}\}_{k\geq1}$, we suppose that the family
$$\{(X^{\varepsilon_{k}},I_1^{\varepsilon_{k}},I_{21}^{\varepsilon_{k}},I_{22}^{\varepsilon_{k}},I_{23}^{\varepsilon_{k}},I_{24}^{\varepsilon_{k}},M^{\varepsilon_{k}},W) \}_{k\geq1}$$
are weakly convergent in $C([0,T],\RR^{7n+d})$.

By the Skorohod representation theorem, it is possible to construct on a complete probability space $(\tilde{\Omega},\tilde{\mathscr{F}},\tilde{\PP})$, and there exists a sequence $\{(\tilde{X}^{\varepsilon_{k}},\tilde{I}_1^{\varepsilon_{k}},\tilde{I}_{21}^{\varepsilon_{k}},\tilde{I}_{22}^{\varepsilon_{k}},\tilde{I}_{23}^{\varepsilon_{k}},\tilde{I}_{24}^{\varepsilon_{k}},\tilde{M}^{\varepsilon_{k}},\tilde{W}^{k}) \}_{k\geq1}$ and $(\tilde{X},\tilde{I}_1,\tilde{I}_{21},\tilde{I}_{22},\tilde{I}_{23},\tilde{I}_{24},\tilde{M},\tilde{W}) $ on this space such that
$$
(X^{\varepsilon_{k}},I_1^{\varepsilon_{k}},I_{21}^{\varepsilon_{k}},I_{22}^{\varepsilon_{k}},I_{23}^{\varepsilon_{k}},I_{24}^{\varepsilon_{k}},M^{\varepsilon_{k}},W)\sim (\tilde{X}^{\varepsilon_{k}},\tilde{I}_1^{\varepsilon_{k}},\tilde{I}_{21}^{\varepsilon_{k}},\tilde{I}_{22}^{\varepsilon_{k}},\tilde{I}_{23}^{\varepsilon_{k}},,\tilde{I}_{23}^{\varepsilon_{k}},\tilde{M}^{\varepsilon_{k}},\tilde{W}^{k})
$$
and
$$(X,I_1,I_{21},I_{22},I_{23},I_{24},M,W)\sim (\tilde{X},\tilde{I}_1,\tilde{I}_{21},\tilde{I}_{22},\tilde{I}_{23},\tilde{I}_{24},\tilde{M},\tilde{W}).$$
Moreover, we have $\tilde{\PP}\text{-a.s.}$, as $k\to \infty$,
\begin{eqnarray}
&&\tilde{X}^{\varepsilon_{k}}\rightarrow \tilde{X}\quad \text{in}\quad C([0,T];\RR^{n});\label{Xasconvergence}\\
&&\tilde{I}_1^{\varepsilon_{k}}\rightarrow \tilde{I}_1\quad \text{in}\quad C([0,T];\RR^{n});\nonumber\label{I1asconvergence}\\
&&\tilde{I}_{21}^{\varepsilon_{k}}\rightarrow \tilde{I}_{21}\quad \text{in}\quad C([0,T];\RR^{n});\nonumber\label{I21asconvergence}\\
&&\tilde{I}_{22}^{\varepsilon_{k}}\rightarrow \tilde{I}_{22}\quad \text{in}\quad C([0,T];\RR^{n});\nonumber\label{I22asconvergence}\\
&&\tilde{I}_{23}^{\varepsilon_{k}}\rightarrow \tilde{I}_{23}\quad \text{in}\quad C([0,T];\RR^{n});\nonumber\label{I23asconvergence}\\
&&\tilde{I}_{24}^{\varepsilon_{k}}\rightarrow \tilde{I}_{24}\quad \text{in}\quad C([0,T];\RR^{n});\nonumber\label{I24asconvergence}\\
&&\tilde{W}^{k}\rightarrow\tilde{W} \quad \text{in}\quad C([0,T];\RR^{n}).\nonumber\label{Masconvergence}
\end{eqnarray}
Meanwhile, $(\tilde{X}^{\varepsilon_{k}}, \tilde{W}^{k})$ solve the following SDEs
\begin{equation}\left\{\begin{array}{l}\label{ChangeEquation2}
\displaystyle
d \tilde{X}^{\varepsilon_k}_t = b(\tilde{X}^{\varepsilon_k}_t, \mathscr{L}_{\tilde{X}^{\varepsilon_k}_t}, \tilde{Y}^{\varepsilon_k}_t)dt+\frac{1}{\sqrt{\varepsilon_k}}K(\tilde{X}^{\varepsilon_{k}}_t, \mathscr{L}_{\tilde{X}^{\varepsilon_{k}}_t}, \tilde{Y}^{\varepsilon_k}_t)dt+\sigma(\tilde{X}^{\varepsilon_k}_t, \mathscr{L}_{\tilde{X}^{\varepsilon_k}_t},\tilde{Y}^{\varepsilon_k}_t)d \tilde{W}^{k}_t,
\\
\displaystyle d \tilde{Y}^{\varepsilon_k}_t =\frac{1}{\varepsilon_k}f(\tilde Y^{\varepsilon_k}_t, \mathscr{L}_{\tilde{X}^{\varepsilon_k}_t}, \tilde{Y}^{\varepsilon_k}_t)dt+\frac{1}{\sqrt{\varepsilon_k}}h(\tilde Y^{\varepsilon_k}_t, \mathscr{L}_{\tilde{X}^{\varepsilon_k}_t}, \tilde{Y}^{\varepsilon_k}_t)dt+\frac{1}{\sqrt{\varepsilon_k}}g( \tilde{X}^{\varepsilon_k}_t, \mathscr{L}_{\tilde{X}^{\varepsilon_k}_t}, \tilde{Y}^{\varepsilon_k}_t)d \tilde W^{k}_t\\
\displaystyle \tilde{X}^{\varepsilon_k}_0=\tilde{\xi},~\tilde{Y}^{\varepsilon_k}_0=\tilde{\zeta},
\end{array}\right.
\end{equation}
and
\begin{eqnarray}
\tilde{X}^{\varepsilon_{k}}_t=\tilde{\xi}+\tilde{I}_1^{\varepsilon_{k}}(t)+\tilde{I}_{21}^{\varepsilon_{k}}(t)+\tilde{I}_{22}^{\varepsilon_{k}}(t)+\tilde{I}_{23}^{\varepsilon_{k}}(t)+\tilde{I}_{24}^{\varepsilon_{k}}(t)+ \tilde{M}^{\varepsilon_{k}}_t,\label{tildeX}
\end{eqnarray}
with $\tilde{I}_1^{\varepsilon_{k}},\tilde{I}_{21}^{\varepsilon_{k}},\tilde{I}_{22}^{\varepsilon_{k}},\tilde{I}_{23}^{\varepsilon_{k}},\tilde{I}_{24}^{\varepsilon_{k}}$ and $\tilde{M}^{\varepsilon_{k}}$ satisfying
\begin{eqnarray*}
&&\tilde{I}_1^{\varepsilon_k}(t)=\int^t_0b(\tilde{X}^{\varepsilon_k}_s, \mathscr{L}_{\tilde{X}^{\varepsilon_k}_s},\tilde{Y}^{\varepsilon_k}_s)ds,
\nonumber\\
&&\tilde{I}_{21}^{\varepsilon_k}(t)=\sqrt{\varepsilon_k}\Big\{\Phi(\tilde{\xi},\mathscr{L}_{\tilde{\xi}},\tilde{\zeta})-\Phi(\tilde{X}_{t}^{\varepsilon_k},\mathscr{L}_{\tilde{X}^{\varepsilon_k}_{t}},\tilde{Y}^{\varepsilon_k}_{t})\nonumber\\
\!\!\!\!\!\!\!\!&&
\quad\quad\quad\quad\quad\quad+\int^t_0 \tilde{\EE}\left[b(\tilde{X}^{\varepsilon_k}_s,\mathscr{L}_{ \tilde{X}^{\varepsilon_k}_{s}}, \tilde{Y}^{\varepsilon_k}_s)\partial_{\mu}\Phi(x,\mathscr{L}_{\tilde{X}^{\varepsilon_k}_{s}},y)(\tilde{X}^{\varepsilon_k}_s)\right]\mid_{\{x=\tilde{X}_{s}^{\varepsilon_k},y=\tilde{Y}^{\varepsilon_k}_{s}\}}ds\nonumber\\
\!\!\!\!\!\!\!\!&&
\quad\quad\quad\quad\quad\quad+\frac{1}{\sqrt{\varepsilon_k}}\int^t_0 \tilde{\EE}\left[K(\tilde{X}^{\varepsilon_k}_s,\mathscr{L}_{ \tilde{X}^{\varepsilon_k}_{s}}, \tilde{Y}^{\varepsilon_k}_s)\partial_{\mu}\Phi(x,\mathscr{L}_{\tilde{X}^{\varepsilon_k}_{s}},y)(\tilde{X}^{\varepsilon_k}_s)\right]\mid_{\{x=\tilde{X}_{s}^{\varepsilon_k},y=\tilde{Y}^{\varepsilon_k}_{s}\}}ds,\nonumber\\
\!\!\!\!\!\!\!\!&&\quad\quad\quad\quad\quad\quad+\int^t_0 \frac{1}{2}\tilde{\EE} \text{Tr}\left[\sigma\sigma^{*}(\tilde{X}^{\varepsilon_k}_s,\mathscr{L}_{ \tilde{X}^{\varepsilon_k}_{s}},\tilde{Y}^{\varepsilon_k}_s)\partial_z\partial_{\mu}\Phi(x,\mathscr{L}_{\tilde{X}^{\varepsilon_k}_{s}},y)(\tilde{X}^{\varepsilon_k}_s)\right]\mid_{\{x=\tilde{X}_{s}^{\varepsilon_k},y=\tilde{Y}^{\varepsilon_k}_{s}\}}ds\nonumber\\
\!\!\!\!\!\!\!\!&&\quad\quad\quad\quad\quad\quad+\int^t_0 \mathscr{L}_{1}(\mathscr{L}_{\tilde{X}^{\varepsilon_k}_{s}},\tilde{Y}^{\varepsilon_k}_{s})\Phi(\tilde{}_{s}^{\varepsilon_k},\mathscr{L}_{\tilde{X}^{\varepsilon_k}_{s}},\tilde{Y}^{\varepsilon_k}_{s})ds+\int^t_0 \partial_x \Phi_{\sigma}(\tilde{X}_{s}^{\varepsilon_k},\mathscr{L}_{\tilde{X}^{\varepsilon_k}_{s}},\tilde{Y}_{s}^{\varepsilon_k})d\tilde{W}^{k}_s\Big\},\nonumber\\
&&\tilde{I}_{22}^{\varepsilon_k}(t)=\int^t_0  \partial_x \Phi_K(\tilde{X}_{s}^{\varepsilon_k},\mathscr{L}_{\tilde{X}^{\varepsilon_k}_{s}},\tilde{Y}^{\varepsilon_k}_{s})ds,\\
&&\tilde{I}_{23}^{\varepsilon_k}(t)=\int^t_0  \partial_y \Phi_h(\tilde{X}_{s}^{\varepsilon_k},\mathscr{L}_{\tilde{X}^{\varepsilon_k}_{s}},\tilde{Y}^{\varepsilon_k}_{s})ds,\\
&&\tilde{I}_{24}^{\varepsilon_k}(t)=\int^t_0\text{Tr}\left[\partial^2_{xy}\Phi_{\sigma g^{*}}\right](\tilde{X}^{\varepsilon_k}_{s},\mathscr{L}_{\tilde{X}^{\varepsilon_k}_{s}},\tilde{Y}^{\varepsilon_k}_{s})ds,\\
&&\tilde{M}^{\varepsilon_k}_t=\int^t_0 (\partial_y \Phi_{g}+\sigma)(\tilde X_{s}^{\varepsilon},\mathscr{L}_{\tilde X^{\varepsilon_k}_{s}},\tilde Y_{s}^{\varepsilon_k})d\tilde{W}^{k}_s,
\end{eqnarray*}
where $\tilde{\xi}$ and $\tilde{\zeta}$ are two random variables satisfying $\tilde{\xi}\sim \xi$ and $\tilde{\zeta}\sim \zeta$, and $\tilde{\EE}$ is the expectation on $(\tilde{\Omega},\tilde{\mathscr{F}},\tilde{\PP})$.

It is easy to check that the  a priori estimates \eref{Y0} and (\ref{X2}) also hold for $\tilde{X}^{\varepsilon_{k}}$ and $\tilde{Y}^{\varepsilon_{k}}$, i.e.,
\begin{eqnarray}
&&\sup_{k\in\mathbb{N}_{+}}\tilde{\mathbb{E}}\left\{\sup_{t\in [0, T]}|\tilde{X}_{t}^{\varepsilon_{k}}|^{p}\right\}\leq C_{p,T}\left(1+\tilde{\EE}|\tilde \xi|^{3p/2}+\tilde{\EE}|\tilde \zeta|^{3p}\right),\label{tildeX1}\\
&&\sup_{k\in\mathbb{N}_{+}}\sup_{t\geq 0}\tilde{\mathbb{E}}|\tilde Y_{t}^{\varepsilon_k}|^{p}\leq C_{p}\left(1+\tilde{\EE}|\tilde\zeta|^{p}\right).\nonumber\label{tildeY0}
\end{eqnarray}
Then by \eref{Xasconvergence} and \eref{tildeX1}, applying the Vitali's convergence theorem (cf.~\cite[Theorem 4.5.4]{B1}), we deduce that for any $p'<p$,
\begin{equation}\label{vita2}
\tilde{\EE}\left\{\sup_{t\in[0,T]}|\tilde{X}^{\varepsilon_{k}}_t-\tilde{X}_t|^{p'}\right\}\to 0,~\text{as}~k\to\infty.\nonumber
\end{equation}

\vspace{0.2cm}
\textbf{Step 2}: In this step, we will identify the limiting processes $\tilde{I_1},\tilde{I}_{21}, \tilde{I}_{22},\tilde{I}_{23}$ respectively. On the one hand, it is easy to see that $\lim_{k\rightarrow \infty}\tilde{I}_{21}^{\varepsilon_k}(t)=0$. Furthermore, by Lemma \ref{lem3} in the appendix, it is easy to check that $\tilde \PP$-a.s.,
\begin{eqnarray*}
&&\lim_{k\rightarrow \infty}\tilde{I}_1^{\varepsilon_k}=\int_0^{\cdot}\bar{b}(\tilde{X}_s,\mathscr{L}_{\tilde{X}_s}) ds \quad \text{in}\quad C([0,T];\RR^{n}), \label{eq7}
\\
&&\lim_{k\rightarrow \infty}\tilde{I}_{22}^{\varepsilon_k}=\int_0^{\cdot}\overline{\partial_{x}\Phi_{K}}(\tilde{X}_s,\mathscr{L}_{\tilde{X}_s})ds \quad \text{in}\quad C([0,T];\RR^{n}),\label{eq8}
\\
&&\lim_{k\rightarrow \infty}\tilde{I}_{23}^{\varepsilon_k}=\int_0^{\cdot}\overline{\partial_{y}\Phi_{h}}(\tilde{X}_s,\mathscr{L}_{\tilde{X}_s})ds \quad \text{in}\quad C([0,T];\RR^{n}),\label{eq9}
\\
&&\lim_{k\rightarrow \infty}\tilde{I}_{24}^{\varepsilon_k}=\int_0^{\cdot}\overline{  \text{Tr}\left[\partial^2_{xy}\Phi_{\sigma g^{\ast}} \right] }(\tilde{X}_s,\mathscr{L}_{\tilde{X}_s})  ds \quad \text{in}\quad C([0,T];\RR^{n}).\label{eq10}
\end{eqnarray*}
As a consequence, it follows that
\begin{eqnarray}
\tilde{I}_1(t)+\sum^4_{i=1}\tilde{I}_{2i}(t)
=\!\!\!\!\!\!\!\!&&\int_0^{t}\overline{b+\partial_x \Phi_K+\partial_y \Phi_h+\text{Tr}\left[\partial^2_{xy}\Phi_{\sigma g^{*}}\right]}(\tilde{X}_s,\mathscr{L}_{\tilde{X}_s})  ds. \label{PtildeX}
\end{eqnarray}
If we have the following result:
\begin{eqnarray}
\tilde{M}_t=\int^t_0 \Big(\overline{(\partial_y\Phi_g+\sigma)(\partial_y\Phi_g+\sigma)^*(\tilde{X}_s,\mathscr{L}_{\tilde{X}_s})
}\Big)^{\frac{1}{2}}d\tilde{W}'_s,\label{Martingale identity}
\end{eqnarray}
whose detailed proof  is left in the \textbf{Step 3} below, where $\{\tilde{W}'_t\}_{t\geq 0}$ is a standard $n$-dimensional Brownian motion on probability space $(\tilde{\Omega},\tilde{\mathscr{F}},\tilde{\PP})$, then by taking limit $k\rightarrow \infty$ in both sides of \eref{tildeX}, using \eref{PtildeX} and \eref{Martingale identity}, we immediately obtain that $\tilde{X}$ is a weak solution of (\ref{eq62}).

Consequently, by the weak uniqueness of equation (\ref{eq62}), $\tilde{X}$  equals in distribution to the solution $X$ of equation (\ref{eq62}).

\vspace{1mm}
\textbf{Step 3}: In this step, we intend to prove \eref{Martingale identity}. Note that $\hat{M}^{\varepsilon_{k}}_t$ is the continuous local martingales on $(\tilde{\Omega},\tilde{\mathscr{F}},\tilde{\PP})$ with respect to filtration $\tilde{\mathscr{F}}^k_t:=\sigma\{\tilde{W}^{k}_s,\tilde{\xi},\tilde{\zeta}, s\leq t\}$, whose quadratic variational process is
\begin{eqnarray*}
\langle \tilde{M}^{\varepsilon_{k}}\rangle_t=\int_0^{t}\big[(\partial_y\Phi_g+\sigma)(\partial_y\Phi_g+\sigma)^*(\tilde{X}^{\vare_k}_s,\mathscr{L}_{\tilde{X}_s},\tilde{Y}^{\vare_k}_s)
\big]ds.
\end{eqnarray*}
Since $\tilde{M}^{\varepsilon_{k}}$ converges to $\tilde{M}$ $\PP$-a.s., in $C([0,T];\RR^n)$, as $k\to\infty$. According to the Vitali's convergence theorem, it is easy to deduce that  $\tilde{M}$ is the continuous local martingale on $(\tilde{\Omega},\tilde{\mathscr{F}},\hat{\PP})$ with respect to filtration $\tilde{\mathscr{F}}_t:=\sigma\{\tilde{W}_s,\tilde{\xi},\tilde{\zeta}, s\leq t\}$.

Now, we define
$$
R(x,\mu,y)=(\partial_y\Phi_g+\sigma)(\partial_y\Phi_g+\sigma)^*(x,\mu,y).
$$
Then it is easy check that
$$
\|R(x_1,\mu_1,y_1)-R(x_2,\mu_2,y_2)\|\leq C(1+|y_1|^2+|y_2|^2)\left(|x_1-x_2|+\mathbb{W}_2(\mu_1,\mu_2)+|y_1-y_2|\right).
$$
By \eref{Key2} in Lemma \ref{lem3}, it follows for any $t\in [0,T]$,
\begin{eqnarray*}
\lim_{k\rightarrow \infty}\tilde{\EE}\left\|\int_0^t R(\tilde{X}_{s}^{\varepsilon_k},\mathscr{L}_{\tilde{X}^{\varepsilon_k}_{s}},\tilde{Y}_{s}^{\varepsilon_k})ds-\int_0^{t}\bar{R}(\tilde{X}_s,\mathscr{L}_{\tilde{X}_s})  ds\right\|^2=0.\label{Key}
\end{eqnarray*}
Thus there exists a subsequence of $\{\varepsilon_k\}_{k\geq 1}$ which we keep denoting by $\{\varepsilon_{k}\}_{k\geq1}$ tending to $0$ such that
$$
\int_0^t R(\tilde{X}_{s}^{\varepsilon_k},\mathscr{L}_{\tilde{X}^{\varepsilon_k}_{s}},\tilde{Y}_{s}^{\varepsilon_k})ds\rightarrow\int_0^{t}\bar{R}(\tilde{X}_s,\mathscr{L}_{\tilde{X}_s})   ds,~\tilde{\PP}\text{-a.s.},~\text{as}~k\to\infty.
$$
Note that $\tilde{M}^{\varepsilon_k}_t\otimes \tilde{M}^{\varepsilon_k}_t-\int_0^{t}R(\tilde{X}_{r}^{\varepsilon_k},\mathscr{L}_{\tilde{X}^{\varepsilon_k}_{r}},\tilde{Y}_{r}^{\varepsilon_k})dr$ is a matrix-valued martingale with respect to filtration $\tilde{\mathscr{F}}^k_t$,  by the Vitali's convergence theorem, we have for any $0\leq s\leq t\leq T$,
\begin{eqnarray*}
\!\!\!\!\!\!\!\!&&\EE\left[\left(\tilde{M}_t\otimes \tilde{M}_t-\int_0^{t}\bar{R}(\tilde{X}_r,\mathscr{L}_{\tilde{X}_r})dr\right)-\left(\tilde{M}_s\otimes \tilde{M}_s-\int_0^{s}\bar{R}(\tilde{X}_r,\mathscr{L}_{\tilde{X}_r})dr\right) \big|\tilde{\mathscr{F}}_s\right]
\nonumber \\
=\!\!\!\!\!\!\!\!&&\lim_{k\to\infty}\EE\left[\left(\tilde{M}^{\varepsilon_k}_t\otimes \tilde{M}^{\varepsilon_k}_t-\int_0^{t}R(\tilde{X}_{r}^{\varepsilon_k},\mathscr{L}_{\tilde{X}^{\varepsilon_k}_{r}},\tilde{Y}_{r}^{\varepsilon_k})dr\right)\right.\\
&&\quad\quad\quad-\left.\left(\tilde{M}^{\varepsilon_k}_s\otimes  \tilde{M}^{2,\varepsilon_k}_s-\int_0^{s}R(\tilde{X}_{r}^{\varepsilon_k},\mathscr{L}_{\tilde{X}^{\varepsilon_k}_{r}},\tilde{Y}_{r}^{\varepsilon_k})dr\right)\big|\tilde{\mathscr{F}}^{k}_s\right]
\nonumber \\
=\!\!\!\!\!\!\!\!&&0,~\tilde \PP\text{-a.s.}.
\end{eqnarray*}
Therefore, we conclude the quadratic variational process of $\tilde{M}$ is
\begin{eqnarray*}
\langle \tilde{M}\rangle_t=\int_0^{t}\overline{(\partial_y\Phi_g+\sigma)(\partial_y\Phi_g+\sigma)^*(\tilde{X}_s,\mathscr{L}_{\tilde{X}_s})}ds,~ t\in[0,T].\label{quadratic3}
\end{eqnarray*}

Finally,  according to the  martingale representation theorem (cf.~\cite{SV}), there exists a  $n$-dimensional standard Brownian motions $\tilde{W}'$ under the probability measure $\tilde{\PP}$ such that \eref{Martingale identity} holds. The proof is complete.
 \hspace{\fill}$\Box$

\allowdisplaybreaks
\section{Appendix}

\subsection{Proof of Proposition \ref{P3.6}} \label{app1.1}

Since $K(x,\mu,y)$ satisfies the centering condition $\ref{A4}$, it is well-known that (\ref{SPE}) is a solution of Poisson equation (\ref{PE}), which satisfies $\Phi(\cdot,\mu,\cdot)\in C^{3,3}(\RR^n\times\RR^m;\RR^n)$ and $\Phi(x,\cdot,y)\in C^{(1,1)}(\mathscr{P}_2(\RR^n); \RR^n)$ under the conditions $\ref{A1}$-$\ref{A2}$. The proof of (\ref{E2}) and  (\ref{E1})  is similar to that of the recent works \cite[Proposition 4.1]{RSX1} and \cite[Proposition 3.1]{HLLS1}, thus we omit the detailed proof. We now mainly focus on the proof of (\ref{E4}).

 Firstly, in view of (\ref{SPE}), we deduce that for any $h\in \RR^n,k\in\RR^m$,
$$\partial_x\Phi(x,\mu,y)\cdot h=\int_0^{\infty}\EE\left[\partial_xK(x,\mu,Y^{x,\mu,y}_t)\cdot h+\partial_yK(x,\mu,Y^{x,\mu,y}_t)\cdot\big(\partial_xY^{x,\mu,y}_t\cdot h\big)\right]dt,$$
which implies that
\begin{eqnarray*}
\!\!\!\!\!\!\!\!&&\partial^2_{yx}\Phi(x,\mu,y)\cdot(h,k)
\nonumber\\
=\!\!\!\!\!\!\!\!&&\int_0^{\infty}\EE\big[\partial^2_{yx}K(x,\mu,Y^{x,\mu,y}_t)\cdot\big(h,(\partial_yY^{x,\mu,y}_t\cdot k)\big) +\partial^2_{yy}K(x,\mu,Y^{x,\mu,y}_t)\cdot\big(\partial_x Y^{x,\mu,y}_t\cdot h,\partial_y Y^{x,\mu,y}_t\cdot k\big)\nonumber\\
\!\!\!\!\!\!\!\!&&~~~~~~~~+\partial_y K(x,\mu,Y^{x,\mu,y}_t)\cdot\big(\partial^2_{yx}Y^{x,\mu,y}_t\cdot (h,k)\big)\big]dt,
\end{eqnarray*}
where $\partial_xY^{x,\mu,y}_t\cdot h$, $\partial_yY^{x,\mu,y}_t\cdot k$ and $\partial^2_{yx}Y^{x,\mu,y}_t\cdot(h,k)$ fulfill the following equations respectively,
\begin{equation*}
\left\{ \begin{aligned}
d\partial_{x}Y^{x,\mu,y}_t\cdot h=&~\big[\partial_{x}f(x,\mu,Y^{x,\mu,y}_t)\cdot h+\partial_{y}f(x,\mu,Y^{x,\mu,y}_t)\cdot \left(\partial_xY^{x,\mu,y}_t\cdot h\right)\big]dt\\
&+\left[\partial_{x}g(x,\mu,Y^{x,\mu,y}_t)\cdot h+\partial_yg(x,\mu,Y^{x,\mu,y}_t)\cdot\left(\partial_{x}Y^{x,\mu,y}_t \cdot h\right) \right]dW_t,\\
\partial_{x}Y^{x,\mu,y}_0\cdot h=&~0,
\end{aligned} \right.
\end{equation*}
and
\begin{equation*}
\left\{ \begin{aligned}
d\partial_{y}Y^{x,\mu,y}_t\cdot k=&~\big[\partial_{y}f(x,\mu,Y^{x,\mu,y}_t)\cdot \left(\partial_yY^{x,\mu,y}_t\cdot k\right)\big]dt+\left[\partial_yg(x,\mu,Y^{x,\mu,y}_t)\cdot\left(\partial_{y}Y^{x,\mu,y}_t \cdot k\right) \right]dW_t,\\
\partial_{y}Y^{x,\mu,y}_0\cdot k=&~k,
\end{aligned} \right.
\end{equation*}
and
\begin{equation*}
\left\{ \begin{aligned}
d\partial^2_{yx}Y^{x,\mu,y}_t\cdot(h,k)=&~\Big[\partial^2_{yx}f(x,\mu,Y^{x,\mu,y}_t)\cdot (h,(\partial_yY^{x,\mu,y}_t\cdot k))\\
&+\partial^2_{yy}f(x,\mu,Y^{x,\mu,y}_t)\cdot (\partial_yY^{x,\mu,y}_t\cdot k,\partial_xY^{x,\mu,y}_t\cdot h)\\
&+ \partial_yf(x,\mu,Y^{x,\mu,y}_t)\cdot\left(\partial^2_{yx}Y^{x,\mu,y}_t \cdot(h,k)\right) \Big]dt\\
&+\Big[\partial^2_{yx}g(x,\mu,Y^{x,\mu,y}_t)\cdot\big(h,(\partial_yY^{x,\mu,y}_t\cdot k)\big)\\
&+\partial^2_{yy}g(x,\mu,Y^{x,\mu,y}_t)\cdot\left(\partial_{y}Y^{x,\mu,y}_t \cdot k,\partial_{x}Y^{x,\mu,y}_t \cdot h\right)\\
&+\partial_{y}g(x,\mu,Y^{x,\mu,y}_t)\cdot\left(\partial^2_{yx}Y^{x,\mu,y}_t \cdot (h,k)\right)
 \Big]dW_t,\\
\partial^2_{yx}Y^{x,\mu,y}_0\cdot(h,k)=&~0.
\end{aligned} \right.
\end{equation*}
Under the conditions $\ref{A1}$-$\ref{A2}$, by a straightforward computation, it is easy to prove that
\begin{eqnarray}
&&\sup_{t\geq 0,x\in\RR^n,\mu\in\mathscr{P}_2(\RR^n),y\in\RR^m}\EE|\partial_xY^{x,\mu,y}_t\cdot h|^2\leq C|h|^2,\label{es19}\\
&&\sup_{x\in\RR^n,\mu\in\mathscr{P}_2(\RR^n),y\in\RR^m}\EE|\partial_yY^{x,\mu,y}_t\cdot k|^4\leq Ce^{-2\beta t}|k|^4,\label{9}\\
&&\sup_{x\in\RR^n,\mu\in\mathscr{P}_2(\RR^n),y\in\RR^m}\EE|\partial^2_{yx}Y^{x,\mu,y}_t\cdot(h,k)|^2\leq Ce^{-\beta t}|h|^2|k|^2,\label{10}
\end{eqnarray}
which together with the boundedness of $\|\partial_yK\|$, $\|\partial^2_{yx}K\|$ and $\|\partial^2_{yy}K\|$, it follows that
$$\sup_{x\in\RR^n,\mu\in\mathscr{P}_2(\RR^n),y\in\RR^m}|\partial_{yx}^2 \Phi(x,\mu,y)\cdot(h,k)|\leq C|h||k|.$$

Then the proof of (\ref{E4}) will be divided by the following three steps.\\

\textbf{Step 1:} For any $h,l\in \RR^n,k\in\RR^m$, we know that
\begin{eqnarray*}
\partial^2_{xyx}\Phi(x,\mu,y)\cdot(h,k,l)
=\!\!\!\!\!\!\!\!&&\int_0^{\infty}\EE\Big[\partial^3_{xyx}K(x,\mu,Y^{x,\mu,y}_t)\cdot\big(h,\partial_yY^{x,\mu,y}_t\cdot k,l\big)\nonumber\\
\!\!\!\!\!\!\!\!&&+\partial^3_{yyx}K(x,\mu,Y^{x,\mu,y}_t)\cdot\big(h,(\partial_yY^{x,\mu,y}_t\cdot k),(\partial_yY^{x,\mu,y}_t\cdot k)\big)
\nonumber\\
\!\!\!\!\!\!\!\!&&+\partial^2_{yx}K(x,\mu,Y^{x,\mu,y}_t)\cdot\big(h,\partial^2_{xy}Y^{x,\mu,y}_t\cdot (k,l)\big)
\nonumber\\
\!\!\!\!\!\!\!\!&&
+\partial^3_{xyy}K(x,\mu,Y^{x,\mu,y}_t)\cdot\big(\partial_yY^{x,\mu,y}_t\cdot k,\partial_yY^{x,\mu,y}_t\cdot k,h\big)
\nonumber\\
\!\!\!\!\!\!\!\!&&
+\partial^3_{yyy}K(x,\mu,Y^{x,\mu,y}_t)\cdot\big(\partial_xY^{x,\mu,y}_t\cdot h,\partial_yY^{x,\mu,y}_t\cdot k,\partial_yY^{x,\mu,y}_t\cdot k\big)
\nonumber\\
\!\!\!\!\!\!\!\!&&
+2\partial^2_{yy}K(x,\mu,Y^{x,\mu,y}_t)\cdot\big(\partial^2_{xy}Y^{x,\mu,y}_t\cdot(k,h),\partial_yY^{x,\mu,y}_t\cdot k\big)
\nonumber\\
\!\!\!\!\!\!\!\!&&+\partial^2_{xy}K(x,\mu,Y^{x,\mu,y}_t)\cdot\big(\partial^2_{yx}Y^{x,\mu,y}_t\cdot(h,k),l\big)
\nonumber\\
\!\!\!\!\!\!\!\!&&+\partial^2_{yy}K(x,\mu,Y^{x,\mu,y}_t)\cdot\big(\partial_xY^{x,\mu,y}_t\cdot h,\partial^2_{yx}Y^{x,\mu,y}_t\cdot(l,k)\big)
\nonumber\\
\!\!\!\!\!\!\!\!&&+\partial_{y}K(x,\mu,Y^{x,\mu,y}_t)\cdot\big(\partial^3_{xyx}Y^{x,\mu,y}_t\cdot (h,k,l)\big)
\Big]dt.
\end{eqnarray*}
Similar to (\ref{10}), it is easy to deduce that
\begin{eqnarray*}
\sup_{x\in\RR^n,\mu\in\mathscr{P}_2(\RR^n),y\in\RR^m}\EE|\partial^2_{xy}Y^{x,\mu,y}_t\cdot(k,h)|^2\leq Ce^{-\beta t}|h|^2|k|^2.
\end{eqnarray*}
We recall that $\partial^3_{xyx}Y^{x,\mu,y}_t\cdot(h,k,l)$ satisfies
\begin{equation*}
\left\{ \begin{aligned}
d\partial^2_{xyx}Y^{x,\mu,y}_t\cdot(h,k,l)=&~\Big[\partial^3_{xyx}f(x,\mu,Y^{x,\mu,y}_t)\cdot (h,\partial_yY^{x,\mu,y}_t\cdot k,l)\\
&+\partial^3_{yyx}f(x,\mu,Y^{x,\mu,y}_t)\cdot (h,\partial_xY^{x,\mu,y}_t\cdot l,\partial_yY^{x,\mu,y}_t\cdot k)\\
&+\partial^2_{yx}f(x,\mu,Y^{x,\mu,y}_t)\cdot (h,\partial^2_{xy}Y^{x,\mu,y}_t\cdot (k,l))\\
&+ \partial^3_{xyy}f(x,\mu,Y^{x,\mu,y}_t)\cdot(\partial_{y}Y^{x,\mu,y}_t\cdot k,\partial_{x}Y^{x,\mu,y}_t\cdot h,l) \\
&+\partial^3_{yyy}f(x,\mu,Y^{x,\mu,y}_t)\cdot(\partial_{x}Y^{x,\mu,y}_t\cdot h,\partial_{y}Y^{x,\mu,y}_t\cdot k,\partial_{x}Y^{x,\mu,y}_t\cdot l)\\
&+\partial^2_{yy}f(x,\mu,Y^{x,\mu,y}_t)\cdot (\partial^2_{xy}Y^{x,\mu,y}_t\cdot (k,h),\partial_{x}Y^{x,\mu,y}_t\cdot l)\\
&+\partial^2_{yy}f(x,\mu,Y^{x,\mu,y}_t)\cdot (\partial_{y}Y^{x,\mu,y}_t\cdot k,\partial^2_{xx}Y^{x,\mu,y}_t\cdot (h,l))\\
&+\partial^2_{xy}f(x,\mu,Y^{x,\mu,y}_t)\cdot (\partial^2_{yx}Y^{x,\mu,y}_t\cdot (h,k),l)\\
&+\partial_{y}f(x,\mu,Y^{x,\mu,y}_t)\cdot (\partial^3_{xyx}Y^{x,\mu,y}_t\cdot (h,k,l))\Big]dt+d\tilde{M}_t,\\
\partial^3_{xyx}Y^{x,\mu,y}_0\cdot(h,k,l)=&~0,
\end{aligned} \right.
\end{equation*}
where
\begin{eqnarray*}
d\tilde{M}_t:=\!\!\!\!\!\!\!\!&&\Big[\partial^3_{xyx}g(x,\mu,Y^{x,\mu,y}_t)\cdot (h,\partial_yY^{x,\mu,y}_t\cdot k,l)\\
\!\!\!\!\!\!\!\!&&+\partial^3_{yyx}g(x,\mu,Y^{x,\mu,y}_t)\cdot (h,\partial_xY^{x,\mu,y}_t\cdot l,\partial_yY^{x,\mu,y}_t\cdot k)\\
\!\!\!\!\!\!\!\!&&+\partial^2_{yx}g(x,\mu,Y^{x,\mu,y}_t)\cdot (h,\partial^2_{xy}Y^{x,\mu,y}_t\cdot (k,l))\\
\!\!\!\!\!\!\!\!&&+ \partial^3_{xyy}g(x,\mu,Y^{x,\mu,y}_t)\cdot(\partial_{y}Y^{x,\mu,y}_t\cdot k,\partial_{x}Y^{x,\mu,y}_t\cdot h,l) \\
\!\!\!\!\!\!\!\!&&+\partial^3_{yyy}g(x,\mu,Y^{x,\mu,y}_t)\cdot(\partial_{x}Y^{x,\mu,y}_t\cdot h,\partial_{y}Y^{x,\mu,y}_t\cdot k,\partial_{x}Y^{x,\mu,y}_t\cdot l)\\
\!\!\!\!\!\!\!\!&&+\partial^2_{yy}g(x,\mu,Y^{x,\mu,y}_t)\cdot (\partial^2_{xy}Y^{x,\mu,y}_t\cdot (k,h),\partial_{x}Y^{x,\mu,y}_t\cdot l)\\
\!\!\!\!\!\!\!\!&&+\partial^2_{yy}g(x,\mu,Y^{x,\mu,y}_t)\cdot (\partial_{y}Y^{x,\mu,y}_t\cdot k,\partial^2_{xx}Y^{x,\mu,y}_t\cdot (h,l))\\
\!\!\!\!\!\!\!\!&&+\partial^2_{xy}g(x,\mu,Y^{x,\mu,y}_t)\cdot (\partial^2_{yx}Y^{x,\mu,y}_t\cdot (h,k),l)\\
\!\!\!\!\!\!\!\!&&+\partial_{y}g(x,\mu,Y^{x,\mu,y}_t)\cdot (\partial^3_{xyx}Y^{x,\mu,y}_t\cdot (h,k,l))\Big]d W_t.
\end{eqnarray*}

Under the conditions $\ref{A1}$-$\ref{A2}$, by a straightforward computation, it is easy to prove that
\begin{eqnarray}\label{es20}
\sup_{x\in\RR^n,\mu\in\mathscr{P}_2(\RR^n),y\in\RR^m}\EE|\partial^3_{xyx}Y^{x,\mu,y}_t\cdot(h,k,l)|^2\leq Ce^{-\beta t}|h|^2|l|^2|k|^2,
\end{eqnarray}
then by (\ref{es19})-(\ref{es20}), we can infer that
 $$\sup_{x\in\RR^n,\mu\in\mathscr{P}_2(\RR^n),y\in\RR^m}|\partial_{xyx}^3 \Phi(x,\mu,y)\cdot(h,k,l)|\leq C|h||l||k|.$$

\textbf{Step 2:} Similarly, for any $h\in\RR^n,k,l\in\RR^m$, $\partial^3_{yyx}\Phi(x,\mu,y)\cdot(h,k,l)$ could be represented by
\begin{eqnarray*}
\partial^3_{yyx}\Phi(x,\mu,y)\cdot(h,k,l)
=\!\!\!\!\!\!\!\!&&\int_0^{\infty}\tilde \EE\Big[\partial^3_{yyx}K(x,\mu,Y^{x,\mu,y}_t)\cdot\big(h,\partial_yY^{x,\mu,y}_t\cdot k,\partial_yY^{x,\mu,y}_t\cdot l\big)
\nonumber\\
\!\!\!\!\!\!\!\!&&
+\partial^2_{yx}K(x,\mu,Y^{x,\mu,y}_t)\cdot\big(h,\partial^2_{yy}Y^{x,\mu,y}_t\cdot (k,l)\big)
\nonumber\\
\!\!\!\!\!\!\!\!&&
+\partial^3_{yyy}K(x,\mu,Y^{x,\mu,y}_t)\cdot\big(\partial_yY^{x,\mu,y}_t\cdot k,\partial_yY^{x,\mu,y}_t\cdot l,\partial_yY^{x,\mu,y}_t\cdot l\big)
\nonumber\\
\!\!\!\!\!\!\!\!&&+2\partial^2_{yy}K(x,\mu,Y^{x,\mu,y}_t)\cdot\big(\partial_yY^{x,\mu,y}_t\cdot k,\partial^2_{yy}Y^{x,\mu,y}_t\cdot (l,l)\big)
\nonumber\\
\!\!\!\!\!\!\!\!&&+\partial^2_{yy}K(x,\mu,Y^{x,\mu,y}_t)\cdot\big(\partial_yY^{x,\mu,y}_t\cdot k,\partial^2_{yx}Y^{x,\mu,y}_t\cdot (h,l)\big)
\nonumber\\
\!\!\!\!\!\!\!\!&&+\partial_{y}K(x,\mu,Y^{x,\mu,y}_t)\cdot\big(\partial^3_{yyx}Y^{x,\mu,y}_t\cdot (h,k,l)\big)
\Big]dt.
\end{eqnarray*}
Similar to (\ref{10}), it is easy to deduce that
\begin{eqnarray*}
\sup_{x\in\RR^n,\mu\in\mathscr{P}_2(\RR^n),y\in\RR^m}\EE|\partial^2_{yy}Y^{x,\mu,y}_t\cdot(k,l)|^2\leq Ce^{-\beta t}|k|^2|l|^2.
\end{eqnarray*}

We also recall that $\partial^3_{yyx}Y^{x,\mu,y}_t\cdot(h,k,l)$ fulfills the following equation
\begin{equation*}
\left\{ \begin{aligned}
d\partial^2_{yyx}Y^{x,\mu,y}_t\cdot(h,k,l)=&~\Big[\partial^3_{yyx}f(x,\mu,Y^{x,\mu,y}_t)\cdot (h,\partial_yY^{x,\mu,y}_t\cdot k,\partial_yY^{x,\mu,y}_t\cdot l)\\
&+\partial^2_{yx}f(x,\mu,Y^{x,\mu,y}_t)\cdot (h,\partial^2_{yy}Y^{x,\mu,y}_t\cdot (k,l))\\
&+\partial^3_{yyy}f(x,\mu,Y^{x,\mu,y}_t)\cdot (\partial_{y}Y^{x,\mu,y}_t\cdot k,\partial_{y}Y^{x,\mu,y}_t\cdot l,\partial_{x}Y^{x,\mu,y}_t\cdot h)\\
&+ \partial^2_{yy}f(x,\mu,Y^{x,\mu,y}_t)\cdot(\partial^2_{yy}Y^{x,\mu,y}_t\cdot (k,l),\partial_{x}Y^{x,\mu,y}_t\cdot h) \\
&+2\partial^2_{yy}f(x,\mu,Y^{x,\mu,y}_t)\cdot(\partial_{y}Y^{x,\mu,y}_t\cdot k,\partial^2_{yx}Y^{x,\mu,y}_t\cdot (h,l))\\
&+\partial_{y}f(x,\mu,Y^{x,\mu,y}_t)\cdot (\partial^3_{yyx}Y^{x,\mu,y}_t\cdot (h,k,l))\Big]dt+d\tilde{N}_t\\
\partial^3_{yyx}Y^{x,\mu,y}_0\cdot(h,k,l)=&~0,
\end{aligned} \right.
\end{equation*}
where
\begin{eqnarray*}
d\tilde{N}_t:=\!\!\!\!\!\!\!\!&&\Big[\partial^3_{yyx}g(x,\mu,Y^{x,\mu,y}_t)\cdot (h,\partial_yY^{x,\mu,y}_t\cdot k,\partial_yY^{x,\mu,y}_t\cdot l)\\
\!\!\!\!\!\!\!\!&&+\partial^2_{yx}g(x,\mu,Y^{x,\mu,y}_t)\cdot (h,\partial^2_{yy}Y^{x,\mu,y}_t\cdot (k,l))\\
\!\!\!\!\!\!\!\!&&+\partial^3_{yyy}g(x,\mu,Y^{x,\mu,y}_t)\cdot (\partial_{y}Y^{x,\mu,y}_t\cdot k,\partial_{y}Y^{x,\mu,y}_t\cdot l,\partial_{x}Y^{x,\mu,y}_t\cdot h)\\
\!\!\!\!\!\!\!\!&&+ \partial^2_{yy}g(x,\mu,Y^{x,\mu,y}_t)\cdot(\partial^2_{yy}Y^{x,\mu,y}_t\cdot (k,l),\partial_{x}Y^{x,\mu,y}_t\cdot h)  \\
\!\!\!\!\!\!\!\!&&+2\partial^2_{yy}g(x,\mu,Y^{x,\mu,y}_t)\cdot(\partial_{y}Y^{x,\mu,y}_t\cdot k,\partial^2_{yx}Y^{x,\mu,y}_t\cdot (h,l))\\
\!\!\!\!\!\!\!\!&&+\partial_{y}g(x,\mu,Y^{x,\mu,y}_t)\cdot (\partial^3_{yyx}Y^{x,\mu,y}_t\cdot (h,k,l))\Big]dW_t
\end{eqnarray*}

Under the conditions $\ref{A1}$-$\ref{A2}$, one can prove that
\begin{eqnarray}\label{es21}
\sup_{x\in\RR^n,\mu\in\mathscr{P}_2(\RR^n),y\in\RR^m}\tilde\EE|\partial^3_{yyx}Y^{x,\mu,y}_t\cdot(h,k,l)|^2\leq Ce^{-\beta t}|h|^2|k|^2|l|^2,
\end{eqnarray}
then by (\ref{es19})-(\ref{es21}), it turns out that
 $$\sup_{x\in\RR^n,\mu\in\mathscr{P}_2(\RR^n),y\in\RR^m}|\partial_{yyx}^3 \Phi(x,\mu,y)\cdot(h,k,l)|\leq C|h||k||l|.$$

\textbf{Step 3:} For the term $\partial_{\mu}\partial^2_{yx}\Phi(x,\mu,y)$, under the conditions $\ref{A1}$-$\ref{A2}$, one can easily obtain that for any $\mu_1,\mu_2\in \mathscr{P}_2(\RR^n)$,
\begin{eqnarray*}
&&\sup_{x\in\RR^n,y\in\RR^m}\EE|Y^{x,\mu_1,y}_t-Y^{x,\mu_2,y}_t|^2\leq C\mathbb{W}_{2}(\mu_1,\mu_2)^2,\\
&&\sup_{x\in\RR^n,y\in\RR^m}\EE\|\partial_{y}Y^{x,\mu_1,y}_t-\partial_{y}Y^{x,\mu_2,y}_t\|^2\leq Ce^{-\beta t}\mathbb{W}_{2}(\mu_1,\mu_2)^2,\\
&&\sup_{x\in\RR^n,y\in\RR^m}\EE\|\partial^2_{yx}Y^{x,\mu_1,y}_t-\partial^2_{yx}Y^{x,\mu_2,y}_t\|^2\leq Ce^{-\beta t}\mathbb{W}_{2}(\mu_1,\mu_2)^2,\label{es22}
\end{eqnarray*}
which together with the condition $\ref{A2}$ and estimates (\ref{9})-(\ref{10}) also implies that
\begin{eqnarray}\label{es33}
\!\!\!\!\!\!\!\!&&\|\partial^2_{yx}\Phi(x,\mu_1,y)-\partial^2_{yx}\Phi(x,\mu_2,y)\|
\nonumber\\
\leq \!\!\!\!\!\!\!\!&&\int_0^{\infty}\EE\Big[\|\partial^2_{yx}K(x,\mu_1,Y^{x,\mu_1,y}_t)-\partial^2_{yx}K(x,\mu_2,Y^{x,\mu_1,y}_t)\|\cdot\|\partial_yY^{x,\mu_1,y}_t\|\nonumber\\
\!\!\!\!\!\!\!\!&&+\|\partial^2_{yx}K(x,\mu_2,Y^{x,\mu_1,y}_t)-\partial^2_{yx}K(x,\mu_2,Y^{x,\mu_2,y}_t)\|\cdot\|\partial_yY^{x,\mu_1,y}_t\|\nonumber\\
\!\!\!\!\!\!\!\!&&+\|\partial^2_{yx}K(x,\mu_2,Y^{x,\mu_2,y}_t)\|\cdot\|\partial_yY^{x,\mu_1,y}_t-\partial_yY^{x,\mu_2,y}_t\|\nonumber\\
\!\!\!\!\!\!\!\!&&+\|\partial^2_{yy}K(x,\mu_1,Y^{x,\mu_1,y}_t)-\partial^2_{yy}K(x,\mu_2,Y^{x,\mu_1,y}_t)\|\cdot\|\partial_yY^{x,\mu_1,y}_t\|^2\nonumber\\
\!\!\!\!\!\!\!\!&&+\|\partial^2_{yy}K(x,\mu_2,Y^{x,\mu_1,y}_t)-\partial^2_{yy}K(x,\mu_2,Y^{x,\mu_2,y}_t)\|\cdot\|\partial_yY^{x,\mu_1,y}_t\|^2\nonumber\\
\!\!\!\!\!\!\!\!&&+2\|\partial^2_{yy}K(x,\mu_2,Y^{x,\mu_2,y}_t)\|\cdot\|\partial_yY^{x,\mu_1,y}_t\|\cdot\|\partial_yY^{x,\mu_1,y}_t-\partial_yY^{x,\mu_2,y}_t\|\nonumber\\
\!\!\!\!\!\!\!\!&&+\|\partial_yK(x,\mu_1,Y^{x,\mu_1,y}_t)-\partial_yK(x,\mu_2,Y^{x,\mu_1,y}_t)\|\cdot\|\partial^2_{yx}Y^{x,\mu_1,y}_t\|\nonumber\\
\!\!\!\!\!\!\!\!&&+\|\partial_yK(x,\mu_2,Y^{x,\mu_1,y}_t)-\partial_yK(x,\mu_2,Y^{x,\mu_2,y}_t)\|\cdot\|\partial^2_{yx}Y^{x,\mu_1,y}_t\|\nonumber\\
\!\!\!\!\!\!\!\!&&+\|\partial_yK(x,\mu_2,Y^{x,\mu_2,y}_t)\|\cdot\|\partial^2_{yx}Y^{x,\mu_1,y}_t-\partial^2_{yx}Y^{x,\mu_2,y}_t\|\Big]dt\nonumber\\
\leq \!\!\!\!\!\!\!\!&& C\mathbb{W}_{2}(\mu_1,\mu_2).
\end{eqnarray}
As a direct consequence of (\ref{es33}), we know
$$\sup_{x\in\RR^n,\mu\in\mathscr{P}_2,y\in\RR^m}\|\partial_{\mu}\partial^2_{yx} \Phi(x,\mu,y)\|_{L^2(\mu)}\leq C.$$

 The proof is complete.    \hspace{\fill}$\Box$

\subsection{Proof of \eref{Y0}-\eref{Y1}}\label{appendix 1}

We consider process $Z^\varepsilon_t:=Y^{\varepsilon}_{\varepsilon t}$ that solves the following equation
$$ Z^\varepsilon_t=f(X_{t\varepsilon}^\varepsilon,\mathscr{L}_{X_{t\varepsilon}^\varepsilon},Z^\varepsilon_t)dt+\sqrt{\varepsilon}h(X_{t\varepsilon}^\varepsilon,\mathscr{L}_{X_{t\varepsilon}^\varepsilon},Z^\varepsilon_t)dt+
g(X_{t\varepsilon}^\varepsilon,\mathscr{L}_{X_{t\varepsilon}^\varepsilon},Z^\varepsilon_t)d\tilde{W}_t,~\tilde{Y}^\varepsilon_0=\zeta,$$
where $\tilde{W}_t:=\frac{1}{\sqrt{\varepsilon}}W_{t\varepsilon}$ that coincides in law with $W_t$.

By It\^{o}'s formula, for any $p\geq 4$, we have
\begin{eqnarray}
|Z_{t}^{\varepsilon}|^{p}=\!\!\!\!\!\!\!\!&&|\zeta|^{p}+p\int_{0} ^{t}|Z_{s}^{\varepsilon}|^{p-2}\langle f(X_{s\ep}^{\ep},\mathscr{L}_{X_{s\ep}^{\ep}},Z_{s}^{\ep}),Z_{s}^{\ep}\rangle ds+p\int_{0} ^{t}|Z_{s}^{\varepsilon}|^{2p-2}\langle Z_{s}^{\ep}, g(X_{s\ep}^{\ep},\mathscr{L}_{X_{s\ep}^{\ep}},Z_{s}^{\ep})d\tilde{W}_s\rangle  \nonumber\\
&&+\frac{p}{2}\int_{0} ^{t}|Z_{s}^{\ep}|^{p-2}\|g(X_{s\ep}^{\ep}, \mathscr{L}_{X_{s\ep}^{\ep}}, Z_{s}^{\ep})\|^2ds+\frac{p(p-2)}{2}\int_{0} ^{t}|g^{*}(X^{\varepsilon}_{s\varepsilon},\mathscr{L}_{X^{\varepsilon}_{s\varepsilon}}, Z^{\varepsilon}_{s})Z^{\varepsilon}_{s}|^2 |Z^{\varepsilon}_{s}|^{p-4}ds\nonumber\\
&&+\sqrt{\ep}p\int_{0} ^{t}|Z_{s}^{\varepsilon}|^{p-2}\langle h(X_{s\ep}^{\ep},\mathscr{L}_{X_{s\ep}^{\ep}},Z_{s}^{\ep}),Z_{s}^{\ep}\rangle ds.\label{ITO}
\end{eqnarray}

In view of the conditions \eref{sm} and \eref{A6}, it is easy to deduce that there exist $C_p>0$ and $\beta\in(0,\gamma)$ such that for small enough $\ep>0$,
\begin{eqnarray}
 2\langle f(x,\mu,y), y\rangle+(p-1)\|g(x,\mu, y)\|^2+2\sqrt{\ep}\langle h(x,\mu,y), y\rangle\leq -\beta |y|^2+C_p,\label{RE3}
\end{eqnarray}
which implies that
\begin{eqnarray*}
\frac{d}{dt}\mathbb{E}|Z_{t}^{\varepsilon}|^{p} \leq-\frac{\beta p}{2}\mathbb{E}|Z_{t}^{\varepsilon}|^{p}+C_{p}.
\end{eqnarray*}
Applying the comparison theorem, we obtain
\begin{eqnarray*}
\mathbb{E}|Z_{t}^{\varepsilon}|^{p}\leq \EE|\zeta|^{p}e^{-\frac{\beta p t}{2}}+C_{\beta}\int^t_0 e^{-\frac{\beta p(t-s)}{2}}ds.
\end{eqnarray*}
Hence we get for any small enough $\ep>0$,
\begin{eqnarray}
\sup_{t\geq 0}\EE|Z^{\varepsilon}_{t}|^{p}\leq C_{p}(1+\EE|\zeta|^{p}),\label{Prior E}
\end{eqnarray}
which also implies the estimate (\ref{Y0}) holds.

As for (\ref{Y1}), noting that \eref{ITO} and \eref{RE3} imply that
\begin{eqnarray*}
|Z_{t}^{\varepsilon}|^{p}\leq\!\!\!\!\!\!\!\!&& |\zeta|^{p}+C_p t+C_p\left|\int_{0} ^{t}|Z_{s}^{\varepsilon}|^{p-2}\langle Z_{s}^{\ep}, g(X_{s\ep}^{\ep},\mathscr{L}_{X_{s\ep}^{\ep}},Z_{s}^{\ep})d\tilde{W}_s\rangle\right|.
\end{eqnarray*}
By Burkholder-Davis-Gundy's inequality and Young's inequality we get
\begin{eqnarray*}
\mathbb{E}\left[\sup_{t\in[0,T]}|Z_{t}^{\varepsilon}|^{p}\right]\leq\!\!\!\!\!\!\!\!&&\EE|\zeta|^{p}+C_p T +C_p\EE\left[\int_0^T|Z^\varepsilon_s|^{2p-2}(|Z^\varepsilon_s|^{2}+1)ds\right]^{\frac{1}{2}}
\nonumber\\
\leq\!\!\!\!\!\!\!\!&&\EE|\zeta|^{p}+C_p T+\frac{1}{2}\mathbb{E}\left[\sup_{t\in[0,T]}|Z_{t}^{\varepsilon}|^{p}\right]+C_p\int_0^T\left(\EE|Z^\varepsilon_s|^{p}+1\right)ds.
\end{eqnarray*}
Then by \eref{Prior E}, we obtain that for any $T\geq 1$,
\begin{eqnarray*}
\mathbb{E}\left[\sup_{t\in[0,T]}|Z_{t}^{\varepsilon}|^{p}\right]\leq C_p(1+\EE|\zeta|^{p})T.
\end{eqnarray*}
Hence, it follows that for any $p\in \mathbb{Z}_{+}, T>0$ and $\varepsilon$ small enough,
\begin{eqnarray*}
\mathbb{E}\left[\sup_{t\in[0,T]}|Y_{t}^{\varepsilon}|^{p}\right]=\mathbb{E}\left[\sup_{t\in\left[0,\frac{T}{\varepsilon}\right]}|Z_{t}^{\varepsilon}|^{p}\right]\leq\frac{C_p(1+\EE|\zeta|^{p})T}{\ep},
\end{eqnarray*}
which yields (\ref{Y1}) holds.  Hence the proof is complete.       \hspace{\fill}$\Box$

\subsection{Averaging Principle}
We consider a function $F:\RR^n\times\mathscr{P}_2(\RR^n)\times\RR^m\to\RR$ satisfying that for any $x_1,x_2\in\RR^n$, $\mu_1,\mu_2\in\mathscr{P}_2(\RR^n)$ and $y_1,y_2\in\RR^m$,
\begin{eqnarray}\label{es8}
|F(x_1,\mu_1,y_1)-F(x_2,\mu_2,y_2)|\leq\!\!\!\!\!\!\!\!&& C\left(1+|x_1|+\left[\mu_1(|\cdot|^2)\right]^{1/2}+|y_1|^2+|y_2|^2\right)|x_1-x_2|\nonumber\\
&&+C(1+|y_1|^2+|y_2|^2)\big(\mathbb{W}_{2}(\mu_1,\mu_2)+|y_1-y_2|\big).
\end{eqnarray}

\begin{lemma}\label{lem3}
Suppose that the assumptions $\ref{A1}$-$\ref{A4}$ hold and $F$ satisfies \eref{es8}. Then for any $T>0$, we have
\begin{eqnarray}
\lim_{k\to \infty}\hat\EE\left\{\sup_{t\in[0,T]}\left|\int_0^t F(\hat X_{s}^{\varepsilon_k},\mathscr{L}_{\hat X^{\varepsilon_k}_{s}},\hat Y_{s}^{\varepsilon_k})ds-\int_0^{t}\bar{F}(\hat{X}_s,\mathscr{L}_{\hat{X}_s})  ds\right|^2\right\}=0\label{Key3}
\end{eqnarray}
and
\begin{eqnarray}
\lim_{k\to \infty}\tilde\EE\left\{\sup_{t\in[0,T]}\left|\int_0^t F(\tilde X_{s}^{\varepsilon_k},\mathscr{L}_{\tilde X^{\varepsilon_k}_{s}},\tilde Y_{s}^{\varepsilon_k})ds-\int_0^{t}\bar{F}(\tilde{X}_s,\mathscr{L}_{\tilde{X}_s})  ds\right|^2\right\}=0,\label{Key2}
\end{eqnarray}
where $(\hat{X}^{\varepsilon_k},\hat{Y}^{\varepsilon_k})$ and $(\tilde{X}^{\varepsilon_k},\tilde{Y}^{\varepsilon_k})$ are the solutions of equations \eref{ChangeEquation} and \eref{ChangeEquation2} respectively.
\end{lemma}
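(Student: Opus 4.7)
The plan is to adapt Khasminskii's time discretization scheme to the measure-dependent setting. I focus on \eqref{Key3}; the proof of \eqref{Key2} is word-for-word identical after replacing the hatted objects with the tilded ones and \eqref{X22} with \eqref{tildeX1}. Fix a partition parameter $\Delta\in(0,1)$, to be optimized at the end, set $s(\Delta):=\lfloor s/\Delta\rfloor\Delta$, and decompose
\[
I^{\varepsilon_k}(t):=\int_0^t F(\hat X_s^{\varepsilon_k},\mathscr{L}_{\hat X_s^{\varepsilon_k}},\hat Y_s^{\varepsilon_k})\,ds-\int_0^t\bar F(\hat X_s,\mathscr{L}_{\hat X_s})\,ds=\sum_{i=1}^4 J_i(t),
\]
where $J_1$ freezes the first two arguments of $F$ on the $\Delta$-grid, $J_2$ replaces $F$ by $\bar F$ with the slow variables still frozen, $J_3$ undoes the freezing on $\bar F$, and $J_4:=\int_0^t[\bar F(\hat X_s^{\varepsilon_k},\mathscr{L}_{\hat X_s^{\varepsilon_k}})-\bar F(\hat X_s,\mathscr{L}_{\hat X_s})]\,ds$.

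The pieces $J_1,J_3,J_4$ are the routine ones. For $J_1$ and $J_3$, the Lipschitz-type bound \eqref{es8} together with the increment estimate \eqref{COX} and the moment bounds \eqref{Y0} and \eqref{X22} yields, after Cauchy--Schwarz, $\hat{\EE}\sup_{t\leq T}|J_i(t)|^2\leq C_T(\Delta+\Delta^2/\varepsilon_k)$ for $i=1,3$. For $J_4$, the Lipschitz regularity of $\bar F$ in $(x,\mu)$, inherited from that of $F$ together with the Lipschitz dependence of the invariant measure $\nu^{x,\mu}$ on $(x,\mu)$ (standard synchronous coupling on \eqref{FEQ2} using \eqref{sm}), combined with the a.s.\ convergence $\hat X^{\varepsilon_k}\to\hat X$ in $C([0,T];\RR^n)$ from \eqref{hatXC} and Vitali's theorem (with the uniform integrability supplied by \eqref{X22} at $p=6$), gives $\hat{\EE}\sup_{t\leq T}|J_4(t)|^2\to 0$.

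The genuine obstacle is $J_2$. On each subinterval $[j\Delta,(j+1)\Delta]$ I introduce an auxiliary fast process $\tilde Y^{\varepsilon_k,j}$ solving the second equation of \eqref{ChangeEquation} but with $(\hat X^{\varepsilon_k}_t,\mathscr{L}_{\hat X^{\varepsilon_k}_t})$ frozen to $(\hat X^{\varepsilon_k}_{j\Delta},\mathscr{L}_{\hat X^{\varepsilon_k}_{j\Delta}})$, driven by the same Brownian motion $\hat W^k$ and started at $\tilde Y^{\varepsilon_k,j}_{j\Delta}=\hat Y^{\varepsilon_k}_{j\Delta}$. A dissipativity--Gr\"onwall argument based on \eqref{sm}, combined with \eqref{COX}, gives $\sup_{s\in[j\Delta,(j+1)\Delta]}\hat{\EE}|\hat Y_s^{\varepsilon_k}-\tilde Y^{\varepsilon_k,j}_s|^2\leq C(\Delta+\Delta^2/\varepsilon_k)$, so substituting $\tilde Y^{\varepsilon_k,j}$ for $\hat Y^{\varepsilon_k}$ inside $J_2$ is cheap. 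After the time change $r=(s-j\Delta)/\varepsilon_k$, the conditional law of $\tilde Y^{\varepsilon_k,j}_{j\Delta+\varepsilon_k r}$ given $\mathscr{F}_{j\Delta}$ matches, up to an $O(\sqrt{\varepsilon_k})$ perturbation from the $h$-term, that of the frozen process $Y^{\hat X^{\varepsilon_k}_{j\Delta},\mathscr{L}_{\hat X^{\varepsilon_k}_{j\Delta}},\hat Y^{\varepsilon_k}_{j\Delta}}_r$ of \eqref{FEQ2}. The exponential ergodicity of the frozen semigroup, uniform in $(x,\mu)$ thanks to the $(x,\mu)$-free contraction rate $\gamma$ in \eqref{sm}, then yields
\[
\bigl|\EE F(x,\mu,Y^{x,\mu,y}_r)-\bar F(x,\mu)\bigr|\leq C(1+|y|^2)\,e^{-\beta r},
\]
and a covariance computation combined with the elementary inequality $|\sup_t\sum_j\xi_j|^2\leq(T/\Delta)\sum_j|\xi_j|^2$ summed over the $O(T/\Delta)$ subintervals delivers $\hat{\EE}\sup_{t\leq T}|J_2(t)|^2\leq C_T(\varepsilon_k/\Delta+\Delta+\Delta^2/\varepsilon_k+\sqrt{\varepsilon_k})$.

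Collecting all four estimates, $\hat{\EE}\sup_{t\leq T}|I^{\varepsilon_k}(t)|^2$ is controlled by $C_T(\Delta+\Delta^2/\varepsilon_k+\varepsilon_k/\Delta+\sqrt{\varepsilon_k})$ plus a vanishing contribution from $J_4$. Taking $\Delta=\varepsilon_k^{2/3}$ drives every term to zero as $k\to\infty$, proving \eqref{Key3}; \eqref{Key2} follows verbatim. The delicate step is the averaging analysis of $J_2$: it is where the measure dependence interacts with the ergodicity of the fast dynamics and where the $(x,\mu)$-uniformity of the dissipativity rate in \eqref{sm} is used in an essential way; all other steps reduce to Lipschitz estimates and moment bounds already established in Section~3.
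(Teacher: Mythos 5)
Your proof follows the same Khasminskii time-discretization strategy as the paper: decompose the error into a piece where the slow variables are frozen on the $\Delta$-grid and the fast variable is replaced by an auxiliary process, routine Lipschitz pieces for the freezing/unfreezing of $F$ and $\bar F$, a Vitali-convergence piece for $\bar F(\hat X^{\vare_k})\to\bar F(\hat X)$, and then handle the genuine averaging piece by conditioning on $\mathscr{F}_{j\Delta}$, a time change, and the exponential ergodicity of the frozen equation \eref{FEQ2}. The only cosmetic difference is that you restart your auxiliary fast process at $\hat Y^{\vare_k}_{j\Delta}$ on each subinterval whereas the paper uses a single auxiliary process $\bar Y^{\vare_k}$ run on all of $[0,T]$ from $\hat\zeta$ (so the $\hat Y\to\bar Y$ substitution is absorbed into $\mathscr{I}_1$ rather than your $J_2$); the resulting estimates and the final choice $\Delta=\vare_k^{2/3}$ are the same.
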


\begin{proof} Since the proof of \eref{Key3} and \eref{Key2} follow the same steps, we only prove \eref{Key3} here.

Under the conditions \eref{sm} and $\ref{A3}$, one can easily  prove that for any $y\in \RR^m$,
\begin{eqnarray}
\sup_{x\in \RR^n, \mu\in\mathscr{P}_2}\EE|Y_{t}^{x,\mu,y}|^6\leq e^{-\beta t }|y|^6+C,\quad \sup_{x\in \RR^n, \mu\in\mathscr{P}_2}\int_{\RR^m}|y|^6\nu^{x,\mu}(dy)<\infty,\label{F4.18}
\end{eqnarray}
where $Y_{t}^{x,\mu,y}$ is the solution of frozen equation (\ref{FEQ2}). Moreover, there exist $\beta\in (0,\gamma)$ and for any $x_1,x_2\in\RR^n,\mu_1,\mu_2\in\mathscr{P}_2(\RR^n)$ such that
\begin{eqnarray}
\EE\left |Y^{ x_1,\mu_1,y_1}_t-Y^{x_2,\mu_2,y_2}_t\right |^2\leq e^{-\beta t}|y_1-y_2|^2+C\left[|x_1-x_2|^2+\mathbb{W}_{2}(\mu_1,\mu_2)^2\right].\label{F4.19}
\end{eqnarray}
Using \eref{F4.18}, \eref{F4.19} and the definition of invariant measure,  for any $t\geq 0$,
\begin{eqnarray}
\!\!\!\!\!\!\!\!&&\left|\EE F(x,\mu,Y_t^{x,\mu,y})-\bar{F}(x,\mu)\right|\nonumber\\
=\!\!\!\!\!\!\!\!&&\left|\EE F( x, \mu, Y^{x,\mu,y}_t)-\int_{\RR^m}F(x,\mu, z)\nu^{x,\mu}(dz)\right|\nonumber\\
=\!\!\!\!\!\!\!\!&& \left|\int_{\RR^m}\left[\EE F(x, \mu,Y^{x,\mu,y}_t)-\tilde \EE F(x, \mu,Y^{x,\mu,z}_t)\right]\nu^{x,\mu}(dz)\right|\nonumber\\
\leq\!\!\!\!\!\!\!\!&& C\int_{\RR^m}\!\!\!\!\EE\left[(1+| Y^{x,\mu,y}_t|^2+|Y^{x,\mu,z}_t|^2)\left| Y^{x,\mu,y}_t-Y^{x,\mu,z}_t\right|\right]\nu^{x,\mu}(dz)\nonumber\\
\leq\!\!\!\!\!\!\!\!&& C e^{-\frac{\beta t}{2}}\int_{\RR^m}(1+|y|^2+|z|^2)|y-z|\nu^{x,\mu}(dz)\nonumber\\
\leq\!\!\!\!\!\!\!\!&&C e^{-\frac{\beta t}{2}}\left(1+|y|^3\right).\label{Ergodicity}
\end{eqnarray}
By \eref{Ergodicity}, it turns out  that for any $t>0$,
\begin{eqnarray*}
|\bar{F}(x_1,\mu_1)-\bar{F}(x_2,\mu_2)|=\!\!\!\!\!\!\!\!&&\left|\bar{F}(x_1,\mu_1)-\tilde{\EE}F(x_1,\mu_1,Y_t^{x_1,\mu_1,0})\right|\\
\!\!\!\!\!\!\!\!&&+\left|\tilde{\EE}F(x_2,\mu_2,Y_t^{x_2,\mu_2,0})-\bar{F}(x_2,\mu_2)\right|\\
\!\!\!\!\!\!\!\!&&+\left|\tilde{\EE}F(x_1,\mu_1,Y_t^{x_1,\mu_1,0})-\tilde{\EE}F(x_2,\mu_2,Y_t^{x_2,\mu_2,0})\right|\\
\leq\!\!\!\!\!\!\!\!&& Ce^{-\frac{\beta t}{2} }+C(1+|x_1|+\mu_1(|\cdot|^2)^{1/2})|x_1-x_2|+C\mathbb{W}_{2}(\mu_1,\mu_2).
\end{eqnarray*}
Consequently, letting $t\rightarrow \infty$,  $\bar{F}(x,\mu)$ satisfies
\begin{eqnarray*}
|\bar{F}(x_1,\mu_1)-\bar{F}(x_2,\mu_2)|\leq C(1+|x_1|+\mu_1(|\cdot|^2)^{1/2})|x_1-x_2|+C\mathbb{W}_{2}(\mu_1,\mu_2).\label{LF}
\end{eqnarray*}

The proof of Lemma \ref{lem3}  will be separated by the following two steps.

\vspace{1mm}
\textbf{Step 1:} In this step, we intend to prove \eref{Key3}.
Firstly, we consider an auxiliary process $\bar{Y}^{\varepsilon_k}_t$, which is defined by
\begin{equation*}\left\{\begin{array}{l}\label{30}
\displaystyle
d\bar{Y}_{t}^{\varepsilon_k}=\frac{1}{\varepsilon_k} f(\hat X_{t(\Delta)}^{\varepsilon_k},\mathscr{L}_{\hat X_{t(\Delta)}^{\varepsilon_k}},\bar{Y}_{t}^{\varepsilon_k})dt+\frac{1}{\sqrt{\varepsilon_k}} h(\hat X_{t(\Delta)}^{\varepsilon_k},\mathscr{L}_{\hat X_{t(\Delta)}^{\varepsilon_k}},\bar{Y}_{t}^{\varepsilon_k})dt\\
\quad\quad\quad\quad+\frac{1}{\sqrt{\varepsilon_k}}g( \hat X_{t(\Delta)}^{\varepsilon_k},\mathscr{L}_{\hat X_{t(\Delta)}^{\varepsilon_k}},\bar{Y}_{t}^{\varepsilon_k})d\hat W^{k}_t,\\
\bar{Y}_{0}^{\varepsilon_k}=\hat\zeta,
\end{array}\right.
\end{equation*}
where $t(\Delta):=[\frac{t}{\Delta}]\Delta$ and $[s]$ denotes the integer part of $s$.

By a straightforward computation, it is easy to check that for any $T>0$, there exists  a constant $C_{T}>0$ such that for any $k$ large enough,
\begin{eqnarray}
&&\sup_{t\geq 0}\hat\EE|\bar Y_{t}^{\varepsilon_k}|^{6}\leq C(1+\hat\EE|\hat \zeta|^{6});\label{F4.21}\\
&&\sup_{t\in[0,T]}\!\left(\hat \EE|\hat X_{t}^{\varepsilon_k}-\hat X_{t(\Delta)}^{\varepsilon_k}|^{6}+\mathbb{E}|\hat{Y}_{t}^{\varepsilon_k}-\bar{Y}_{t}^{\varepsilon_k}|^{6}\right)\leq \!C_{T}(1+\hat\EE|\hat\xi|^9+\hat\EE|\hat\zeta|^{18})\!\!\left(\Delta^{3}+\frac{\Delta^{6}}{\vare^3_k}\right).~\label{F4.22}
\end{eqnarray}

Note that we have the composition
\begin{eqnarray}\label{15}
\!\!\!\!\!\!\!\!&&\int_0^t F\left(\hat X_{s}^{\varepsilon_k},\mathscr{L}_{\hat X^{\varepsilon_k}_{s}},\hat Y_{s}^{\varepsilon_k}\right)ds-\int_0^{t}\bar{F}\left(\hat{X}_s,\mathscr{L}_{\hat{X}_s}\right)  ds
\nonumber \\
=\!\!\!\!\!\!\!\!&&\int_{0} ^{t}F\left(\hat X^{\varepsilon_k}_s,\mathscr{L}_{\hat X^{\varepsilon_k}_s},\hat Y^{\varepsilon_k}_s\right)
-F\left(\hat X^{\varepsilon_k}_{s(\Delta)},\mathscr{L}_{\hat X^{\varepsilon_k}_{s(\Delta)}},\bar{Y}^{\varepsilon_k}_s\right) ds
\nonumber \\
 \!\!\!\!\!\!\!\!&& + \int_{0} ^{t} \bar{F}\left(\hat X^{\varepsilon_k}_{s(\Delta)},\mathscr{L}_{\hat X^{\varepsilon_k}_{s(\Delta)}}\right)-
 \bar{F}\left(\hat X^{\varepsilon_k}_{s},\mathscr{L}_{\hat X^{\varepsilon_k}_s}\right) ds\nonumber \\
 \!\!\!\!\!\!\!\!&& + \int_{0} ^{t}
 \bar{F}\left(\hat X^{\varepsilon_k}_{s},\mathscr{L}_{\hat X_s^{\varepsilon_k}}\right)
 -\bar{F}\left(\hat{X}_{s},\mathscr{L}_{\hat{X}_{s}}\right) ds\nonumber \\
  \!\!\!\!\!\!\!\!&&+ \int_{0} ^{t} F\left(\hat X^{\varepsilon_k}_{s(\Delta)},\mathscr{L}_{\hat X^{\varepsilon_k}_{s(\Delta)}},\bar{Y}^{\varepsilon_k}_s\right)-\bar{F}\left(\hat X^{\varepsilon_k}_{s(\Delta)},\mathscr{L}_{\hat X^{\varepsilon_k}_{s(\Delta)}}\right) ds\nonumber \\
=:\!\!\!\!\!\!\!\!&&\mathscr{I}_1(t)+\mathscr{I}_2(t)+\mathscr{I}_3(t)+\mathscr{I}_4(t).
\end{eqnarray}
In what follows, we will estimate the terms $\mathscr{I}_i(t)$, $i=1,2,3,4$, respectively.

Due to (\ref{Y0}), (\ref{COX}), \eref{F4.21} and \eref{F4.22}, we have
\begin{eqnarray}
\!\!\!\!\!\!\!\!&&\hat\EE\left\{\sup_{t\in[0,T]}|\mathscr{I}_{1}(t)+\mathscr{I}_{2}(t)|^2\right\}\nonumber \\
\leq \!\!\!\!\!\!\!\!&&
C_T\hat\EE\left|\int_0^T\left[1+|\hat X_{s}^{\varepsilon_k}|+\left(\hat\EE|\hat X_{s}^{\varepsilon_k}|^2\right)^{1/2}+|\hat Y_{s}^{\varepsilon_k}|^2+|\bar{Y}_{s}^{\varepsilon_k}|^2\right]|\hat X_{s}^{\varepsilon_k}-\hat X_{s(\Delta)} ^{\varepsilon_k}|\right.\nonumber\\
&&\quad\quad\quad\left.+\left(1+|\hat Y_{s}^{\varepsilon_k}|^2+|\bar{Y}_{s}^{\varepsilon_k}|^2\right)\left(\mathbb{W}_{2}(\mathscr{L}_{\hat X^{\vare_k}_{s}},\mathscr{L}_{\hat X_{s(\Delta)} ^{\varepsilon_k}})+|\hat Y_{s}^{\varepsilon_k}-\bar{Y}_{s} ^{\varepsilon_k}|\right)ds\right|^2
\nonumber\\\leq\!\!\!\!\!\!\!\!&&
C_T\left[\hat\EE\int_0^T\!\!\!\!\left(1+|\hat X_{s}^{\varepsilon_k}|^3+|\hat Y_{s}^{\varepsilon_k}|^6+|\bar{Y}_{s}^{\varepsilon_k}|^6\right)ds\right]^{\frac{2}{3}}\nonumber\\
&&\quad\quad\cdot\left[\hat \EE\int_0^T|\hat X_{s}^{\varepsilon_k}-\hat X^{\varepsilon_k}_{s(\Delta)}|^6ds
+C_T\EE\int_0^T|\hat Y_{s}^{\varepsilon_k}-\bar{Y}_{s} ^{\varepsilon_k}|^6 ds\right]^{\frac{1}{3}}
\nonumber\\\leq\!\!\!\!\!\!\!\!&&
C_T\left(\Delta+\frac{\Delta^2}{\varepsilon_k}\right)\left(1+\hat\EE|\hat \xi|^{9}+\hat\EE|\hat\zeta|^{18}\right).
\label{p6}
\end{eqnarray}

For the term $\mathscr{I}_{3}(t)$. It follows that
\begin{eqnarray}
\hat \EE\left\{\sup_{t\in [0,T]}|\mathscr{I}_{3}(t)|^2\right\}
\leq\!\!\!\!\!\!\!\!&&C\hat{\EE}\Big|\int^T_0\left[1+|\hat{X}_{s}^{\varepsilon_k}|+\left(\hat{\EE}|\hat{X}_{s}^{\varepsilon_k}|^2\right)^{1/2}\right]\big[|\hat{X}_{s}^{\varepsilon_k}-\hat{X}_s|+\hat{\EE}|\hat{X}_{s}^{\varepsilon_k}-\hat{X}_s|^2\big]ds\Big|^2\nonumber\\
\leq\!\!\!\!\!\!\!\!&&C_T\left[\hat{\EE}\left(\sup_{s\in[0,T]}|\hat{X}_{s}^{\varepsilon_k}-\hat{X}_s|^4\right)\right]^{1/2}.
\end{eqnarray}

It remains to consider the term $\mathscr{I}_4(t)$. Note that

\begin{eqnarray}
 |\mathscr{I}_4(t)|^2=\!\!\!\!\!\!\!\!&&\left|\sum_{j=0}^{[t/\Delta]-1}\int_{j\Delta} ^{(j+1)\Delta} F\left(\hat X^{\varepsilon_k}_{s(\Delta)},\mathscr{L}_{\hat X^{\varepsilon_k}_{s(\Delta)}},\bar{Y}^{\varepsilon_k}_s\right)-\bar{F}\left(\hat X^{\varepsilon_k}_{s(\Delta)},\mathscr{L}_{\hat X^{\varepsilon_k}_{s(\Delta)}}\right) ds\right.\nonumber \\
 \!\!\!\!\!\!\!\!&& +\left.\int_{t(\Delta)} ^{t} F\left(\hat X^{\varepsilon_k}_{s(\Delta)},\mathscr{L}_{\hat X^{\varepsilon_k}_{s(\Delta)}},\bar{Y}^{\varepsilon_k}_s\right)-\bar{F}\left(\hat X^{\varepsilon_k}_{s(\Delta)},\mathscr{L}_{\hat X^{\varepsilon_k}_{s(\Delta)}}\right) ds\right|^2\nonumber \\
 \leq\!\!\!\!\!\!\!\!&&\frac{C_T}{\Delta}\sum_{j=0}^{[t/\Delta]-1}\left|\int_{j\Delta} ^{(j+1)\Delta} F\left(\hat X^{\varepsilon_k}_{s(\Delta)},\mathscr{L}_{\hat X^{\varepsilon_k}_{s(\Delta)}},\bar{Y}^{\varepsilon_k}_s\right)-\bar{F}\left(\hat X^{\varepsilon_k}_{s(\Delta)},\mathscr{L}_{\hat X^{\varepsilon_k}_{s(\Delta)}}\right)ds\right|^2\nonumber \\
 \!\!\!\!\!\!\!\!&& +2\left|\int_{t(\Delta)} ^{t} F\left(\hat X^{\varepsilon_k}_{s(\Delta)},\mathscr{L}_{\hat X^{\varepsilon_k}_{s(\Delta)}},\bar{Y}^{\varepsilon_k}_s\right)-\bar{F}\left(\hat X^{\varepsilon_k}_{s(\Delta)},\mathscr{L}_{\hat X^{\varepsilon_k}_{s(\Delta)}}\right) ds\right|^2\nonumber \\
=:\!\!\!\!\!\!\!\!&&\mathscr{V}_{1}(t)+\mathscr{V}_{2}(t).  \label{p12}
\end{eqnarray}
For the term $\mathscr{V}_{2}(t)$, owing to the estimates (\ref{X2}) and (\ref{F4.21}),
\begin{eqnarray}
\hat\EE\left\{\sup_{t\in [0,T]}\mathscr{V}_{2}(t)\right\}\leq \!\!\!\!\!\!\!\!&&
C_T\Delta\hat\EE\int_{0} ^{T} \big[1+|\hat X^{\varepsilon_k}_{s(\Delta)}|^6+\mathscr{L}_{\hat X^{\varepsilon_k}_{s(\Delta)}}(|\cdot|^6)+|\bar{Y}^{\varepsilon_k}_s|^6\big]ds.
\nonumber \\
 \leq\!\!\!\!\!\!\!\!&& C_T\Delta(1+{\hat\EE|\hat\xi|^{9}}+{\hat \EE|\hat\zeta|^{18}}).\label{p8}
\end{eqnarray}

Once the following estimate holds
\begin{eqnarray}
\hat \EE\left\{\sup_{t\in [0,T]}\mathscr{V}_{1}(t)\right\}
\leq C_T(1+{\hat\EE|\hat\xi|^{9}}+{\hat\EE|\hat\zeta|^{18}})\left(\varepsilon_k+\frac{\varepsilon^2_k}{\Delta^2}+\frac{\varepsilon_k}{\Delta}+\Delta\sqrt{\varepsilon_k}\right),\label{w3}
\end{eqnarray}
then collecting estimates (\ref{15})-(\ref{w3}) yields that
\begin{eqnarray*}
&&\hat\EE\left|\int_0^{t}F\left(\hat X_{s}^{\varepsilon_k},\mathscr{L}_{\hat X^{\varepsilon_k}_{s}},\hat Y_{s}^{\varepsilon_k}\right)ds-\int_0^{t}\bar{F}\left(\hat{X}_s,\mathscr{L}_{\hat{X}_s}\right)  ds\right|^2
\nonumber \\
\leq\!\!\!\!\!\!\!\!&&
 C_T(1+{\hat\EE|\hat\xi|^{9}}+{\hat\EE|\hat\zeta|^{18}})\left(\frac{\Delta^2}{\varepsilon_k}+\Delta+\varepsilon_k+\frac{\varepsilon_k}{\Delta}+\frac{\varepsilon_k^2}{\Delta^2}\right)
 +C_T\left[\hat{\EE}\left(\sup_{t\in[0,T]}|\hat X_{t}^{\varepsilon_k}-\hat{X}_t|^4\right)\right]^{1/2}.
\end{eqnarray*}
Taking $\Delta=\varepsilon_k^{\frac{2}{3}}$, it is easy to see \eref{Key3} holds by \eref{vita1}.

\vspace{3mm}
\textbf{Step 2:} In this step, we intend to prove \eref{w3}. Note that
\begin{eqnarray}
\hat\EE\left\{\sup_{t\in [0,T]}\mathscr{V}_{1}(t)\right\}\leq\!\!\!\!\!\!\!\!&&\frac{C_T}{\Delta}\hat\EE\sum_{j=0}^{[T/\Delta]-1}\left|\int_{j\Delta} ^{(j+1)\Delta} F\left(\hat X^{\varepsilon_k}_{j\Delta},\mathscr{L}_{\hat X^{\varepsilon_k}_{j\Delta}},\bar{Y}^{\varepsilon_k}_s\right)-\bar{F}\left(\hat X^{\varepsilon_k}_{j\Delta},\mathscr{L}_{\hat X^{\varepsilon_k}_{j\Delta}}\right) ds\right|^2\nonumber \\
\leq\!\!\!\!\!\!\!\!&&\frac{C_T}{\Delta^2}\max_{0\leq j\leq[T/\Delta]-1}\EE\left|\int_{j\Delta} ^{(j+1)\Delta} F\left(\hat X^{\varepsilon_k}_{j\Delta},\mathscr{L}_{\hat X^{\varepsilon_k}_{j\Delta}},\bar{Y}^{\varepsilon_k}_s\right)-\bar{F}\left(\hat X^{\varepsilon_k}_{j\Delta},\mathscr{L}_{\hat X^{\varepsilon_k}_{j\Delta}}\right)ds\right|^2\nonumber \\
\leq\!\!\!\!\!\!\!\!&&\frac{C_T\varepsilon^2_k}{\Delta^2}\max_{0\leq j\leq[T/\Delta]-1}\hat \EE\left|\int_{0} ^{\frac{\Delta}{\varepsilon_k}} F\left(\hat X^{\varepsilon_k}_{j\Delta},\mathscr{L}_{\hat X^{\varepsilon_k}_{j\Delta}},\bar{Y}^{\varepsilon_k}_{s\varepsilon_k+j\Delta}\right)-\bar{F}\left(\hat X^{\varepsilon_k}_{j\Delta},\mathscr{L}_{\hat X^{\varepsilon_k}_{j\Delta}}\right)ds\right|^2
\nonumber \\
\leq\!\!\!\!\!\!\!\!&&
\frac{C_T\varepsilon^2_k}{\Delta^2}\max_{0\leq j\leq[T/\Delta]-1}\left[\int_{0} ^{\frac{\Delta}{\varepsilon_k}} \int_{r} ^{\frac{\Delta}{\varepsilon_k}}\Psi_j(s,r)dsdr \right],\label{w1}
\end{eqnarray}
where for any $0\leq r\leq s\leq \frac{\Delta}{\varepsilon}$,
\begin{eqnarray*}
\Psi_j(s,r):=\!\!\!\!\!\!\!\!&&\hat \EE\left[\left\langle F\left(\hat X^{\varepsilon_k}_{j\Delta},\mathscr{L}_{\hat X^{\varepsilon_k}_{j\Delta}},\bar{Y}^{\varepsilon_k}_{s\varepsilon_k+j\Delta}\right)-\bar{F}\left(\hat X^{\varepsilon_k}_{j\Delta},\mathscr{L}_{\hat X^{\varepsilon_k}_{j\Delta}}\right),
\right.\right.\nonumber \\
\!\!\!\!\!\!\!\!&&\left.\left.~~~~~~F\left(\hat X^{\varepsilon_k}_{j\Delta},\mathscr{L}_{\hat X^{\varepsilon_k}_{j\Delta}},\bar{Y}^{\varepsilon_k}_{r\varepsilon_k+j\Delta}\right)-\bar{F}\left(\hat X^{\varepsilon_k}_{j\Delta},\mathscr{L}_{\hat X^{\varepsilon_k}_{j\Delta}}\right)\right\rangle\right].
\end{eqnarray*}

For any $s\geq 0$, $\mu\in\mathscr{P}_2$, and any $\hat{\mathscr{F}}_s$-measurable $\RR^n$-valued random variable $X$ and $\RR^m$-valued random variable $Y$, we consider the following equation
\begin{eqnarray}\label{p14}
\left\{ \begin{aligned}
&d\tilde{Y}_{t}=\frac{1}{\varepsilon_k}f(X,\mu,\tilde{Y}_{t})dt+\frac{1}{\sqrt{\varepsilon_k}}h(X,\mu,\tilde{Y}_{t})dt+\frac{1}{\sqrt{\varepsilon_k}}g(X,\mu,\tilde{Y}_t)d\hat {W}_{t}^{k},\\
&\tilde{Y}_s=Y.
\end{aligned} \right.
\end{eqnarray}
Note that equation \eref{p14} has a unique solution denoted by $\tilde{Y}_t^{\varepsilon_k,s,X,\mu,Y}$.
By the definition of $\bar{Y}_t^{\varepsilon_k}$, for any $j\in \mathbb{N}$, we have
 $$\bar{Y}_t^{\varepsilon_k}=\tilde{Y}_t^{\varepsilon_k,j\Delta,\hat X^{\varepsilon_k}_{j\Delta},
 \mathscr{L}_{\hat X^{\varepsilon_k}_{j\Delta}},\bar{Y}^{\varepsilon_k}_{j\Delta}},~~
 t\in[j\Delta,(j+1)\Delta].$$
Then it leads to
\begin{eqnarray*}
\Psi_j(s,r)=\!\!\!\!\!\!\!\!&&\hat\EE\left[\left\langle F\left(\hat X^{\varepsilon_k}_{j\Delta},\mathscr{L}_{\hat X^{\varepsilon_k}_{j\Delta}},\tilde{Y}^{\varepsilon_k,j\Delta,\hat X_{j\Delta}^{\varepsilon_k},\mathscr{L}_{\hat X^{\varepsilon_k}_{j\Delta}},\bar{Y}^{\varepsilon_k}_{j\Delta}}_{s\varepsilon_k+j\Delta}\right)
-\bar{F}\left(\hat X^{\varepsilon_k}_{j\Delta},\mathscr{L}_{\hat X^{\varepsilon_k}_{j\Delta}}\right),
\right.\right.\nonumber \\
\!\!\!\!\!\!\!\!&&\left.\left.~~~~~~F\left(\hat X^{\varepsilon_k}_{j\Delta},\mathscr{L}_{\hat X^{\varepsilon_k}_{j\Delta}},\tilde{Y}^{\varepsilon_k,j\Delta, \hat X_{j\Delta}^{\varepsilon_k},\mathscr{L}_{\hat X^{\varepsilon_k}_{j\Delta}},\bar{Y}^{\varepsilon_k}_{j\Delta}}_{r\varepsilon_k+j\Delta}\right)-\bar{F}\left(\hat X^{\varepsilon_k}_{j\Delta},\mathscr{L}_{\hat X^{\varepsilon_k}_{j\Delta}}\right)\right\rangle\right].
\end{eqnarray*}
We point out  that for any fixed $x\in \RR^n$ and
$y\in \RR^m$, $\tilde{Y}_{s\varepsilon+j\Delta}^{\varepsilon,j\Delta,x,\mu,y}$ is independent of $\hat{\mathscr{F}}_{j\Delta}$, and $\hat X_{j\Delta}^{\varepsilon_{k}}$, $\bar{Y}_{j\Delta}^{\varepsilon_k}$ are $\hat{\mathscr{F}}_{j\Delta}$-measurable, therefore
\begin{eqnarray*}
\Psi_j(s,r)=\!\!\!\!\!\!\!\!&&\hat\EE\Big\{\hat \EE\Big[\big\langle F\big(\hat X^{\varepsilon_k}_{j\Delta},\mathscr{L}_{\hat X^{\varepsilon_k}_{j\Delta}},\tilde{Y}^{\varepsilon_k,j\Delta,X_{j\Delta}^{\varepsilon_k},\mathscr{L}_{\hat X^{\varepsilon_k}_{j\Delta}},\bar{Y}^{\varepsilon_k}_{j\Delta}}_{s\varepsilon_k+j\Delta}\big)-\bar{F}\big(\hat X^{\varepsilon_k}_{j\Delta},\mathscr{L}_{\hat X^{\varepsilon_k}_{j\Delta}}\big),
\nonumber\\
\!\!\!\!\!\!\!\!&&~~~~~~F\big(\hat X^{\varepsilon_k}_{j\Delta},\mathscr{L}_{\hat X^{\varepsilon_k}_{j\Delta}},\tilde{Y}^{\varepsilon_k,j\Delta,\hat X_{j\Delta}^{\varepsilon_k},\mathscr{L}_{\hat X^{\varepsilon_k}_{j\Delta}},\bar{Y}^{\varepsilon_k}_{j\Delta}}_{r\varepsilon_k+j\Delta}\big)-\bar{F}\big(\hat X^{\varepsilon_k}_{j\Delta},
\mathscr{L}_{\hat X^{\varepsilon_k}_{j\Delta}}\big)\big\rangle\big|\hat{\mathscr{F}}_{j\Delta}\Big] \Big\}
\nonumber\\
=\!\!\!\!\!\!\!\!&&
\hat\EE\Big\{\hat\EE\Big[\big\langle F\big(x,\mathscr{L}_{\hat X^{\varepsilon_k}_{j\Delta}},\tilde{Y}^{\varepsilon_k,j\Delta,x,\mathscr{L}_{\hat X^{\varepsilon_k}_{j\Delta}},y}_{s\varepsilon_k+j\Delta}\big)-\bar{F}\big(x,\mathscr{L}_{\hat X^{\varepsilon_k}_{j\Delta}}\big),
\nonumber\\
\!\!\!\!\!\!\!\!&&~~~~~~F\big(x,\mathscr{L}_{\hat X^{\varepsilon_k}_{j\Delta}},\tilde{Y}^{\varepsilon_k,j\Delta,x,\mathscr{L}_{\hat X^{\varepsilon_k}_{j\Delta}},y}_{r\varepsilon_k+j\Delta}\big)-\bar{F}\big(x,
\mathscr{L}_{X^{\varepsilon_k}_{j\Delta}}\big)\big\rangle\Big]\Big|_{\{(x,y)=(\hat X^{\varepsilon_k}_{j\Delta},\bar{Y}^{\varepsilon_k}_{j\Delta})\}}\Big\}.
\end{eqnarray*}
Recall the definition of $\{\tilde{Y}_{s\varepsilon_k+j\Delta}^{\varepsilon_k,j\Delta,x,\mu,y}\}_{s\geq0}$, we can deduce that
\begin{eqnarray}
\tilde{Y}_{s\varepsilon_k+j\Delta}^{\varepsilon_k,j\Delta,x,\mu,y}
=\!\!\!\!\!\!\!\!&&y+\frac{1}{\varepsilon_k}\int_{j\Delta}^{s\varepsilon_k+j\Delta}f(x,\mu,\tilde{Y}_{r}^{\varepsilon_k,j\Delta,x,\mu,y})dr
+\frac{1}{\sqrt{\varepsilon_k}}\int_{j\Delta}^{s\varepsilon_k+j\Delta}h(x,\mu,\tilde{Y}_{r}^{\varepsilon_k,j\Delta,x,\mu,y})dr\nonumber\\
&&+\frac{1}{\sqrt{\varepsilon_k}}\int_{j\Delta}^{s\varepsilon_k+j\Delta}g(x,\mu,\tilde{Y}_{r}^{\varepsilon_k,j\Delta,x,\mu,y})d\hat{W}_{r}^{k}
\nonumber \\
=\!\!\!\!\!\!\!\!&&
y+\frac{1}{\varepsilon_k}\int_{0}^{s\varepsilon_k}f(x,\mu,\tilde{Y}_{r+j\Delta}^{\varepsilon_k,j\Delta,x,\mu,y})dr
+\frac{1}{\sqrt{\varepsilon_k}}\int_{0}^{s\varepsilon_k+j\Delta}h(x,\mu,\tilde{Y}_{r}^{\varepsilon_k,j\Delta,x,\mu,y})dr\nonumber\\
&&+\frac{1}{\sqrt{\varepsilon_k}}\int_{0}^{s\varepsilon_k}g(x,\mu,\tilde{Y}_{r+j\Delta}^{\varepsilon_k,j\Delta,x,\mu,y})d\hat{W}_{r}^{k,j\Delta}
\nonumber \\
=\!\!\!\!\!\!\!\!&&
y+\int_{0}^{s}f(x,\mu,\tilde{Y}_{r\varepsilon_k+j\Delta}^{\varepsilon_k,j\Delta,x,\mu,y})dr
+\sqrt{\varepsilon_k}\int_{0}^{s}h(x,\mu,\tilde{Y}_{r\varepsilon_k+j\Delta}^{\varepsilon_k,j\Delta,x,\mu,y})dr\nonumber\\
&&+\int_{0}^{s}g(x,\mu,\tilde{Y}_{r\varepsilon_k
+j\Delta}^{\varepsilon_k,j\Delta,x,\mu,y})dW_{r}^{k,j\Delta},\label{p20}
\end{eqnarray}
where $$\Big\{\hat W_{r}^{k,j\Delta}:=\hat W_{r+j\Delta}^{k}-\hat W_{j\Delta}^{k}\Big\}_{r\geq0}~~\text{and}~~\Big\{W_{r}^{k,j\Delta}:=\frac{1}{\sqrt{\varepsilon_k}}\hat{W}_{r\varepsilon_k}^{k,j\Delta}\Big\}_{r\geq0}.$$

Consider the following frozen equation:
\begin{eqnarray}
Y_{s}^{x,\mu,y,\varepsilon_k}=\!\!\!\!\!\!\!\!&&
y+\int_{0}^{s}f(x,\mu,{Y}_{r}^{x,\mu,y,\varepsilon_k})dr+\sqrt{\varepsilon_k}\int_{0}^{s}h(x,\mu,{Y}_{r}^{x,\mu,y,\varepsilon_k})dr\nonumber\\
&&+\int_{0}^{s}g(x,\mu,{Y}_{r}^{x,\mu,y,\varepsilon_k})dW_{r}.\label{p21}
\end{eqnarray}
Hence the uniqueness of the solution of \eref{p20} and \eref{p21} implies that $\left\{\tilde{Y}_{s\varepsilon_k+j\Delta}^{\varepsilon_k,j\Delta,x,\mu,y}\right\}_{0\leq s\leq\frac{\Delta}{\varepsilon_k}}$ coincides in distribution with $\left\{{Y}_{s}^{x,\mu,y,\varepsilon_k}\right\}_{0\leq s\leq\frac{\Delta}{\varepsilon_k}}.$

Then it is easy to see for $k$ large enough,
$$\sup_{t\geq 0}\hat\EE|Y_{t}^{x,\mu,y,\varepsilon_k}|^{6}\leq C(1+\hat\EE|\hat \zeta|^{6}).$$
By the Markov property, it follows that

\begin{eqnarray}
\Psi_j(s,r)=\!\!\!\!\!\!\!\!&&
\hat\EE\left\{\EE\Big[\Big\langle F\Big(x,\mathscr{L}_{\hat X^{\varepsilon_k}_{j\Delta}},{Y}^{x,\mathscr{L}_{\hat X^{\varepsilon_k}_{j\Delta}},y,\varepsilon_k}_{s}\Big)-\bar{F}\Big(x,\mathscr{L}_{\hat X^{\varepsilon_k}_{j\Delta}}\Big)\right.,
\nonumber \\
\!\!\!\!\!\!\!\!&&~~~~~~
\left.F\Big(x,\mathscr{L}_{\hat X^{\varepsilon_k}_{j\Delta}},
{Y}^{x,\mathscr{L}_{\hat X^{\varepsilon_k}_{j\Delta}},y,\varepsilon_k}_{r}\Big)-\bar{F}\Big(x,
\mathscr{L}_{\hat X^{\varepsilon_k}_{j\Delta}}\Big)\Big\rangle\Big]\Big|_{\{(x,y)=(\hat X^{\varepsilon_k}_{j\Delta},\bar{Y}^{\varepsilon_k}_{j\Delta})\}}\right\}
\nonumber \\
=\!\!\!\!\!\!\!\!&&
\hat\EE\left\{\EE\Big[\Big\langle \EE\Big[F\Big(x,\mathscr{L}_{\hat X^{\varepsilon_k}_{j\Delta}},{Y}^{x,\mathscr{L}_{\hat X^{\varepsilon_k}_{j\Delta}},y,\varepsilon_k}_{s}\Big)-\bar{F}\Big(x,\mathscr{L}_{\hat X^{\varepsilon_k}_{j\Delta}}\Big)\big|\mathscr{F}_{r}\Big]\right.,
\nonumber \\
\!\!\!\!\!\!\!\!&&~~~~~~\left.F\Big(x,\mathscr{L}_{\hat X^{\varepsilon_k}_{j\Delta}},
{Y}^{x,\mathscr{L}_{\hat X^{\varepsilon_k}_{j\Delta}},y,\varepsilon_k}_{r}\Big)-\bar{F}\Big(x,
\mathscr{L}_{\hat X^{\varepsilon_k}_{j\Delta}}\Big)\Big\rangle\Big]\Big|_{\{(x,y)=(\hat X^{\varepsilon_k}_{j\Delta},\bar{Y}^{\varepsilon_k}_{j\Delta})\}}\right\}
\nonumber \\
=\!\!\!\!\!\!\!\!&&
\hat \EE\left\{\EE\Big[\Big\langle \EE\Big[F\Big(x,\mathscr{L}_{\hat X^{\varepsilon_k}_{j\Delta}},{Y}^{x,\mathscr{L}_{\hat X^{\varepsilon_k}_{j\Delta}},z,\varepsilon_k}_{s-r}\Big)-\bar{F}\Big(x,\mathscr{L}_{\hat X^{\varepsilon_k}_{j\Delta}}\Big)\Big]\Big|_{z=Y^{x,\mathscr{L}_{\hat X^{\varepsilon_k}_{j\Delta}},y,\varepsilon_k}_r}\right.
\nonumber \\
\!\!\!\!\!\!\!\!&&~~~~~~\left.F\Big(x,\mathscr{L}_{\hat X^{\varepsilon_k}_{j\Delta}},
{Y}^{x,\mathscr{L}_{\hat X^{\varepsilon_k}_{j\Delta}},y,\varepsilon_k}_{r}\Big)-\bar{F}\Big(x,
\mathscr{L}_{\hat X^{\varepsilon_k}_{j\Delta}}\Big)\Big\rangle\Big]\Big|_{\{(x,y)=(\hat X^{\varepsilon_k}_{j\Delta},\bar{Y}^{\varepsilon_k}_{j\Delta})\}}\right\}.
\nonumber
\end{eqnarray}
Note that by a straightforward computation, it is easy to prove for $k$ large enough,
$$
|\EE F(x,\mu, Y^{x,\mu,y})-\EE F(x,\mu, Y^{x,\mu,y,\varepsilon_k})|\leq  C\sqrt{\varepsilon_k},
$$
which combines with \eref{F4.18} and \eref{Ergodicity},  we obtain
\begin{eqnarray}
\Psi_j(s,r)
\leq\!\!\!\!\!\!\!\!&&
C_T\hat\EE\left\{\EE\left[1+|x|^6+\hat \EE|\hat X^{\varepsilon_k}_{j\Delta}|^6+|{Y}^{x,\mathscr{L}_{\hat X^{\varepsilon_k}_{j\Delta}},y,\varepsilon_k}_r|^6\right]\Big|_{\{(x,y)=(\hat X^{\varepsilon_k}_{j\Delta},\bar{Y}^{\varepsilon_k}_{j\Delta})\}}e^{-\frac{(s-r)\beta}{2}}\right\}
\nonumber \\
&&+C_T\sqrt{\varepsilon_k}\hat\EE\left\{\EE\left[1+|x|^6+\hat \EE|\hat X^{\varepsilon_k}_{j\Delta}|^6+|{Y}^{x,\mathscr{L}_{\hat X^{\varepsilon_k}_{j\Delta}},y,\varepsilon_k}_r|^6\right]\Big|_{\{(x,y)=(\hat X^{\varepsilon_k}_{j\Delta},\bar{Y}^{\varepsilon_k}_{j\Delta})\}}\right\}\nonumber\\
\leq\!\!\!\!\!\!\!\!&& C_T(1+\hat\EE|\hat\xi|^{9}+\hat\EE|\hat \zeta|^{18})\left(e^{-\frac{(s-r)\beta}{2}}+\sqrt{\varepsilon_k}\right).\label{w2}
\end{eqnarray}
By \eref{w1} and \eref{w2}, it is easy to see \eref{w3} holds. The proof is complete.
\end{proof}

\vspace{5mm}
\noindent\textbf{Acknowledgements} { W. Hong is supported by NSFC (No.~12171354);  S. Li is supported by NSFC (No.~12001247) and
 NSF of Jiangsu Province (No.~BK20201019);  X. Sun is supported by NSFC (No.~11931004,
12090011), the QingLan Project of Jiangsu Province and the Priority Academic Program Development of Jiangsu Higher Education Institutions.}


\begin{thebibliography}{2}


\bibitem{An00} A. Ansari, \emph{Mean first passage time solution of the Smoluchowski equation: Application of relaxation dynamics in myoglobin}, Journal of Chemical Physics 112 (2000), 2516-2522.


\bibitem{BK04} V. Bakhtin, and Y. Kifer, \emph{Diffusion approximation for slow motion in fully coupled averaging}, Probab.
Theory Related Fields 129 (2004), 157-181.

\bibitem{BLP} A. Bensoussan, J. L. Lions, and G. Papanicolau, {\it Asymptotic Analysis for Periodic Structures}, North Holland, Amsterdam, 1978.

\bibitem{BS} Z. Bezemek, and K. Spiliopoulos, \emph{Rate of homogenization for fully-coupled McKean-Vlasov SDEs}, arXiv:2202.07753.
\bibitem{B0} P. Billingsley, \emph{Convergence of Probability Measuraes}, second edition,  John Wiley and Sons Inc., New York, 1999.



\bibitem{B1} V.I. Bogachev,  \emph{Measure Theory, Volume I}, Springer-Verlag, 2007.

\bibitem{BM} N.N. Bogoliubov, and Y.A. Mitropolsky, {\it Asymptotic methods in the theory of Non-linear Oscillations},
Gordon and Breach Science Publishers, New York, 1961.



\bibitem{BLPR}  R. Buckdahn, J. Li, S. Peng, and C. Rainer, \emph{Mean-field stochastic differential equations and associated PDEs}, Ann. Probab. 45 (2) (2017), 824-878.









\bibitem{C} P. Cardaliaguet, \emph{Notes on mean field games} (from P.L. Lions' lectures at Coll\'ege de France). https://www.ceremade.dauphine.fr/cardalia/MFG100629.pdf (2012).
\bibitem{C1} S. Cerrai, \emph{A Khasminskii type averaging principle for stochastic reaction-diffusion equations},   Ann. Appl. Probab. 19 (2009), 899-948.



















\bibitem{FW2021} M. Freidlin, A. Wentzell, \emph{Diffusion approximation for noise-induced evolution of first integrals in multifrequency systems.} J. Stat. Phys. 182 (3) (2021), Paper No. 45, 24 pp.

\bibitem{FU1} T. Funaki, \emph{The diffusion approximation of the Boltzmann equation of Maxwellian molecules},  Publ. Res. Inst. Math. Sci. 19 (1983), 841--886.

\bibitem{FU} T. Funaki, \emph{The diffusion approximation of the spatially homogeneous Boltzmann equation}, Duke Math. J. 52 (1985),  1--23.





\bibitem{GP2} P. Gao, \emph{ Averaging principle for multiscale stochastic Klein-Gordon-Heat system},  J. Nonlinear Sci. 29(4) (2019) 1701-1759.

\bibitem{GM} J. Garnier, L. Mertz, \emph{A control variate method driven by diffusion approximation}, Comm. Pure Appl. Math. 75(3) (2022)  455-492.






\bibitem{HP08} M. Hairer, and E. Pardoux, \emph{Homogenization of periodic linear degenerate PDEs}, J. Funct. Anal. 255 (2008) 2462-2487.

\bibitem{HP1} M. Hairer, and E. Pardoux, \emph{Fluctuations around a homogenised semilinear random PDE},
Arch. Ration. Mech. Anal. 239 (2021), 151--217.







\bibitem{HLL2} W. Hong, S. Li, and W. Liu, \emph{ Freidlin-Wentzell type large deviation principle for multiscale locally monotone SPDEs},  SIAM J. Math. Anal. 53 (2021), 6517-6561.

\bibitem{HLL4} W. Hong, S. Li, and W. Liu, \emph{Strong convergence rates in averaging principle for slow-fast McKean-Vlasov SPDEs}, J. Differential Equations 316 (2022), 94-135.

\bibitem{HLLS1} W. Hong, S. Li, W. Liu and X. Sun, \emph{Central limit type theorem and large deviations for multi-scale McKean-Vlasov SDEs}, arXiv:2112.08203.







\bibitem{KS1} I. Karatzas, and S.E. Shreve, \emph{Brownian Motion and Stochastic Calculus}, Springer, 2nd Ed., 1991.

\bibitem{K1} R.Z. Khasminskii, \emph{ On an averging principle for It\^{o} stochastic differential equations}, {\it Kibernetica} (4) (1968), 260-279.

\bibitem{KY05} R.Z. Khasminskii, and G.Yin, \emph{Limit behavior of two-time-scale diffusions revisited.}
J. Differential Equations 212 (2005), no. 1, 85-113.





\bibitem{LM} J. Li, and H. Min, \emph{Weak solutions of mean-field stochastic
differential equations}, Stoch. Anal. Appl. 35 (2017),  542--568.
\bibitem{LWX} Y. Li, F. Wu, and L. Xie, \emph{Poisson equation on Wasserstein space and diffusion approximations for McKean-Vlasov equation}, arXiv:2203.12796.


\bibitem{LR1} W. Liu, and M. R\"{o}ckner, {\it Stochastic Partial Differential Equations: An Introduction}, Universitext, Springer, 2015.







\bibitem{MR83}  M.R. Maxey, and J.J. Riley, \emph{Equation of motion for a small rigid sphere in a nonuniform flow}, Phys. Fluids 26 (4) (1983) 883-889.






\bibitem{M1} H.P. McKean, \emph{Propagation of chaos for a class of nonlinear parabolic equations}, Lecture Series in
Differential Equations, 7 (1967), 41-57.

\bibitem{Ne67} E. Nelson, {\it Dynamical Theories of Brownian Motion}, Princeton University Press, Princeton, 1967.





\bibitem{PSV} G. C. Papanicolaou, D. Stroock, and S.R.S. Varadhan,
\emph{Martingale approach to some limit theorems.} Papers from the Duke Turbulence Conference, Duke Univ., Durham, N.C., 1976.

\bibitem{PV1} E. Pardoux, and A.Y. Veretennikov, \emph{On the Poisson equation and diffusion approximation. I.}  Ann. Prob. 29 (3) (2001) 1061-1085.

\bibitem{PV2} E. Pardoux and A.Y. Veretennikov, \emph{On the Poisson equation and diffusion approximation. 2.} Ann. Prob. 31 (3) (2003) 1166-1192.


\bibitem{PS08} G.A. Pavliotis and A.M. Stuart, {\it Multiscale Methods: Averaging and Homogenization}, Texts Appl. Math., vol. 53, Springer, New York, 2008.





\bibitem{PIX}  B. Pei, Y. Inahama, and Y. Xu, \emph{Averaging principle for fast-slow system driven by mixed fractional Brownian rough path},  J. Differential Equations 301 (2021), 202--235.


\bibitem{RSX1}  M. R\"{o}ckner, X. Sun, and Y. Xie, \emph{Strong convergence order for slow-fast McKean-Vlasov stochastic differential equations}, Ann. Inst. Henri Poincare Probab. Stat. 57 (2021), 4745-4777.
\bibitem{RX21} M. R\"{o}ckner, and L. Xie, \emph{Diffusion approximation for fully coupled stochastic differential equations.} Ann. Probab. 49 (2021), no. 3, 1205-1236.






\bibitem{SXW} G. Shen, J. Xiang, and J.-L. Wu, \emph{Averaging principle for distribution dependent stochastic differential equations driven by fractional Brownian motion and standard Brownian motion}, J. Differential Equations 321 (2022), 381--414.
\bibitem{SV} D. W. Stroock, and S.R.S. Varadhan, \emph{Multidimensional Diffusion Processes}, Springer, New
York, 1979.
\bibitem {SXX}  X. Sun, L. Xie, and Y. Xie, \emph{Strong and weak convergence rates for slow-fast stochastic differential equations driven by $\alpha$-stable process}, Bernoulli 28 (2022), 343-369.


\bibitem{S91} A.-S. Sznitman,
Topics in propagation of chaos. \'{E}cole d'\'{E}t\'{e} de Probabilit\'{e}s de Saint-Flour XIX-1989, 165-251,
Lecture Notes in Math., 1464, Springer, Berlin, 1991.



\bibitem{WFY} F.-Y. Wang, \emph{Distribution dependent SDEs for Landau type equations}, Stochastic Process. Appl. 128 (2018), 595-621.


\bibitem{WR15} W. Wang, and A.J. Roberts, \emph{
Diffusion approximation for self-similarity of stochastic advection in Burgers' equation.}
Comm. Math. Phys. 333 (2015), no. 3, 1287--1316.

\bibitem{XY21} L. Xie, and L. Yang, \emph{Diffusion approximation for multi-scale stochastic reaction-diffusion equations.} J. Differential Equations 300 (2021), 155-184.
\bibitem{XLLM} J. Xu, J. Liu, J. Liu, and Y. Miao, \emph{Strong averaging principle for two-time-scale stochastic McKean-Vlasov equations}, Appl. Math. Optim. 84 (2021),  S837-S867.

 \bibitem{YW19} Y. Lv and W. Wang, \emph{Diffusion approximation for nonlinear evolutionary equations with large interaction and fast boundary fluctuation.} J. Differential Equations 266 (2019), no. 6, 3310-3327.

\vspace{10mm}


\end{thebibliography}
\end{document}